\newcommand{\bi}{\begin{itemize}}
\newcommand{\ei}{\end{itemize}}
\newcommand{\ben}{\begin{enumerate}}
\newcommand{\een}{\end{enumerate}}
\newcommand{\be}{\begin{equation}}
\newcommand{\ee}{\end{equation}}
\newcommand{\bea}{\begin{eqnarray}} 
\newcommand{\eea}{\end{eqnarray}}
\newcommand{\ba}{\begin{align}}
\newcommand{\ea}{\end{align}}
\newcommand{\bse}{\begin{subequations}} 
\newcommand{\ese}{\end{subequations}}
\newcommand{\bc}{\begin{center}}
\newcommand{\ec}{\end{center}}
\newcommand{\bfi}{\begin{figure}}
\newcommand{\efi}{\end{figure}}
\newcommand{\ca}[2]{\caption{#1 \label{#2}}}
\newcommand{\ig}[2]{\includegraphics[#1]{#2}}
\newcommand{\tbox}[1]{{\mbox{\tiny \rm #1}}}
\newcommand{\mbf}[1]{{\mathbf #1}}
\newcommand{\pO}{{\partial\Omega}}
\newcommand{\half}{\mbox{\small $\frac{1}{2}$}}
\newcommand{\uu}{\mbf{u}}                       
\newcommand{\xx}{\mbf{x}}                       
\newcommand{\yy}{\mbf{y}}                       
\newcommand{\rr}{\mbf{r}}                       
\newcommand{\ww}{\mbf{w}}           
\newcommand{\ff}{\mbf{f}}           
\newcommand{\vv}{\mbf{v}}           
\newcommand{\nn}{n}  
\newcommand{\Tv}{\mbf{T}}                      
\newcommand{\nx}{\nn^\xx}                       
\newcommand{\ny}{\nn^\yy}                       
\newcommand{\dx}{d_\xx}                          
\newcommand{\dy}{d_\yy}                          
\newcommand{\vt}[2]{\left[\begin{array}{l}#1\\#2\end{array}\right]} 
\newcommand{\mt}[4]{\left[\begin{array}{ll}#1&#2\\#3&#4\end{array}\right]} 
\newcommand{\RR}{\mathbb{R}}
\newcommand{\ex}{\mbf{e}_1}                      
\newcommand{\ey}{\mbf{e}_2}
\newcommand{\cb}{{\cal B}}                     
\newcommand{\pcb}{{\partial \cal B}}                     
\newcommand{\cbo}{{\cb\backslash \overline{\Omega}}}  
\newcommand{\emach}{{\varepsilon_\tbox{mach}}}
\newcommand{\bigO}{{\cal O}}
\newcommand{\btau}{\bm{\tau}}
\newcommand{\ttau}{\tilde{\bm{\tau}}}
\newcommand{\rp}{{R_p}}               
\newcommand{\tr}{^\tbox{T}}                
\newcommand{\Gp}{G^\tbox{p}}              
\newcommand{\Gt}{G^\tbox{t}}              
\newcommand{\Dp}{D^\tbox{p}}              
\newcommand{\Dt}{D^\tbox{t}}              
\newcommand{\hhh}[1]{{\mbf{h}^{(#1)}}}         
\newcommand{\rrr}[1]{{\mbf{r}^{(#1)}}}         
\newcommand{\www}[1]{{\mbf{w}^{(#1)}}}         
\DeclareMathOperator{\Nul}{Nul}
\DeclareMathOperator{\Ran}{Ran}
\newtheorem{thm}{Theorem}[section]  
\newtheorem{lem}[thm]{Lemma}
\newtheorem{pro}[thm]{Proposition}
\newtheorem{rmk}[thm]{Remark}
\begin{document}
\title{A unified integral equation scheme for doubly-periodic Laplace and Stokes boundary value problems in two dimensions}

\author{Alex Barnett\thanks{Department of Mathematics, Dartmouth College,
and Center for Computational Biology, Flatiron Institute}
\and
Gary Marple\thanks{Department of Mathematics, University of Michigan}
\and
Shravan Veerapaneni\footnotemark[2]   
\and
Lin Zhao\thanks{INTECH, Princeton, NJ}
}

\date{\today}
\maketitle
\begin{abstract}
We present a spectrally-accurate scheme to
turn a boundary integral formulation for an elliptic PDE
on a single unit cell geometry
into one for the fully periodic problem.
Applications include computing the effective permeability 
of composite media (homogenization), and microfluidic chip design.
%
Our basic idea is to exploit a small least squares solve to
apply periodicity without ever handling 
periodic Green's functions.
%
We exhibit fast solvers for the two-dimensional (2D)
doubly-periodic Neumann Laplace problem (flow around insulators),
and Stokes non-slip fluid flow problem, that for inclusions with smooth boundaries achieve
12-digit accuracy, and can handle thousands of inclusions per unit cell.
%
We split the infinite sum over the lattice of images
into a directly-summed ``near'' part plus a small number of auxiliary sources
which represent the (smooth) remaining ``far'' contribution.
Applying physical boundary conditions on the unit cell walls
gives an expanded linear system,
which, after a rank-1 or rank-3 correction and a Schur complement,
leaves a {\em well-conditioned}
square system which can be solved
iteratively 
using fast multipole acceleration plus a low-rank term.
We are rather explicit about the consistency and nullspaces
of both the continuous and discretized problems.
%
The scheme is simple
(no lattice sums,
Ewald methods, nor particle meshes are required),
allows adaptivity,
and is essentially dimension- and PDE-independent,
so would generalize without fuss to 3D and to other non-oscillatory elliptic
problems such as elastostatics.
We incorporate 
recently developed spectral 
quadratures that accurately handle close-to-touching 
geometries.
We include many numerical examples, and provide a software implementation.
\end{abstract}

\section{Introduction}
\label{s:intro}

\bfi[t] 
\bc\ig{width=6.3in}{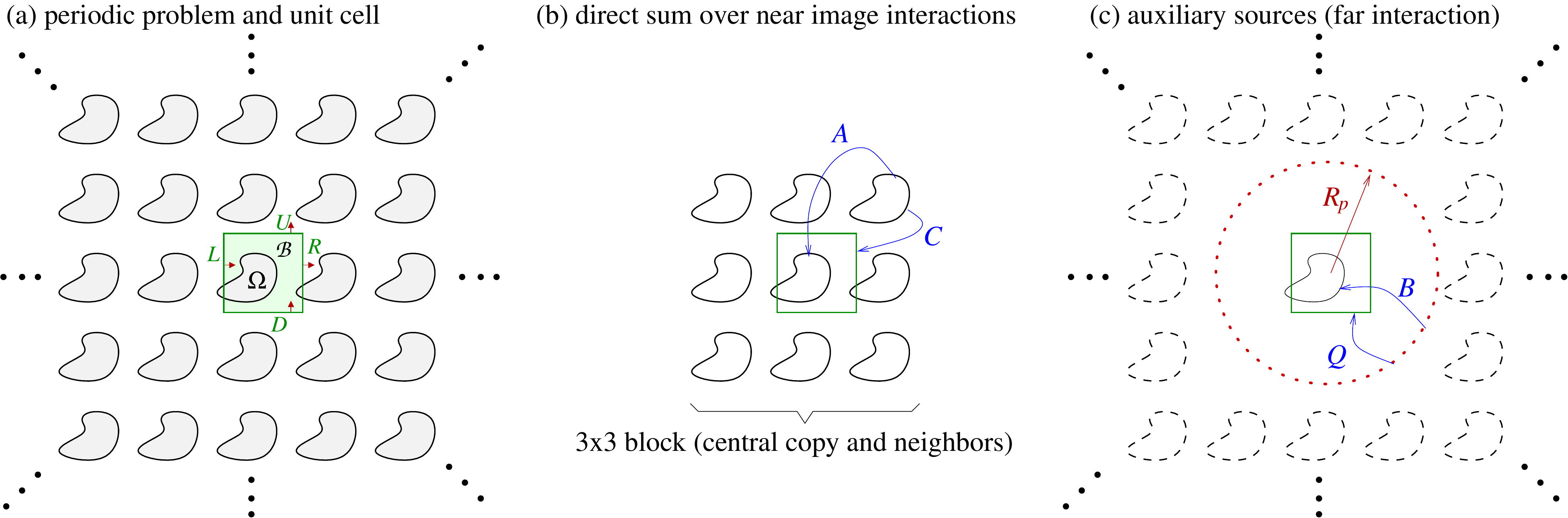}\ec
\ca{2D periodic problem in the case of a single inclusion
$\Omega$ with boundary $\pO$. (a) Periodic BVP in $\RR^2$, showing a possible unit cell ``box'' $\cb$ and its four walls $L$, $R$, $D$, $U$, and senses of wall normals.
(b) Directly-summed ``near'' copies of $\pO$.
(c) Circle of auxiliary sources (red dots)
which represent the field in $\cal B$ due to the
infinite punctured lattice of ``far'' copies (dashed curves).
In (b)--(c) blue arrows indicate the action (source to target) of
the four matrix blocks $A$, $B$, $C$, and $Q$.
}{f:geom}
\efi

Periodic boundary value problems (BVPs) arise frequently in
engineering and the sciences,
either when modeling the behavior of solid or fluid media
with true periodic geometry,
or when applying artificial periodic boundary conditions
to simulate a representative domain of a random medium or particulate flow
(often called a super-cell or representative volume element simulation).
The macroscopic response of a given microscopic periodic composite medium
can often be summarized by an effective material property
(e.g.\ a conductivity or permeability tensor),
a fact placed on a rigorous footing by the 
field of {\em homogenization} (for a review see \cite{cioranescu}).
However, with the exception of layered media that vary in only one dimension,
finding this 
tensor requires
the numerical solution of ``cell problems'' \cite{miltonbook,pavliotis},
namely BVPs in which the solution is
periodic up to some additive constant expressing
the macroscopic driving.
Application areas span all of the major elliptic PDEs, including
the Laplace equation (thermal/electrical conductivity, electrostatics and magnetostatics of composites \cite{helsingdiscs,helsingzeta2,Moura94,cazeaux});
the Stokes equations (porous flow in periodic solids
\cite{drummond,larsonhigdon2,wangfibers,Krop04},
sedimentation \cite{klint}, mobility \cite{mobility}, transport by
cilia carpets \cite{carpets},
vesicle dynamics in microfluidic flows \cite{periodicp});
elastostatics (microstructured periodic or
random composites \cite{guedes,helsingelasto,elastohomog,Otani06});
and the Helmholtz and Maxwell equations (phononic and
photonic crystals, bandgap materials \cite{jobook,otani08}).
In this work we focus on the first two (non-oscillatory) PDEs above,
noting that the methods that we present also apply with minor changes
to the oscillatory Helmholtz and Maxwell cases,
at least up to moderate frequencies \cite{qplp,mlqp,acper}.



The accurate solution of periodic BVPs has remained a challenging problem
at the forefront of analytical and computational progress
for well over a century \cite{latticesums}.
Modern simulations may demand large numbers of objects 
per unit cell, with arbitrary geometries, that
at high volume fractions may approach arbitrarily close to each other
\cite{sanganimo}.
In such regimes asymptotic methods based upon expansion
in the inclusion size do not apply \cite{drummond,Krop04}. 
Polydisperse suspensions or nonsmooth geometries may require
spatially adaptive discretizations. 
In microfluidics, high aspect ratio and/or skew unit cells are needed,
for instance in the optimal design of particle sorters.
Furthermore,
to obtain accurate average material properties for random media
or suspensions via the Monte Carlo method, thousands of
simulation runs may be needed \cite{helsingzeta2}.
A variety of numerical methods are used (see \cite[Sec.~2.8]{miltonbook}),
including particular solutions
(starting in 1892 with Rayleigh's method for cylinders and spheres
\cite{rayleighsums}, and, more recently, in Stokes flow \cite{sanganiacrivos}),
eigenfunction expansions \cite{wangfibers},
lattice Boltzmann and finite differencing \cite{Ladd94},
and finite element methods \cite{guedes,elastohomog}.
However, for general geometries,
it becomes very hard to achieve 
high-order accuracy with any of the above
apart from finite elements, and the
cost of meshing renders the latter unattractive when moving
geometries are involved.
Integral equation methods \cite{Ladyzhenskaya,pozrikidis,LIE,Liu09book}
are natural,
since the PDE has piecewise-constant coefficients, and are very popular
\cite{larsonhigdon2,helsingelasto,Moura94,Krop04,Otani06,cazeaux,klint,mobility}.
By using potential theory to represent the solution field
in terms of the convolution of a free-space Green's function with an
unknown ``density'' function $\tau$ living only on
material boundaries, 
one vastly reduces the number of discretized unknowns.
The 
linear system resulting by applying boundary conditions
takes the form
\be
A\tau \; =\; f~,
\label{A}
\ee
where the $N\times N$ matrix $A$ is the discretization of an integral operator.
The latter is often of Fredholm second-kind, hence $A$ remains well-conditioned,
independent of $N$, so that iterative methods converge rapidly.
Further advantages include their high-order or spectral accuracy
(e.g.\ via Nystr\"om quadratures),
the ease of adaptivity on the boundary, and the existence of fast algorithms
to apply $A$ with optimal $\bigO(N)$ cost, such as the fast multipole method
(FMM) \cite{lapFMM}.

\afterpage{
\bfi[p]
\bc\ig{width=5.5in}{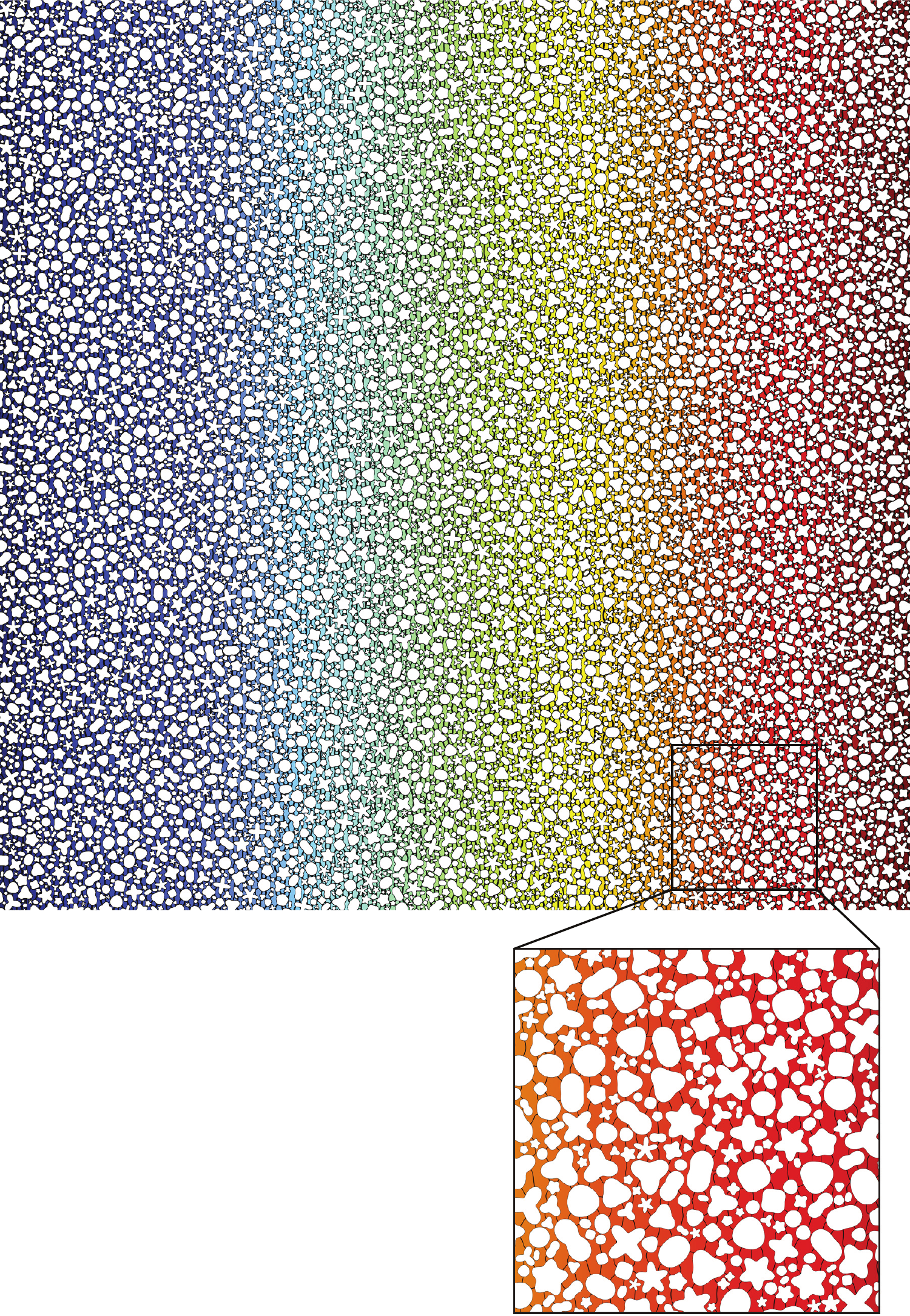}\ec
\ca{Solution of the Laplace equation in a doubly-periodic domain with Neumann boundary conditions on each of $K=10^4$ inclusions, driven by
specified potential drop $\mbf{p}=(1,0)$ (Example 2).
A single unit cell is shown, with the solution
$u$ indicated by a color scale and contours.
The inset shows geometric detail.
Each inclusion has $N_k=512$ discretization nodes on its boundary, resulting in 5.12 million degrees of freedom. The problem took about 28 hours
to solve using two eight-core 2.60 GHz Intel Xeon E5-2670 processors and 64 GB of RAM.
}{f:lapcandy}
\efi
\clearpage
}

\afterpage{
\bfi[p]
\bc\ig{width=6.3in}{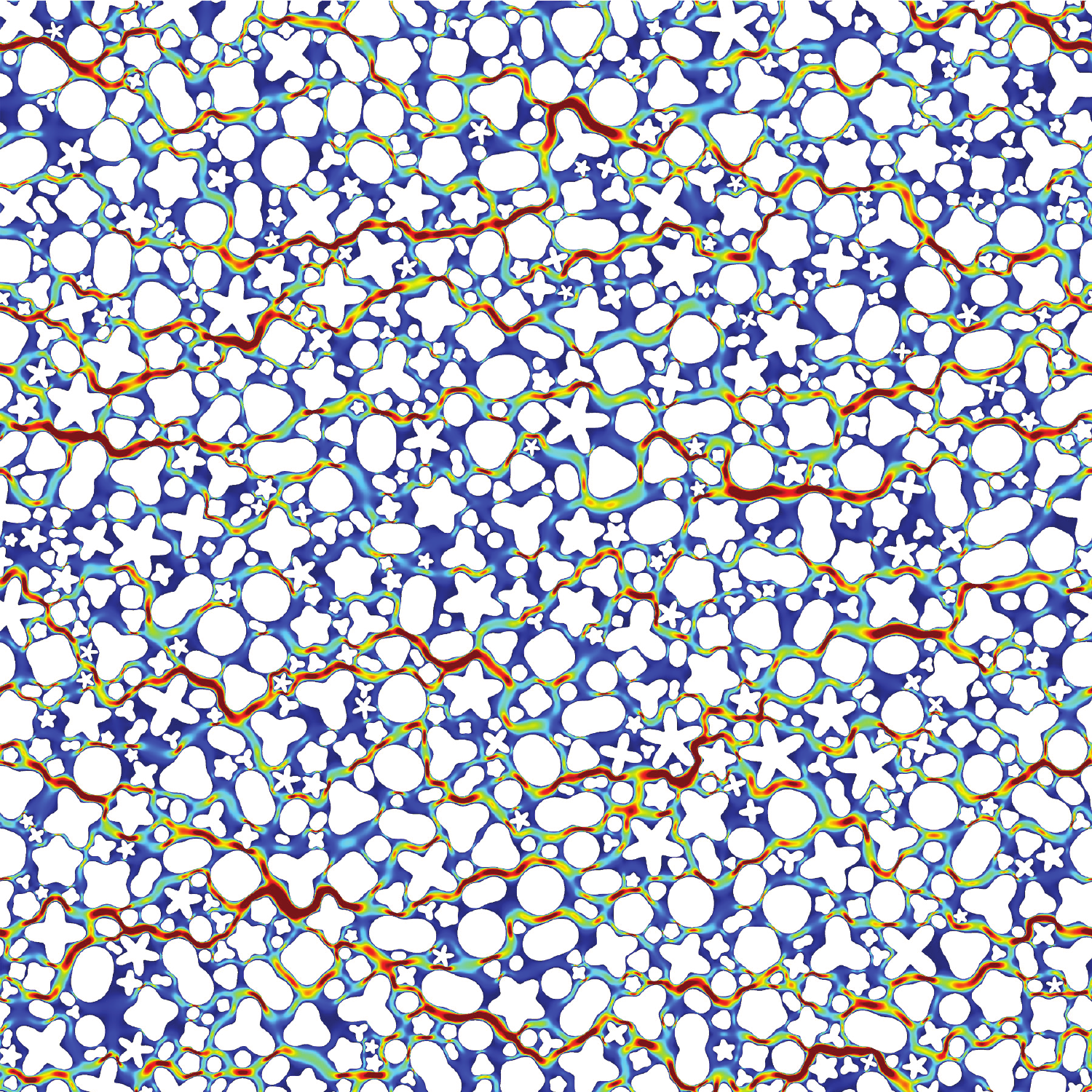}\ec
\ca{Periodic Stokes flow using in doubly-periodic domain with no-slip
boundary conditions on each of $K=10^3$ inclusions (Example 6).
A single unit cell is shown.
We apply no-slip boundary condition at the inclusion boundaries and a pressure difference which drives the flow from left to right. Color here indicates the fluid speed (red is high and blue is low). Each inclusion boundary has $N_k=350$ discretization points, resulting in 700000 degrees of freedom (not accounting for copies). The estimated error in the computed flux is $9 \times 10^{-9}$.
}{f:stocandy}
\efi
\clearpage
}

The traditional integral equation approach to periodic problems
replaces the free-space Green's function by a {\em periodic Green's function},
so that the density is solved on only the geometry lying in a single
unit cell \cite{helsingelasto,Moura94,Krop04,Otani06,cazeaux}.
There is an extensive literature on the evaluation of
such periodic Green's functions (e.g. in the Stokes case see \cite{pozper,vandervorst});
yet any such pointwise evaluation for each source-target
pair leads to $O(N^2)$ complexity, which is unacceptable for large problems.
There are two popular approaches to addressing this in a way compatible
with fast algorithms:
\bi
\item
Lattice sums.
Noticing that the difference between the periodic and free-space
Green's function is a smooth PDE solution in the unit cell,
one expands this in a particular solution basis
(a cylindrical or spherical expansion); the resulting
coefficients, which need to be computed only once for a given unit cell,
are called {\em lattice sums}
\cite{fmmlattice,helsingdiscs,Moura94,Krop04}.
They originate in the work of Rayleigh \cite{rayleighsums}
and in the study of ionic crystals
(both reviewed in \cite[Ch.~2--3]{latticesums}).
The FMM may then be periodized by combining the
top-level multipole expansion coefficients with the lattice sums
to give a correction to the local expansion coefficients
(in 2D this is a discrete convolution)
which may be applied fast \cite[Sec.~4.1]{lapFMM} \cite{Otani06}.
This method has been used for the doubly-periodic Laplace BVP
by Greengard--Moura \cite{Moura94}, and Stokes BVP by Greengard--Kropinski
\cite{Krop04}. 
\item
Particle-mesh Ewald (PME) methods.
These methods exploit Ewald's realization \cite{ewald} that,
although both the spatial and the spectral (Fourier) sums for the periodic
Green's function in general converge slowly,
there is an analytic 
splitting into
spatial and spectral parts such that both terms converge superalgebraically.
Numerically, the spatial part is now local,
while the spectral part may be applied by ``smearing'' onto a uniform
grid, using a pair of FFTs, and (as in the non-uniform FFT \cite{dutt})
correcting for the smearing.
The result is a $\bigO(N\log N)$ fast algorithm;
for a review see \cite{PMErev}.
Recently this has been improved
by Lindbo--Tornberg to achieve overall spectral accuracy
in the Laplace \cite{Lindbo11} and Stokes \cite{Lindbo10} settings,
with applications to 3D fluid suspensions \cite{klint}.
\ei

The scheme that we present is based purely on free-space Green's functions,
and has distinct advantages over the above two periodization approaches.
1) Only physically-meaningful boundary conditions and compatibility
conditions are used.
Aside from conceptual and algebraic simplification,
this also removes the need for (often mysterious and subtle \cite{latticesums})
choices of the values of various
conditionally or even non-convergent lattice sums based on physical arguments
\cite{drummond,lapFMM,helsingelasto,Moura94}
(e.g., six such choices are needed in \cite{Krop04}).
When it comes to skewed unit cells combined with applied pressure
driving, such hand-picked choices in lattice sum methods become far from
obvious.
2)
Free-space FMM codes may be inserted without modification,
and fast {\em direct} solvers (based on
hierarchical compression of matrix inverses)
\cite{hackbusch,m2011_1D_survey} require only slight modification
\cite{qpfds,periodicp}.
3) When multi-scale spatial features are present in the problem---commonly observed in practical applications such as polydisperse suspensions and complex microfludic chip geometries---PME codes become inefficient because of their need for uniform grids. In contrast, our algorithm retains the fast nature of the adaptive FMM
in such cases.
%
4)
In contrast to lattice sums and Ewald methods,
which intrinsically rely on radially-symmetric expansions and kernels,
our scheme can handle reasonably high aspect ratio 
unit cells with little penalty. 
5)
Since our Stokes formulation
does not rely on complex variables (e.g.\ Sherman--Lauricella \cite{Krop04}),
the whole scheme generalizes to 3D with no conceptual changes \cite{garythesis}.
%
A disadvantage of our scheme is that the
prefactor may be slightly worse than Ewald methods, due to the direct
summation of neighboring images. 
%

The basic idea is simple, and applies to integral representations for a
variety of elliptic PDEs.
Let $\cb$ be a (generally trapezoidal) unit cell ``box'',
and let $\pO$ be the material boundary which is to be periodized
($\pO$ may even intersect the walls of $\cb$).
Applying periodic boundary conditions is equivalent to summing the free-space
representation on $\pO$ over the infinite lattice, as in Fig.~\ref{f:geom}(a).
Our scheme sums this potential representation over
only the $3\times 3$ nearest neighbor images of $\pO$
(see Fig.~\ref{f:geom}(b)), but adds a small
auxiliary basis of smooth PDE solutions in $\cb$,
with coefficient vector $\xi$, that efficiently
represents the (distant) contribution of the rest of the infinite image lattice.
For this auxiliary basis we use point sources (Fig.~\ref{f:geom}(c));
around $10^2$ are needed, a small number which, 
crucially, is {\em independent of} $N$ and the boundary complexity.
We apply the usual (homogeneous) boundary condition on $\pO$,
which forms the first block row of a linear system.
We impose the desired physical periodicity
as auxiliary conditions on the Cauchy data differences 
between wall pairs $R$--$L$ and $U$--$D$ (see Fig.~\ref{f:geom}(a)),
giving a second block row.
The result is a $2\times 2$ block ``extended linear system'' (ELS),
\be
\mt{A}{B}{C}{Q}\vt{\btau}{\xi} \; = \; \vt{0}{\mbf{g}}
~.
\label{ELS}
\ee
Here $\mbf{g}$ accounts for the applied macroscopic thermal or pressure
gradient in the form of prescribed jumps across one unit cell.
Fig.~\ref{f:geom}(b)--(c) sketches the interactions described by
the four operators $A$, $B$, $C$, and $Q$.
This linear system is generally {\em rectangular and highly ill-conditioned},
but 
when solved in a backward-stable least-squares sense can result in
accuracies close to machine precision.

Three main routes to a solution of \eqref{ELS} are clear.
(a) The simplest is to use a dense direct least-squares
solve (e.g., via QR), but the $O(N^3)$ cost
becomes impractical for large problems with $N > 10^4$.
Instead, to create fast algorithms one exploits the fact that the numbers of
auxiliary rows and columns in \eqref{ELS} are both small,
as follows.
(b) One may attempt to eliminate $\xi$ to get the Schur complement
square linear system involving $A_\tbox{per}$, the periodized version of $A$,
of the form
\be
A_\tbox{per} \btau \; := \; (A - B Q^+ C) \btau \; = \; - B Q^+ \mbf{g}~,
\label{Aper}
\ee
where $Q^+$ is a pseudoinverse of $Q$ (see Section~\ref{s:schur}).
As we will see,
\eqref{Aper} can be well-conditioned when \eqref{A} is, and can
be solved iteratively, by
using the FMM to apply $A\btau$ while
applying the second term in its {\em low-rank} factored form $-B((Q^+C)\btau)$.
(c) One may instead eliminate $\btau$ by forming, then applying, a
compressed representation
of $A^{-1}$ via a fast direct solver;
this has proven useful for the case of fixed periodic geometry with multiple 
boundary data vectors \cite{qpfds,periodicp}.
Both methods (b) and (c) can achieve $O(N)$ complexity.

This paper explores route (b). A key contribution is
overcoming the stumbling block that,
for many standard PDEs and integral representations, $A_\tbox{per}$
as written in \eqref{Aper} does not in fact exist.
Intuitively, this is due to divergence in the sum of
the Green's function over the lattice.
For instance, the sum of $\log(1/r)$ clearly diverges,
and thus the single-layer representation for Laplace's equation,
which we use in Section~\ref{s:lap}, cannot be periodized unless the
integral of density (total charge) vanishes.
This manifests itself in the 2nd block row of \eqref{ELS}:
the range of $C$ contains vectors that are not in the range of $Q$,
thus $Q^+ C$ numerically blows up.
After studying in Sections \ref{s:lapempty} and \ref{s:stoempty}
the consistency conditions for the linear system involving
$Q$ (the ``empty unit cell'' problem),
we propose rank-1 (for Neumann Laplace) and rank-3 (for no-slip Stokes)
corrections to the ELS that allow
the Schur complement to be taken, and, moreover
that remove the nullspace associated with the physical BVP.
We justify these schemes rigorously, and test them numerically.

Although the idea of imposing periodicity via extra linear conditions
has aided certain electromagnetics solvers for some decades
\cite{hafner90,yuan08},
we believe that this idea was first combined with integral equations
by the first author and Greengard in \cite{qplp},
where $\xi$ controlled a Helmholtz local expansion.
Since then it has become popular for a variety of PDE
\cite{qplp,qpfds,gumerov,mlqp,cazeaux,periodicp,acper}.
From \cite{qplp} we also inherit the split into near and far images, and
cancellations in $C$ that allow a rapidly convergent scheme
even when $\pO$ intersects unit cell walls.
The use of point sources as a particular solution basis that is efficient
for smooth solutions is known as the
``method of fundamental solutions'' (MFS) \cite{Bo85,FaKa98},
``method of auxiliary sources'' \cite{Kupradze67},
``charge simulation method'' \cite{Ka88},
or, in fast solvers, ``equivalent source'' \cite{bruno01} or
``proxy'' \cite{Mart07} representations.
This is also used in the recent 3D periodization scheme
of Gumerov--Duraiswami \cite{gumerov}.
Finally, the low-rank perturbations which
enlarge the range of $Q$ are inspired by the low-rank perturbation methods
for singular square systems of Sifuentes et al.\ \cite{sifuentes2014randomized}.

Here is a guide to the rest of this paper.
In Section~\ref{s:lap} we present the periodic Neumann Laplace BVP,
possibly the simplest application of the scheme.
This first requires understanding and numerically solving
the ``empty unit cell'' subproblem, which we do in Section~\ref{s:lapempty}.
The integral formulation, discretization,
and direct numerical solution of the
ELS \eqref{ELS} follows in Section~\ref{s:lapbie}.
A general scheme to stably take the Schur complement is
given in Section~\ref{s:schur}, along with a scheme
to remove the physical nullspace specific to our representation.
The latter is tested numerically.
Section~\ref{s:multi} shows how the FMM and close-evaluation quadratures are
incorporated,
and applies them to large problems with thousands of inclusions.
Section~\ref{s:kappa} defines the effective conductivity tensor $\kappa$
and shows how to evaluate it efficiently using a pair of BVP solutions.
In Section~\ref{s:sto} we move to periodic Dirichlet (no-slip) Stokes flow,
and show how its periodizing scheme closely parallels the Laplace version.
In fact, the only differences are
new consistency conditions for the empty subproblem
(Section~\ref{s:stoempty}), the use of a 
combined-field formulation (Section~\ref{s:stobie}),
and the need for a rank-three perturbation for a stable Schur complement
(Section~\ref{s:stosch}).
Experiments on the drag of a square array of discs, and on large-scale flow
problems, are performed in Section~\ref{s:stonum}.
We discuss generalizations and conclude in Section~\ref{s:conc}.

Our aim is to illustrate, with two example BVPs, a unified route to
a well-conditioned periodization compatible with fast algorithms.
We take care to state the {\em consistency conditions} and {\em nullspaces}
of the full and empty problems, this being the main area of
problem-specific variation.
We believe that the approach
adapts simply to other boundary conditions and PDEs,
once the corresponding consistency conditions and nullspaces are laid out.
Although general aspect ratios and skew unit cells are amongst
our motivations,
for clarity we stick to the unit square; the generalization is
straightforward.



\begin{rmk}[Software]
We maintain documented MATLAB codes implementing the basic
methods in this paper at {\tt http://github.com/ahbarnett/BIE2D}
\label{r:code}\end{rmk}

\section{The Neumann Laplace case}
\label{s:lap}

We now present the heat/electrical conduction problem
in the exterior of a periodic lattice of insulating inclusions
(corresponding to $\sigma_d=0$ in \cite{Moura94}).
For simplicity we first assume a single inclusion $\Omega$ per unit cell.
Let $\ex$ and $\ey$ be vectors defining the lattice in $\RR^2$;
we work with the unit square so that $\ex = (0,1)$ and
$\ey = (1,0)$.
Let $\Omega_\Lambda := \{ \xx \in \RR^2: \xx + m\ex + n\ey \in \Omega
\mbox{ for some } m,n\in\mathbb{Z}\}$
represent the infinite lattice of inclusions.
The scalar $u$, representing electric potential or temperature,
solves the BVP
\bea
\Delta u & = & 0 \qquad \mbox{ in } \RR^2 \backslash \overline{\Omega_\Lambda}
\label{lap}
\\
u_n & = & 0 \qquad \mbox{ on } \partial
\Omega_\Lambda
\label{bc}
\\
u(\xx+\ex) - u(\xx) &=& p_1 \qquad \mbox{for all } \xx \in \RR^2 \backslash \overline{\Omega_\Lambda}
\\
u(\xx+\ey) - u(\xx) &=& p_2 \qquad \mbox{for all } \xx \in \RR^2 \backslash \overline{\Omega_\Lambda}
~,
\label{p2}
\eea
ie, $u$ is harmonic, has zero flux on inclusion boundaries,
and is periodic up to a given pair of constants $\mbf{p}=(p_1,p_2)$
which encode the external (macroscopic) driving.
We use the abbreviation $u_n = \frac{\partial u}{\partial n} = \nn\cdot \nabla u$,
where $\nn$ is the unit normal on the relevant evaluation curve.
\begin{pro} 
For each $(p_1,p_2)$ the solution to \eqref{lap}--\eqref{p2}
is unique up to an additive constant.
\label{p:lapnull}\end{pro}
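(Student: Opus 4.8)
The plan is to establish existence and uniqueness separately, with uniqueness being the cleaner half and the natural place to start. Given two solutions $u_1, u_2$ for the same driving pair $(p_1,p_2)$, their difference $v := u_1 - u_2$ is harmonic in $\RR^2\backslash\overline{\Omega_\Lambda}$, satisfies $v_n = 0$ on $\partial\Omega_\Lambda$, and is \emph{exactly} periodic: $v(\xx+\ex)=v(\xx)$ and $v(\xx+\ey)=v(\xx)$. Thus $v$ descends to a harmonic function on the compact manifold-with-boundary obtained by taking the unit cell $\cb$, removing $\Omega$, and identifying opposite walls (a torus with one hole). First I would apply Green's first identity to $v$ on this domain $\cbo$ (with the wall-pair identifications): the boundary terms on $R$ and $L$ cancel by periodicity of both $v$ and $v_n$ (the outward normals on paired walls are opposite), likewise $U$ and $D$, and the term on $\pO$ vanishes because $v_n=0$ there. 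This leaves $\int_{\cbo} |\nabla v|^2 = 0$, so $\nabla v \equiv 0$ on the (connected) exterior domain, hence $v$ is constant. That is exactly the claimed conclusion.

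For existence, I would reduce to a standard well-posed problem. Pick any smooth function $w$ on $\RR^2$ that is \emph{affine}, namely $w(\xx) = p_1 x_1 + p_2 x_2$ where $\xx=(x_1,x_2)$ in coordinates aligned so that $w(\xx+\ex)-w(\xx)=p_1$ and $w(\xx+\ey)-w(\xx)=p_2$; note $w$ is harmonic. Then $u$ solves \eqref{lap}--\eqref{p2} if and only if $\tilde u := u - w$ is harmonic in $\RR^2\backslash\overline{\Omega_\Lambda}$, \emph{fully periodic}, and satisfies the inhomogeneous Neumann condition $\tilde u_n = -w_n = -\nn\cdot(p_1,p_2)$ on $\partial\Omega_\Lambda$. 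This is now a Neumann problem for the Laplacian on the compact domain $\cbo$ with the torus identifications on the walls. The Fredholm alternative for this self-adjoint elliptic problem says a solution exists precisely when the prescribed Neumann data integrates to zero against the constant (the only element of the cokernel), i.e. $\int_{\pO} \nn\cdot(p_1,p_2)\, ds = 0$. By the divergence theorem applied to the constant vector field $(p_1,p_2)$ over $\Omega$, $\int_{\pO}\nn\cdot(p_1,p_2)\,ds = \int_\Omega \nabla\cdot(p_1,p_2)\,d\xx = 0$, so the compatibility condition holds automatically and $\tilde u$ exists; then $u = \tilde u + w$ solves the original BVP, and it is determined up to the additive constant (the kernel of the Neumann problem), consistent with the uniqueness argument above.

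The main obstacle, and the only point requiring care, is making the ``descends to the torus'' step rigorous: one must check that the exterior domain $\RR^2\backslash\overline{\Omega_\Lambda}$ really is connected (true once $\Omega$ is compactly contained in an open cell, which is the standing assumption) so that ``$\nabla v\equiv 0$'' forces a single global constant rather than one constant per component, and one must confirm that $\pO$ meeting the unit-cell walls (allowed by the paper) does not spoil the cancellation in Green's identity --- it does not, because the periodicity identifications are applied to the \emph{whole} Cauchy data $(v,v_n)$ regardless of where $\pO$ sits. A secondary technical point is the choice of function space: working with $v$ of at most linear growth (which is all the periodicity conditions permit, since $u$ minus a linear function is bounded) ensures the energy integral over a single fundamental cell is finite and that no contribution ``at infinity'' is lost. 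With these observations the argument is complete.
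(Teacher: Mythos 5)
Your uniqueness argument is essentially identical to the paper's: take the difference of two solutions, apply Green's first identity over a unit cell $\cbo$, let the wall terms cancel by periodicity and the $\pO$ term vanish by the Neumann condition, and conclude $\nabla v\equiv 0$; the ``descends to the torus'' framing is just a conceptual repackaging of that same cancellation. Note that the proposition only claims uniqueness up to a constant, so your existence paragraph (subtracting an affine $w$, invoking the Fredholm alternative for the periodic Neumann problem, and verifying the compatibility $\int_\pO \nn\cdot(p_1,p_2)\,ds=0$ via the divergence theorem) is correct but proves more than was asked; it would need the lattice vectors handled consistently with the paper's convention $\ex=(0,1)$, $\ey=(1,0)$, but that is a notational slip, not a gap.
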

\begin{proof}
As usual, one considers the homogeneous BVP arising when $u$ is the difference
of two solutions.
Let $\cb$ be any unit cell (tiling domain) containing $\Omega$, then using
Green's first identity,
$$0 = \int_{\cbo} u\Delta u = - \int_{\cbo} |\nabla u|^2 + \int_{\partial\cb} u u_n - \int_\pO uu_n
~.$$
The first boundary term cancels by periodicity, the second by \eqref{bc},
hence $\nabla u \equiv 0$.
\end{proof}

To solve the periodic BVP we first re-express it as a BVP on a
single unit cell $\cb$ with coupled boundary values
on the four walls comprising its boundary
$\partial\cb := L \cup R \cup D \cup U$; see Fig.~\ref{f:geom}(a).
For simplicity we assume for now that the square $\cb$ can be chosen such
that $\Omega$ does not intersect any of the walls; this restriction
will later be lifted (see Remark~\ref{r:intersect}).
We use the notation $u_L$ to mean the restriction of $u$ to the wall $L$,
and $u_{nL}$ for its normal derivative using the normal on $L$ (note that
this points to the right, as shown in the figure.)
Consider the following reformulated BVP:
\bea
\Delta u & = & 0 \qquad \mbox{ in } \cb \backslash \overline{\Omega}
\label{lap2}
\\
u_n & = & 0 \qquad \mbox{ on } \partial \Omega
\label{bc2}
\\
u_R - u_L &=& p_1
\label{u1}
\\
u_{nR} - u_{nL} &=& 0
\label{j1}
\\
u_U - u_D &=& p_2
\label{u2}
\\
u_{nU} - u_{nD} &=& 0
~.
\label{j2}
\eea
Clearly any solution to \eqref{lap}--\eqref{p2} also satisfies
\eqref{lap2}--\eqref{j2}.
Because of the unique continuation of Cauchy data $(u,u_n)$ as a solution
to the 2nd-order PDE, the converse holds, thus the two BVPs are
equivalent.
We define the {\em discrepancy} \cite{qplp} of a solution $u$
as the stack of the four functions on the left-hand side of \eqref{u1}--\eqref{j2}.

\subsection{The empty unit cell discrepancy BVP and its numerical solution}
\label{s:lapempty}

We first analyze, then solve numerically, an important subproblem
which we call the ``empty unit cell BVP.''
We seek a harmonic function $v$ matching a
given discrepancy $g = [g_1;g_2;g_3;g_4]$
(i.e., a stack of four functions defined on the
walls $L$, $L$, $D$, $D$, respectively).
That is,
\bea
\Delta v & = & 0 \qquad \mbox{ in } \cb
\label{lapv} \\
v_R - v_L &=& g_1
\label{g1} \\
v_{nR} - v_{nL} &=& g_2
\label{g2} \\
v_U - v_D &=& g_3
\label{g3} \\
v_{nU} - v_{nD} &=& g_4
\label{g4}
~.
\eea
We now give the consistency condition and nullspace for this BVP,
which shows that it behaves essentially
like a square linear system with nullity 1.
\begin{pro} 
A solution $v$ to \eqref{lapv}--\eqref{g4}
exists if and only if $\int_L g_2 ds + \int_D g_4 ds = 0$,
and is then unique up to a constant.
\label{p:lapemptynull}\end{pro}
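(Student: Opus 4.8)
The plan is to prove necessity by a Green's-identity / divergence-theorem argument, and sufficiency plus uniqueness by an explicit construction using a harmonic function that is linear plus periodic. First I would establish necessity. Suppose $v$ solves \eqref{lapv}--\eqref{g4}. Integrating $\Delta v = 0$ over $\cb$ and applying the divergence theorem gives $\int_{\partial\cb} v_n\, ds = 0$, where $v_n$ uses the outward normal. Splitting $\partial\cb$ into its four walls and orienting normals as in Fig.~\ref{f:geom}(a), the outward normal on $R$ agrees with the reference normal (pointing right) while on $L$ it is opposite; similarly for $U$ versus $D$. Hence $\int_{\partial\cb} v_n\,ds = \int_R v_{nR}\,ds - \int_L v_{nL}\,ds + \int_U v_{nU}\,ds - \int_D v_{nD}\,ds$. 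Now I would use that $R$ and $L$ are translates of one another (by $\ex$) so $ds$ matches pointwise, and likewise $U$ and $D$ (by $\ey$), so this equals $\int_L (v_{nR}-v_{nL})\,ds + \int_D (v_{nU}-v_{nD})\,ds = \int_L g_2\,ds + \int_D g_4\,ds$. This must vanish, giving the stated condition.

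Next, sufficiency. Assume the consistency condition holds. The strategy is to peel off the ``mean-gradient'' part of $v$ first. Note that an affine function $a x_1 + b x_2 + c$ is harmonic and has constant discrepancy $(a, 0, b, 0)$ (using $|\ex|=|\ey|=1$). More generally I want to reduce to the case where $g_1$ and $g_3$ are the constant functions; but the cleanest route is the classical one via Fourier series on the square. After subtracting a suitable affine function to make $\int_L g_2 = \int_D g_4 = 0$ individually is \emph{not} automatic, so instead I would expand $v$ and the data in the Fourier basis adapted to the square and solve mode by mode: for each nonzero frequency the periodicity-defect equations \eqref{g1}--\eqref{g4} determine the two constants multiplying the two independent harmonic exponentials at that frequency, and the $2\times 2$ system is invertible precisely because the frequency is nonzero. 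The zero-frequency (constant-in-$x_2$, say) sector is where the single consistency condition and the single nullspace constant both live: the flux-type data $g_2, g_4$ at that mode must have vanishing total, which is exactly $\int_L g_2 + \int_D g_4 = 0$, and the remaining freedom is the additive constant $c$. Assembling the modes and checking convergence (the $g_i$ being smooth, the Fourier coefficients decay rapidly) yields a solution.

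For uniqueness up to a constant, I would invoke Proposition~\ref{p:lapnull}'s argument in the simpler empty-cell setting: if $v$ has zero discrepancy, then $v$ extends by periodicity to an entire harmonic function on $\RR^2$ that is bounded (being periodic and smooth), hence constant by Liouville; equivalently, run Green's first identity $0 = \int_\cb v\Delta v = -\int_\cb |\nabla v|^2 + \int_{\partial\cb} v v_n$ and note the boundary term vanishes because both $v$ and $v_n$ match on opposite walls (zero discrepancy), forcing $\nabla v \equiv 0$.

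The main obstacle is bookkeeping rather than depth: getting the signs and the pairing of walls right in the boundary integral (the reference normals on $L$ and $D$ point ``into'' the cell relative to the outward normal, which is the source of the minus signs that make $\int_L g_2 + \int_D g_4$ rather than a difference appear), and, on the sufficiency side, being careful that the Fourier construction genuinely produces a \emph{single-valued} harmonic function on $\cb$ with exactly the prescribed discrepancies—i.e., that the affine/linear-growth piece is correctly separated from the genuinely periodic piece. I expect to present the necessity direction in full (it is short) and to sketch sufficiency either via the Fourier argument above or, more in the spirit of the paper, by exhibiting $v$ as a single-layer or dipole potential on the walls and citing unique continuation, whichever the authors find cleaner.
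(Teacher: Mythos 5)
Your necessity argument (the zero-flux identity $\int_{\pcb} v_n = 0$ for harmonic $v$, then careful bookkeeping of the four walls with their reference normal senses to reduce to $\int_L g_2 + \int_D g_4 = 0$) and your uniqueness argument (Green's first identity on $\cb$, or equivalently periodic extension plus Liouville) are exactly what the paper's terse proof does. The paper does not in fact prove the existence (``if'') direction at all—it only proves ``only if'' and uniqueness—so your Fourier-mode construction is a genuine addition beyond what the authors wrote; it is a sound plan, though as you anticipate the delicate point is isolating the zero-frequency sector, where the affine growth, the single consistency constraint, and the one-dimensional nullspace all live together.
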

\begin{proof}
The zero-flux condition $\int_\pcb v_n = 0$ holds for harmonic functions.
Writing $\pcb$ as the union of the four walls, with their normal senses,
gives the sum of the integrals of \eqref{g2} and \eqref{g4}.
Uniqueness follows from the method of proof of Prop.~\ref{p:lapnull}.
\end{proof}

We now describe a numerical solution method for this BVP that is accurate for a
certain class of data $g$, namely those for which the solution $v$
may be continued as a regular solution to Laplace's equation into a large
neighborhood of $\cb$, and hence each of $g_1,\dots,g_4$ is analytic.
It will turn out that this class is sufficient
for our periodic scheme (essentially because $v$ will only have to
represent distant image contributions, as shown in Fig.~\ref{f:geom}(c)).
Let $\cb$ be centered at the origin.
Recalling the fundamental solution to Laplace's equation,
\be
G(\xx,\yy) = \frac{1}{2\pi}\log \frac{1}{r}~,
\qquad r:= \|\xx-\yy\|~,
\label{G}
\ee
we approximate the solution in $\cb$ by a linear combination of
such solutions with source points $\yy_j$ lying uniformly on a circle
of radius $\rp>r_\cb$, where $r_\cb := 1/\sqrt{2}$
is the maximum radius of the unit cell.
That is, for $\xx\in \cb$, 
\be
v(\xx) \; \approx \; \sum_{j=1}^M \xi_j \phi_j(\xx)~,
\qquad \phi_j(\xx) := G(\xx,\yy_j)~,
\qquad \yy_j := (\rp \cos 2\pi j/M, \rp \sin 2\pi j/M)~,
\label{mfs}
\ee
with unknown coefficient vector $\xi := \{ \xi_j\}_{j=1}^M$.
As discussed in the introduction, this is known as the MFS.
Each basis function $\phi_j$ is a particular solution to \eqref{lapv} in $\cb$,
hence only the boundary conditions need enforcing
(as in Rayleigh's original method \cite{rayleighsums}).

This MFS representation is complete for harmonic functions. More precisely,
for $v$ in a suitable class, it is capable of exponential accuracy uniformly
in $\cb$. In particular, since $\cb$ is contained within the ball
$\|\xx\|\le r_\cb$, we may apply known convergence results to get the following.

\begin{thm}  
Let $v$ extend as a regular harmonic function throughout the closed ball
$\|\xx\|\le \rho$ of radius $\rho>r_\cb$.
Let the fixed proxy radius $\rp\neq 1$ satisfy $\sqrt{r_\cb \rho} < \rp < \rho$.
For each $M\ge 1$ let $\{\phi^{(M)}_j\}_{j=1}^M$ be a proxy basis set
as in \eqref{mfs}.
Then there is a sequence of coefficient vectors $\xi^{(M)}$, one vector
for each $M$, and a constant $C$ dependent only on $v$, such that
$$
\bigl\|v - \sum_{j=1}^M \xi_j^{(M)} \phi_j^{(M)} \bigr\|_{L^\infty(\cb)}
\; \le \;
C \left(\frac{\rho}{r_\cb}\right)^{-M/2} ~, \qquad M = 1,2,\ldots~.
$$
In addition, the vectors may be chosen so that
the sequence $\|\xi^{(M)}\|_2$, $M=1,2,\ldots$, is bounded.
\label{t:Qconv}
\end{thm}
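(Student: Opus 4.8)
The plan is to reduce the statement to a classical result on approximation by the single-layer potential on a circle, using the Kelvin-transform-free observation that for a fixed source circle of radius $\rp$, the functions $\phi_j(\xx) = G(\xx,\yy_j)$ span (as $M\to\infty$, after suitable linear combinations) exactly the harmonic functions that are regular inside the disc $\|\xx\|<\rp$. First I would pass from the discrete proxy points $\yy_j$ to the continuous single-layer operator: given $v$ regular in $\|\xx\|\le\rho$, I would invoke the standard fact that $v$ restricted to $\cb$ (indeed to the disc $\|\xx\|\le r_\cb$) admits a convergent expansion $v(\xx) = \sum_{n\ge 0} (a_n \cos n\theta + b_n\sin n\theta) s^n$ in polar coordinates $\xx=(s\cos\theta, s\sin\theta)$, with the coefficients decaying like $\rho^{-n}$ because $v$ is regular out to radius $\rho$. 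The fundamental solution on the circle of radius $\rp$ has the Fourier--Bessel-type (here just Fourier) expansion $G(\xx,\yy_j) = \frac{1}{2\pi}\bigl(\log\frac{1}{\rp} + \sum_{n\ge 1}\frac{1}{n}\,(s/\rp)^n\cos n(\theta-\theta_j)\bigr)$ for $s<\rp$, where $\theta_j = 2\pi j/M$. Thus a combination $\sum_j \xi_j\phi_j$ produces, at mode $n$, a coefficient proportional to $\rp^{-n}$ times the discrete Fourier transform of $\{\xi_j\}$ at frequency $n$.

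Next I would construct the coefficient vector explicitly. To match modes $0$ through $M/2$ (say $M$ even), I would choose $\xi_j$ so that its discrete Fourier coefficients at frequency $n$ equal $n\,\rp^{n}$ times the target coefficients $(a_n,b_n)$, for $1\le n\le M/2$, and handle the constant mode $n=0$ separately (this is exactly where $\rp\neq 1$ matters: the $\log(1/\rp)$ factor on the $n=0$ term must be nonzero and invertible, which forces $\rp\ne 1$). Aliasing then injects spurious contributions into modes $n$ and $n\pm M$ etc., but these are controlled: the genuine tail $\sum_{n>M/2}$ of $v$ is $O((r_\cb/\rho)^{M/2})$ uniformly on $\cb$ since $s\le r_\cb$, and the aliased images of the low modes, being evaluated at radius $s\le r_\cb < \rp$, are suppressed by an extra factor $(r_\cb/\rp)^{M}$ relative to their nominal size; combining $\sqrt{r_\cb\rho}<\rp$ gives $(r_\cb/\rp)^2 < r_\cb/\rho$, so the aliasing error is also $O((r_\cb/\rho)^{M/2})$. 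Summing the truncation and aliasing contributions yields the bound $C(\rho/r_\cb)^{-M/2}$, with $C$ depending only on the expansion coefficients of $v$, i.e.\ only on $v$.

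For the boundedness of $\|\xi^{(M)}\|_2$: by Parseval on the finite group $\mathbb{Z}/M\mathbb{Z}$, $\|\xi^{(M)}\|_2^2$ equals (up to the $M$-normalization) $\sum_{1\le n\le M/2} (n\rp^n)^2(a_n^2+b_n^2)$ plus the constant-mode term. Since $|a_n|,|b_n| = O(\rho^{-n})$, the summand is $O(n^2(\rp/\rho)^{2n})$, and because $\rp<\rho$ this geometric-type series converges to a finite limit independent of $M$; hence the sequence $\|\xi^{(M)}\|_2$ is bounded. The main obstacle, and the part requiring care rather than routine computation, is the aliasing estimate: one must verify that the condition $\sqrt{r_\cb\rho}<\rp<\rho$ is precisely what makes the geometric gain from evaluating inside radius $\rp$ dominate the geometric loss from the source coefficients growing like $\rp^n$, so that the aliased terms decay at the advertised rate $(\rho/r_\cb)^{-M/2}$ rather than some slower rate. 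Alternatively, and perhaps more cleanly, I would cite the known convergence theory for the MFS / charge simulation method on concentric circles (the same geometry appears in the references on ``equivalent sources'' and ``proxy'' representations, and the $\sqrt{r_\cb\rho}$ threshold is the classical one for the fundamental-solution method), and merely note that the constant-mode nondegeneracy requires $\rp\ne 1$.
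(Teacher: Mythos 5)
Your proposal is essentially correct and reconstructs, from first principles, the argument that the paper itself does not write out: the paper simply cites Katsurada~\cite{Ka89} for the exponential rate and Barnett--Betcke~\cite{mfs} (via the Helmholtz literature) for the boundedness of $\|\xi^{(M)}\|_2$. Those references establish the result by exactly the Fourier-mode/aliasing analysis on concentric circles that you lay out, so you have not found a genuinely different route but rather supplied the details the paper delegates. The key mechanics you identify are all right: the expansion of $\log\frac{1}{|\xx-\yy_j|}$ in powers of $s/\rp$, the DFT construction matching modes up to $M/2$ (with $\rp\neq1$ needed solely so the constant mode $\log(1/\rp)$ is invertible), the truncation error $\bigO\bigl((r_\cb/\rho)^{M/2}\bigr)$, and the verification that $\rp>\sqrt{r_\cb\rho}$ makes the aliasing error $\bigO\bigl((r_\cb/\rp)^{M}\bigr)$ no larger. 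Your Parseval estimate for $\|\xi^{(M)}\|_2$ is also the standard one and uses $\rp<\rho$ in just the right place.

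Two small points to tighten. First, the aliasing analysis should in principle sum the contributions from modes $n+kM$ for all $k\geq 1$ (and the conjugate-symmetric negative side), not only $k=1$; the geometric decay in $(r_\cb/\rp)^{kM}$ makes this a convergent correction but the statement ``suppressed by an extra factor $(r_\cb/\rp)^M$'' is only the leading term. Second, your DFT count is slightly informal: with $M$ real unknowns you can match the cosine and sine parts of modes $0$ through roughly $M/2$, with the Nyquist mode (if $M$ is even) only half-constrained; this does not change the rate but is the sort of bookkeeping that a fully rigorous write-up must address. Neither of these affects the conclusion.
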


This exponential convergence---with rate controlled by the distance to the
nearest singularity in $v$---was derived by Katsurada \cite[Rmk.~2.2]{Ka89} in the context of
collocation (point matching) for a Dirichlet BVP on the boundary of the disc
of radius $r_\cb$, which is enough to guarantee the existence of
such a coefficient vector sequence.
The boundedness of the coefficient norms has been known in the
context of Helmholtz scattering in the Russian literature for
some time (see \cite[Sec.~3.4]{Ky96} and references within, and
\cite[Thm.~2.4]{doicu}).
We do not know of a reference stating this for the Laplace case,
but note that the proof is identical to that in
\cite[Thm.~6]{mfs}, by using equation (13) from that paper.
The restriction $\rp<\rho$ is crucial for high accuracy, since
when $\rp>\rho$, although convergence occurs, $\|\xi^{(M)}\|_2$ blows
up, causing accuracy loss due to catastrophic cancellation \cite[Thm.~7]{mfs}.
\begin{rmk}   
Intuitively, the restriction $\rp\neq1$ arises because the proxy basis may
be viewed as a periodic trapezoid rule quadrature approximation to a
single-layer potential on the circle $\|\xx\|=\rp$, a representation which 
is incomplete when $\rp=1$: it cannot represent the constant function 
\cite[Rmk.~1]{YanSloan}.
\end{rmk}

To enforce boundary conditions, let $\xx_{iL} \in L$, $\xx_{iD}\in D$,
$i=1,\dots,m$, be two sets of $m$
collocation points, on the left and bottom wall
respectively; we use 
Gauss--Legendre nodes. 
Enforcing \eqref{g1} between collocation points on the left and right walls
then gives
\be
\sum_{j=1}^M [\phi_j(\xx_{iL}+\ex) - \phi_j(\xx_{iL})] \xi_j \; = \; g_1(\xx_{iL})
~,
\qquad \mbox{ for all } \; i=1,\dots,m~.
\label{Q1}
\ee
Continuing in this way, the
full set of discrepancy conditions \eqref{g1}--\eqref{g4}
gives the linear system
\be
Q \xi \; = \; \mbf{g}
~,
\label{Qsys}
\ee
where $\mbf{g} \in \RR^{4m}$
stacks the four vectors of discrepancies sampled at the collocation points,
while the matrix $Q = [Q_1;Q_2;Q_3;Q_4]$ consists of four block rows,
each of size $m\times M$.
By reading from \eqref{Q1}, one sees that $Q_1$ has elements
$(Q_1)_{ij} = \phi_j(\xx_{iL}+\ex) - \phi_j(\xx_{iL})$.
Analogously,
$(Q_2)_{ij} = \frac{\partial\phi_j}{\partial n}(\xx_{iL}+\ex) -
\frac{\partial\phi_j}{\partial n}(\xx_{iL})$,
$(Q_3)_{ij} = \phi_j(\xx_{iD}+\ey) - \phi_j(\xx_{iD})$,
and finally
$(Q_4)_{ij} = \frac{\partial\phi_j}{\partial n}(\xx_{iD}+\ey) -
\frac{\partial\phi_j}{\partial n}(\xx_{iD})$.

\bfi  
\ig{width=2in}{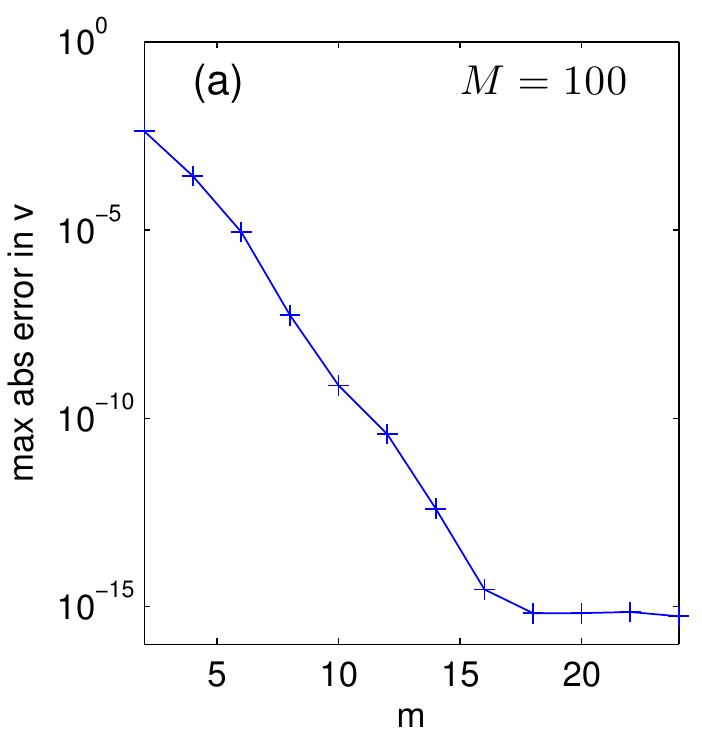}
\quad
\ig{width=2in}{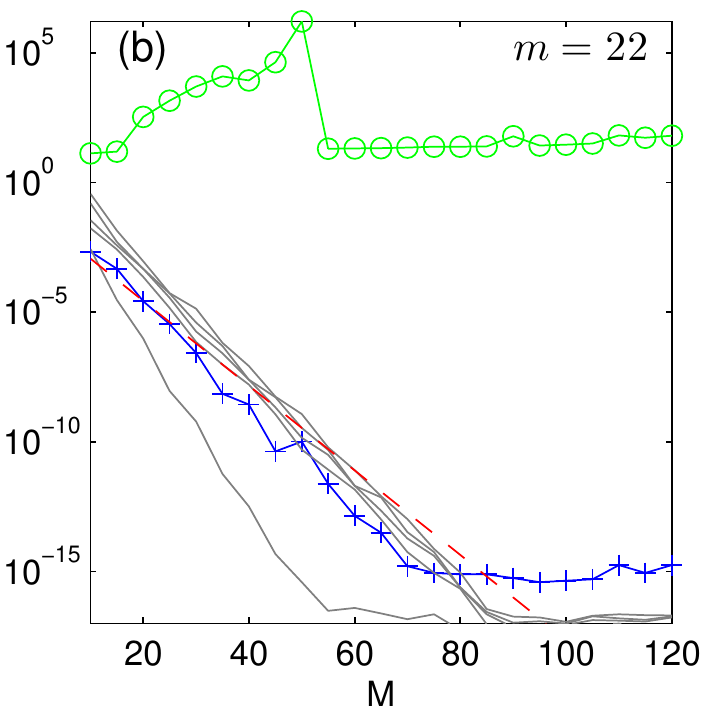}
\quad
\ig{width=2in}{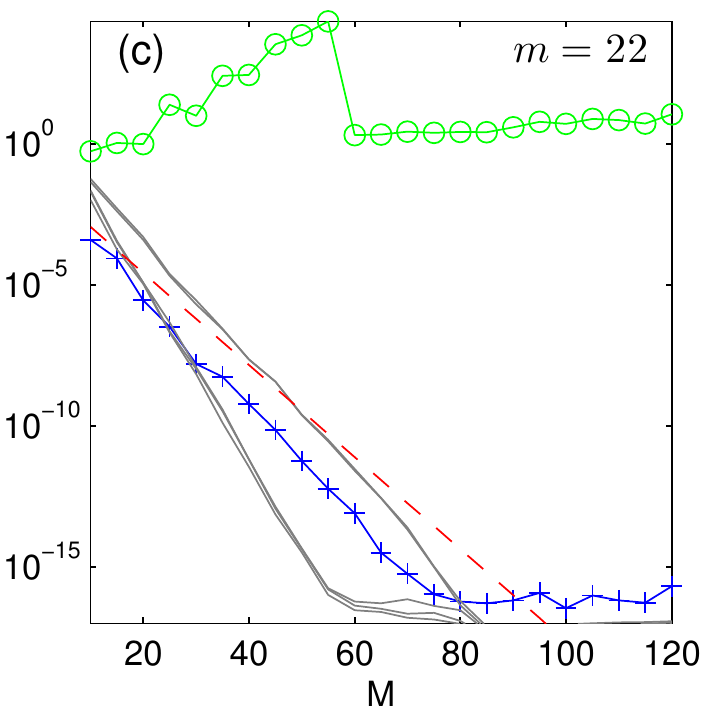}
\ca{Convergence of the proxy point numerical
scheme for the empty discrepancy BVP, for the case of
known solution with singularity $\xx_0 = 1.5(\cos .7,\sin .7)$ is a
distance $1.5$ from the center of the square unit cell of side $1$.
The proxy radius is $\rp=1.4$.
(a)-(b) are for the Laplace case with $g$ deriving from the known solution
$v(\xx) = \log \| \xx - \xx_0\|$; (c) is for the Stokes case
with known $\mbf{v}$ coming from a stokeslet at $\xx_0$ with force
$\ff_0 = [0.3;-0.6]$.
(a) Shows convergence in maximum absolute error in $v$ at 100 target points
interior to $\cb$, vs the number $m$ of wall quadrature nodes,
with fixed $M=100$ proxy points.
(b) Shows convergence in $M$ with fixed $m=22$ ($+$ signs)
and its prediction via Thm.~\ref{t:Qconv} (dotted line),
the lowest five singular values of $Q$ (grey lines),
and the solution vector norm $\|\xi\|_2$ (circles).
(c) is same as (b) but for Stokes.
Note that in the Laplace case (b) there is one singular value smaller
than the others, whereas for Stokes (c) there are three such singular
values.
}{f:Qconv}
\efi   

We solve the (generally rectangular)
system \eqref{Qsys} in the least squares sense,
which corresponds to minimizing a weighted $L^2$-norm of the discrepancy error
(we find in practice that there is no advantage to
incorporating the square-root of the weights associated with the
collocation nodes).
As is well known in the MFS community \cite{Ka89,FaKa98}, $Q$
becomes exponentially ill-conditioned
as $M$ grows;
intuitively this follows from the exponential decay of
the singular values of the single-layer operator from the proxy radius $\rp$
to the circle radius $r_\cb$ containing $\cb$.
Thus, a least squares solver is needed which can handle rank-deficiency stably,
i.e.\ return a small-norm solution when one exists.
In this case the ill-conditioning causes no loss of accuracy, and,
even though there is instability in the vector $\xi$,
for the evaluation of $v$ errors close to machine precision ($\emach$) are 
achieved \cite{mfs}.
\begin{rmk}
For this and subsequent direct solves
we use MATLAB's {\tt linsolve} (with the option {\tt RECT=true}
to prevent the less accurate LU from being used in the square case),
which uses column-pivoted QR to find the so-called
``basic'' solution \cite[Sec.~5.7]{molermatlab} having at most $r$ nonzero
entries, where $r$ is the numerical rank;
this is close to having minimum norm \cite[Sec.~5.5]{golubvanloan}.
\label{r:linsolve}
\end{rmk}

We illustrate this 
with a simple numerical test, in which
$g$ is the discrepancy of a known harmonic function $v$ of typical magnitude $\bigO(1)$ and with sufficiently
distant singularity.
Once $Q$ is filled and \eqref{Qsys} solved, the numerical solution
is evaluated via \eqref{mfs}, and the maximum error at 100
random target points in $\cb$ is taken (after removing an
overall constant error, expected from Prop.~\ref{p:lapemptynull}).
Fig.~\ref{f:Qconv}(a)
shows exponential convergence in this error vs the number of wall nodes $m$.
Fig.~\ref{f:Qconv}(b) shows exponential convergence
with respect to $M$, the number of proxy points, with
a rate slightly exceeding the predicted rate.
It is clear that
whenever $m\ge 20$ and $M\ge 70$ the norm $\|\xi\|_2$ remains $\bigO(1)$,
and around 15 digits of accuracy result.

The decaying lowest few singular values of $Q$ are also shown in panel (b):
in particular
there is {\em one} singular value decaying faster than all others.
It is easy to verify that this
corresponds to the null-space of the BVP (Prop.~\ref{p:lapemptynull});
indeed we have tested that
its right singular vector is approximately constant, and generates via
\eqref{mfs} the constant function in $\cb$
to within $\bigO(\emach)$ relative error.
Likewise, the consistency condition in Prop.~\ref{p:lapemptynull}
manifests itself in $\Nul Q\tr$:
let $\mbf{w}$ be the vector that applies the discretization of
this consistency condition to a vector $\mbf{g}$, namely
\be
\mbf{w} \; := \; [\mbf{0}_m;\mbf{w}_L;\mbf{0}_m;\mbf{w}_D] \; \in \; \RR^{4m}
~,
\label{w}
\ee
where semicolons indicate vertical stacking,
$\mbf{w}_L,\mbf{w}_D \in \RR^m$ are the vectors of weights
corresponding to the collocation nodes on $L$, $D$,
and $\mbf{0}_m\in\RR^m$ is the zero vector. 
Then we expect that
\be
\mbf{w}\tr Q \; \approx \; \mbf{0}_M\tr
~,
\label{wQ}
\ee
and indeed observe numerically that $\|\mbf{w}\tr Q\|_2 = \bigO(\emach)$
once $m\ge 20$.
In summary, although the matrix $Q$
is generally rectangular and ill-conditioned, it also
inherits both aspects of the unit nullity of the empty BVP that it discretizes.

\begin{rmk}
A different scheme is possible in which $Q$ would be square, and
(modulo a nullity of one, as above)
well-conditioned, based on a ``tic-tac-toe'' set of layer potentials
(see \cite[Sec.~4.2]{qplp} in the Helmholtz case).
However, we recommend the above proxy point version, since
i) the matrix $Q$ is so small that handling its ill-conditioning is very cheap,
ii) the tic-tac-toe scheme
demands close-evaluation quadratures for its layer potentials,
and iii) the tic-tac-toe scheme is more complicated.
\end{rmk}

\subsection{Extended linear system for the conduction problem}
\label{s:lapbie}

We now treat the above empty BVP solution scheme as a component in a
scheme for the periodic BVP \eqref{lap2}--\eqref{j2}.
Simply put, we
take standard potential theory for the Laplace equation \cite[Ch.~6]{LIE},
and augment this by enforcing periodic boundary conditions.
Given a density function $\tau$ on the inclusion boundary $\pO$,
recall that the single-layer potential, evaluated at a target point $\xx$,
is defined by  
\be
v = ({\cal S}_\pO \tau)(\xx) := \int_\pO G(\xx,\yy) \tau(\yy) ds_\yy
= \frac{1}{2\pi}\int_\pO \log \frac{1}{\|\xx-\yy\|} \tau(\yy) ds_\yy
~,\qquad
\xx\in\RR^2~.
\label{lapS}
\ee
Using $\nx$ to indicate the outward normal at $\xx$,
this potential obeys the jump relation \cite[Thm.~6.18]{LIE}
\be
v_n^\pm :=
\lim_{h\to0^+} \nx \cdot \nabla ({\cal S}_\pO \tau)(\xx \pm h\nx)
\; = \;
\bigl((\mp\half + D\tr_{\pO,\pO})\tau\bigr)(\xx)
~,
\label{JR}
\ee
where $D\tr_{\Gamma',\Gamma}$ denotes the usual transposed
double-layer operator from a general source curve $\Gamma$
to a target curve $\Gamma'$, defined by
\be
(D\tr_{\Gamma',\Gamma}\tau)(\xx) = \int_\Gamma \frac{\partial G(\xx,\yy)}{\partial \nx} \tau(\yy) ds_\yy
 = \frac{1}{2\pi} \int_\Gamma \frac{-(\xx-\yy)\cdot \nx}{\|\xx-\yy\|^2} \tau(\yy) ds_\yy
~, \qquad \xx \in \Gamma'~.
\label{DT}
\ee
The integral implied by the self-interaction operator $D\tr_{\pO,\pO}$ is to be interpreted in the principal value sense.

Our representation for the solution
sums the single-layer potential over the $3\times 3$ copies closest
to the origin, and adds an auxiliary basis as in \eqref{mfs},
\be
u \; = \; {\cal S}^\tbox{near}_\pO\tau + \sum_{j=1}^M \xi_j \phi_j
~,
\qquad
\mbox{where }\;
({\cal S}^\tbox{near}_\pO\tau)(\xx)
\;\; := \!\!
\sum_{m,n\in\{-1,0,1\}} \int_\pO G(\xx,\yy+m\ex+n\ey) \tau(\yy) ds_\yy
~.
\label{urep}
\ee
Our unknowns are the density $\tau$ and auxiliary vector $\xi$.
Substituting \eqref{urep} into the Neumann boundary condition \eqref{bc2},
and using the exterior jump relation on the central copy ($m=n=0$) only,
gives our first block row,
\be
(-\half + D^{\tbox{near},T}_{\pO,\pO})\tau + \sum_{j=1}^M \xi_j \phi_j|_\pO
\;= \;0~,
\label{bie}
\ee
where, as before, the ``near'' superscript
denotes summation over source images as in \eqref{urep}.

The second block row arises as follows.
Consider the substitution of \eqref{urep}
into the first discrepancy equation \eqref{u1}: there are
nine source copies, each of which interacts with the $L$ and $R$ walls,
giving 18 terms. However, the effect of the rightmost six
sources on $R$ is cancelled by the effect of the leftmost six sources on
$L$, leaving only six terms, as in \cite[Fig.~4(a)--(b)]{qplp}.
All of these surviving terms involve {\em distant}
interactions (the distances exceed one period if $\Omega$ is contained
in $\cb$).
Similar cancellations occur in the remaining three equations
\eqref{j1}--\eqref{j2}.
The resulting four subblocks are
\bea
\sum_{n\in\{-1,0,1\}} ( S_{R,\pO-\ex+n\ey} - S_{L,\pO+\ex+n\ey} )\tau
+ \sum_{j=1}^M (\phi_j|_R-\phi_j|_L) \xi_j
&=& p_1
\label{row2a}
\\
\sum_{n\in\{-1,0,1\}} ( D\tr_{R,\pO-\ex+n\ey} - D\tr_{L,\pO+\ex+n\ey} )\tau
+ \sum_{j=1}^M \biggl(\bigl.\frac{\partial\phi_j}{\partial n}\bigr|_R-\bigl.\frac{\partial\phi_j}{\partial n}\bigr|_L\biggr) \xi_j
&=& 0
\label{row2b}
\\
\sum_{m\in\{-1,0,1\}} ( S_{U,\pO+m\ex-\ey} - S_{D,\pO+m\ex+\ey} )\tau
+ \sum_{j=1}^M (\phi_j|_U-\phi_j|_D) \xi_j
&=& p_2
\label{row2c}
\\
\sum_{m\in\{-1,0,1\}} ( D\tr_{U,\pO+m\ex-\ey} - D\tr_{D,\pO+m\ex+\ey} )\tau
+ \sum_{j=1}^M \biggl(\bigl.\frac{\partial\phi_j}{\partial n}\bigr|_U-\bigl.\frac{\partial\phi_j}{\partial n}\bigr|_D\biggr) \xi_j
&=& 0
~.
\label{row2d}
\eea
\begin{rmk}[Wall intersection] \label{r:intersect}
The cancellation of all near interactions in \eqref{row2a}--\eqref{row2d}
is due to the $3\times 3$ neighbor summation in the representation \eqref{urep}.
Furthermore this cancellation allows an accurate solution even when
$\Omega$ intersects $\pcb$,
without the need for specialized quadratures,
as long as $\Omega$
remains far from the boundary of the enclosing $3\times 3$ unit cell block.
Informally, the unit cell walls are ``invisible'' to the inclusions.
With an elongated $\Omega$ for which the
last condition does not hold, with a little bookkeeping
one could split $\pO$ into pieces that, after lattice translations,
lie far from the boundary of the enclosing $3\times 3$ unit cell block; we
leave this last case for future work.
\end{rmk}

\eqref{bie}--\eqref{row2d} form a set of coupled integral-algebraic equations,
where the only discrete aspect is that of the $\bigO(1)$ proxy points.%
\footnote{It would also be possible to write a purely continuous version
by replacing the proxy circle by a single-layer potential; however,
sometimes an intrinsically discrete basis $\{\phi_j\}$ is useful
\cite{qplp,mlqp}.}
It is natural to ask how the unit nullity of the BVP manifests itself in the
solution space for the pair $(\tau,\xi)$. Are there
pairs $(\tau,\xi)$ with no effect on $u$, enlarging the nullspace?
It turns out that the answer is no, and that the one-dimensional
nullspace (constant functions)
is spanned purely by $\xi$, as a little potential theory now shows.
%
\begin{lem}  
In the solution to \eqref{bie}--\eqref{row2d},
$\tau$ is unique.
\label{l:laptau}\end{lem}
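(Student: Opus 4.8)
The plan is to show that if $(\tau,\xi)$ and $(\tau',\xi')$ both solve the system \eqref{bie}--\eqref{row2d}, then the difference density $\sigma := \tau - \tau'$ and difference auxiliary vector $\eta := \xi - \xi'$ satisfy the homogeneous version of the system, and to deduce $\sigma \equiv 0$ from it by a potential-theoretic argument. So let $w := {\cal S}^\tbox{near}_\pO \sigma + \sum_{j=1}^M \eta_j \phi_j$; then $w$ is harmonic in $\cb\backslash\overline\Omega$, has vanishing exterior normal derivative on $\pO$ (from the homogeneous \eqref{bie}), and has zero discrepancy on all four wall pairs (from homogeneous \eqref{row2a}--\eqref{row2d}). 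By the same Cauchy-data continuation argument used to pass from \eqref{lap2}--\eqref{j2} to \eqref{lap}--\eqref{p2}, zero discrepancy means $w$ extends to a function on $\RR^2\backslash\overline{\Omega_\Lambda}$ that is genuinely periodic (and harmonic). By Proposition~\ref{p:lapnull} with $\mbf p = 0$, the only such $w$ is a constant; call it $c$.

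Next I would exploit the explicit near-plus-proxy structure of $w$. The proxy sum $\sum_j \eta_j \phi_j$ is harmonic in all of $\cb$ (its sources $\yy_j$ lie outside at radius $\rp > r_\cb$), hence in particular across $\pO$. The near single-layer part ${\cal S}^\tbox{near}_\pO \sigma$ is harmonic off the $3\times3$ block of copies of $\pO$, continuous across $\pO$, and its normal-derivative jump across $\pO$ is exactly $\sigma$ (the central copy contributing the jump, the other eight copies being smooth near $\pO$). Inside $\Omega$, write $w_\tbox{int}$ for the restriction of $w$ to $\Omega$: it is harmonic there. On $\pO$ from outside, $w = c$ and $w_n^+ = 0$. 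Using the jump relation \eqref{JR} for the single-layer part together with the smoothness of the eight image copies and of the proxy sum across $\pO$, the interior and exterior normal derivatives differ by $\sigma$: specifically $w_n^- = w_n^+ - \sigma = -\sigma$ on $\pO$ (with an appropriate sign convention for the normal, which I would fix to match \eqref{JR}).

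Now I would run Green's first identity on $\Omega$ for the harmonic function $w_\tbox{int}$: $0 = \int_\Omega |\nabla w|^2 = \int_\pO w\, w_n^-$, where the normal here is the one pointing out of $\Omega$. Since $w = c$ on $\pO$ from the interior (continuity of the single-layer potential and of the proxy sum), this is $c \int_\pO w_n^- = -c\int_\pO \sigma$. Separately, Green's identity on the exterior region of the unit cell (or simply the zero-flux balance already used in Prop.~\ref{p:lapnull}: the wall fluxes cancel by periodicity of $w$, and $w_n^+ = 0$ on $\pO$) shows $\int_\pO \sigma = \int_\pO (w_n^+ - w_n^-) = -\int_\pO w_n^-$ must be consistent; more directly, applying Green's identity to $w\equiv c$ on $\cbo$ gives $0 = \int_\pcb c\, w_n - \int_\pO c\, w_n^+ = 0$, which is automatic and gives no information, so the real content is the interior computation. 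From $\int_\Omega|\nabla w|^2 = 0$ we get $w_\tbox{int} \equiv c$, hence $w_n^- \equiv 0$ on $\pO$, hence $\sigma = w_n^+ - w_n^- = 0$.

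I expect the main obstacle to be bookkeeping the signs and the precise jump relation: one must be careful that the "near" superscript leaves exactly the central copy contributing to the jump across $\pO$ while the eight neighbors and the proxy functions are smooth there, and that the normal conventions in \eqref{JR} and in Green's identity are compatible. A secondary subtlety is justifying that $w$, defined piecewise by the representation, really is the same harmonic function on $\cb\backslash\overline\Omega$ as its periodic extension — this is exactly the Cauchy-data continuation remark already granted in the text, so I would simply cite it. Note the lemma asserts only that $\tau$ is unique, not $\xi$; indeed $\eta$ need not vanish, since by Theorem~\ref{t:Qconv} and the nullity of the empty-cell problem the proxy representation of the constant function $c$ in $\cb$ has a (bounded) nonzero coefficient vector, and adding any such $\eta$ with the constant it generates, while subtracting that constant's effect — wait, more carefully: once $\sigma = 0$, the homogeneous first block row forces $\sum_j \eta_j \phi_j|_\pO$ to have vanishing normal derivative on $\pO$, and the homogeneous second block row forces it to have zero discrepancy, so $\sum_j \eta_j\phi_j$ is a constant in $\cb$; this constant is precisely the residual additive freedom of Prop.~\ref{p:lapnull}, consistent with a nontrivial $\eta$ spanning the one-dimensional nullspace. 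So the proof should close cleanly with $\tau$ unique and the nullspace carried entirely by $\xi$, as claimed.
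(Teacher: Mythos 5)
Your proposal follows essentially the same route as the paper's proof: take the difference $(\sigma,\eta)$, note that the representation $w$ solves the homogeneous BVP, invoke Prop.~\ref{p:lapnull} to conclude $w$ is constant in $\cbo$, use continuity of the single-layer potential across $\pO$ to see that the interior boundary data is constant, conclude $w$ is constant in $\Omega$, and then use the jump relation \eqref{JR} to kill $\sigma$. The one place you deviate is in concluding that $w$ is constant inside $\Omega$: the paper simply cites uniqueness of the interior Dirichlet problem (constant boundary data $\Rightarrow$ constant harmonic function), whereas you take a detour through Green's first identity. That detour, as written, has a small gap: you assert $\int_\Omega |\nabla w|^2 = 0$ but never close the chain. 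After Green's identity you have $\int_\Omega|\nabla w|^2 = \int_\pO w\,w_n^- = c\int_\pO w_n^-$, and to conclude this is zero you need $\int_\pO w_n^- = 0$, which follows immediately from $\Delta w = 0$ in $\Omega$ via the divergence theorem — a one-line fact you got tangled around in the "separately..." passage and never actually stated. Also note a sign slip: from \eqref{JR}, $w_n^+ - w_n^- = -\sigma$, so $\sigma = w_n^- - w_n^+$, not $w_n^+ - w_n^-$ — harmless here since both limits vanish, but worth keeping straight. Your closing remarks about $\eta$ being allowed to be nonzero and carrying the one-dimensional nullspace are correct and match the paper's framing.
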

\begin{proof}
Let $(\tau,\xi)$ be the difference between any two solutions to
\eqref{bie}--\eqref{row2d}.
Let $v$ be the representation \eqref{urep} using this $(\tau,\xi)$,
both in $\cbo$ but also inside $\Omega$.
Then by construction $v$ is a solution to the homogeneous
BVP \eqref{lap2}--\eqref{j2}, i.e.\ with $p_1=p_2=0$, thus by
Prop.~\ref{p:lapnull}, $v$ is constant in $\cbo$.
Thus by the continuity of the single-layer potential \cite[Theorem~6.18]{LIE},
the interior limit of $v$ on $\pO$ is constant.
However, $v$ is harmonic in $\Omega$, so $v$ is constant in $\Omega$.
By \eqref{JR}, $v_n^+ - v_n^- = -\tau$, but we have just shown that
both $v_n^+$ and $v_n^-$ vanish, so $\tau\equiv 0$.
\end{proof}

\subsubsection{Discretization of the ELS}

We discretize \eqref{bie},
the first 
row of the system,
using a set of quadrature nodes $\{\xx_i\}_{i=1}^N$ on $\pO$
and weights $\{w_i\}_{i=1}^N$ such that
$$
\int_\pO f(\yy) ds_\yy \; \approx \; \sum_{i=1}^N w_i f(\xx_i)
$$
holds to high accuracy for smooth functions $f$.
In practice, when $\pO$ is parametrized by a $2\pi$-periodic function
$\xx(t)$, $0\le t<2\pi$, then using the periodic trapezoid rule in $t$
gives $\xx_i=\xx(2\pi i/N)$ and $w_i = (2\pi/N)\|\xx'(2\pi i/N)\|$.
Since $D\tr$ has a smooth kernel, we apply
Nystr\"om discretization \cite[Sec.~12.2]{LIE} to the integral
equation \eqref{bie} using these nodes, to get our first block row
\be
A \btau + B \xi \; = \; 0~,
\label{row1}
\ee
where $\btau = \{\tau_i\}_{i=1}^N$
is a vector of density values at the nodes, and
$A\in\RR^{N\times N}$ has entries
\be
A_{ij} \; = \;
-\half \delta_{ij} \; + \!\!
\sum_{m,n\in\{-1,0,1\}} \!\!
\frac{\partial G(\xx_i,\xx_j+m\ex+n\ey)}{\partial \nn^{\xx_i}} w_j~,
\label{Anyst}
\ee
where $\delta_{ij}$ is the Kronecker delta.
Here, for entries $i=j$ the standard diagonal limit of the kernel
$\partial G(\xx_i,\xx_i)/\partial \nn^{\xx_i} = -\kappa(\xx_i)/4\pi$
is needed, where
$\kappa(\xx)$ is the signed curvature at $\xx\in\pO$.
The matrix $B\in\RR^{N \times M}$ has entries $B_{ij} = \partial \phi_j(\xx_i)/
\partial \nn^{\xx_i}$.

For the second block row \eqref{row2a}--\eqref{row2d} we use
the above quadrature for the source locations of the operators,
and enforce the four equations on the wall collocation nodes
$\xx_{iL}$, $\xx_{iL}+\ex$, $\xx_{iD}$, $\xx_{iD}+\ey$, $i=1,\dots,m$,
to get
\be
C \btau + Q \xi \; = \; \mbf{g}~,
\label{row2}
\ee
with the macroscopic driving $(p_1,p_2)$ encoded by the right-hand side vector
\be
\mbf{g} \;=\; [p_1\mbf{1}_m;\mbf{0}_m;p_2\mbf{1}_m;\mbf{0}_m] \;\in\; \RR^{4m}
~,
\label{gg}
\ee
where $\mbf{1}_m\in\RR^m$ is the vector of ones.
The matrix $Q$ is precisely as in \eqref{Qsys}.
Rather than list formulae for all four blocks in $C=[C_1;C_2;C_3;C_4]$,
we have
$(C_1)_{ij} = \sum_{n\in\{-1,0,1\}} \bigl(
G(\xx_{iL}+\ex,\xx_j-\ex+n\ey) - G(\xx_{iL},\xx_j+\ex+n\ey)
\bigr) w_j
$, with the others filled analogously.
Stacking the two block rows \eqref{row1} and \eqref{row2} gives the
$(N+4m)$-by-$(N+M)$ extended linear system \eqref{ELS},
which we emphasize is just
a standard discretization of the BVP conditions \eqref{bc2}--\eqref{j2}.

For small problems ($N$ less than a few thousand), the ELS is most simply
solved by standard dense direct methods; for larger problems a
Schur complement must be taken in order to solve iteratively, as presented
in Section~\ref{s:schur}.
First we perform a numerical test of the direct method.

\subsection{Numerical tests using direct solution of the ELS}
\label{s:lapnum}

{\bf Example 1.}
We define a smooth ``worm'' shaped inclusion
which crosses the unit cell walls $L$ and $R$ by $\xx(t) = 
(0.7 \cos t, 0.15 \sin t + 0.3 \sin (1.4 \cos t))$.
The solution of the periodic Neumann Laplace BVP
with external driving $\mbf{p} = (1,0)$ is shown in Fig.~\ref{f:ELS}(a).
Here $u$ is evaluated via \eqref{urep}
both inside $\cb$ (where it is accurate), and, to show the nature of
the representation, out to the proxy circle (where it is certainly
inaccurate). Recall that the proxy sources must represent
the layer potentials due to the infinite lattice of copies which excludes the
$3\times 3$ central block.
Note that two tips of copies in this set slightly penetrate the
proxy circle, violating the condition in Thm.~\ref{t:Qconv} that the
function the proxy sources represent be analytic in the closed
ball of radius $\rp$.
We find in practice that such slight geometric violations do not lead to
problematic growth in $\|\xi\|$, but that larger
violations can induce a large $\|\xi\|$ which limits achievable accuracy.
Panel (c) shows the convergence of errors in $u$ (at the pair of points shown)
to their numerically-converged value, fixing converged values for $M$ and $m$.
Convergence to around 13 digits (for $N=140$, $M=80$) is apparent,
and the solution time is 0.03 seconds.

As an independent verification of the method,
we construct a known solution to a slightly generalized
version of \eqref{lap2}--\eqref{j2},
where \eqref{bc2} is replaced by {\em inhomogeneous} data
$u_n = f$ and a general discrepancy $g$ is allowed.
We choose the known solution
$$
u_\tbox{ex}(\xx) \; = \; \sum_{m,n\in\{-2,-1,\dots,2\}} \nn_0 \cdot \nabla
G(\xx,\yy_0+m\ex+n\ey)
~,
$$
where the central
dipole has direction $\nn_0$, and location $\yy_0$ chosen inside
$\Omega$ and far from its
boundary (so that the induced data $f$ is smooth).
The grid size of $5\times 5$ (some of which is shown by
$*$ symbols in Fig.~\ref{f:ELS}(a)) is chosen so
that $g$ is sufficiently smooth (which requires at least $3\times 3$), and
so that the periodizing part $\xi$ is nontrivial.
From $u_\tbox{ex}$ the right-hand side functions $f$ and $g$ are then evaluated
at nodes,
the ELS solved directly, the numerical solution \eqref{urep} evaluated,
and the difference at two points compared to its known exact value.
The resulting $N$-convergence to 13 digits
is shown in Fig.~\ref{f:ELS}(c).
The convergence rate is slower than before, due to the unavoidable
closeness of the dipole source to $\pO$.

Finally, the grey lines in panel (e) show that, as with $Q$,
there is one singular value of $E$ which is much smaller than
the others, reflecting the unit nullity of the underlying BVP
(Prop.~\ref{p:lapnull}). Also apparent is the fact that, despite the
exponential ill-conditioning,
the solution norm remains bounded once $M$-convergence has occurred.

\bfi 
\raisebox{-2.7in}{\ig{width=3in}{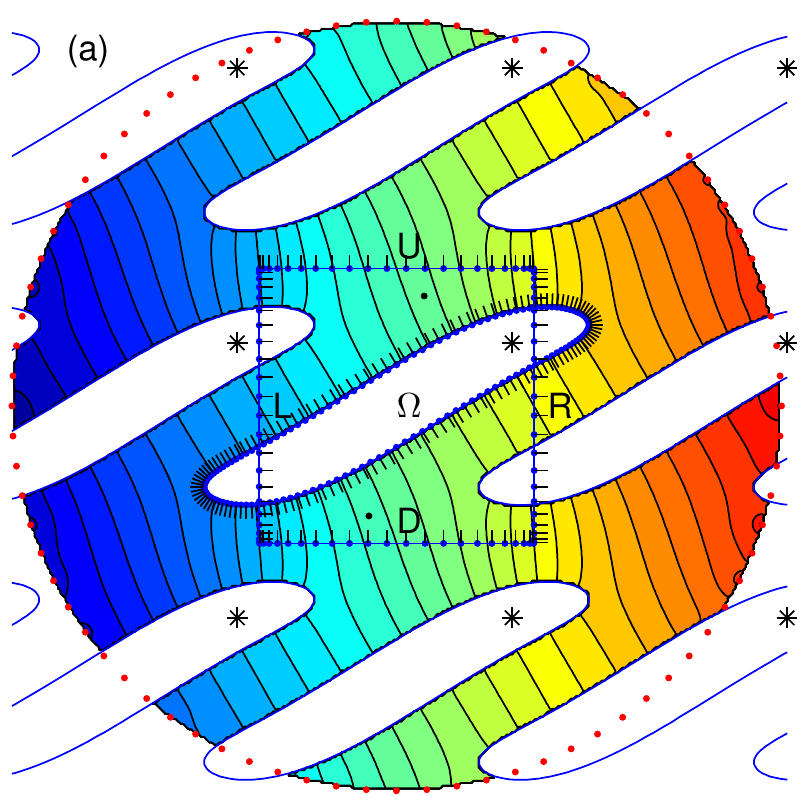}}
\qquad
(b)\;\raisebox{-2.7in}{\ig{width=2.8in}{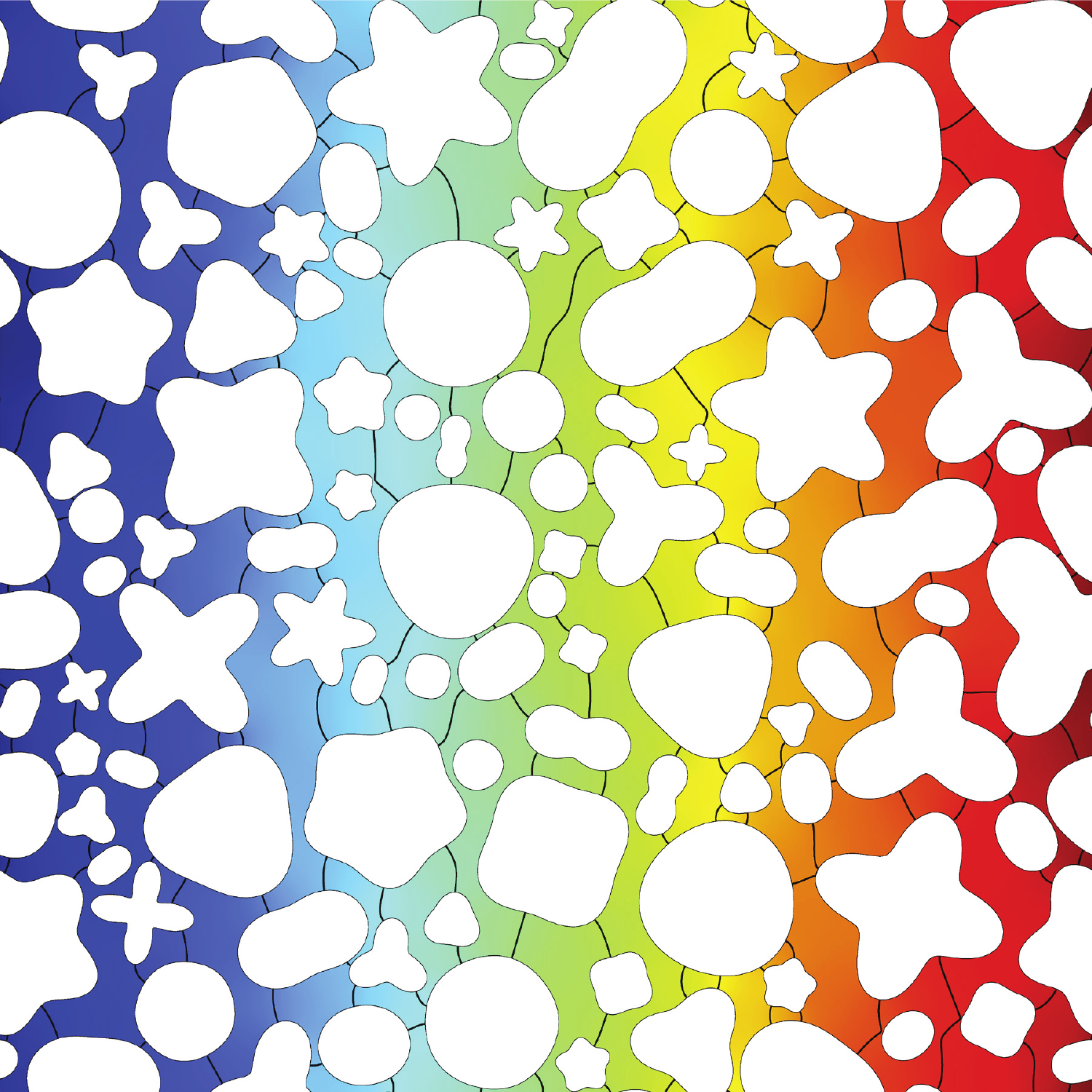}}
\ig{width=2.1in}{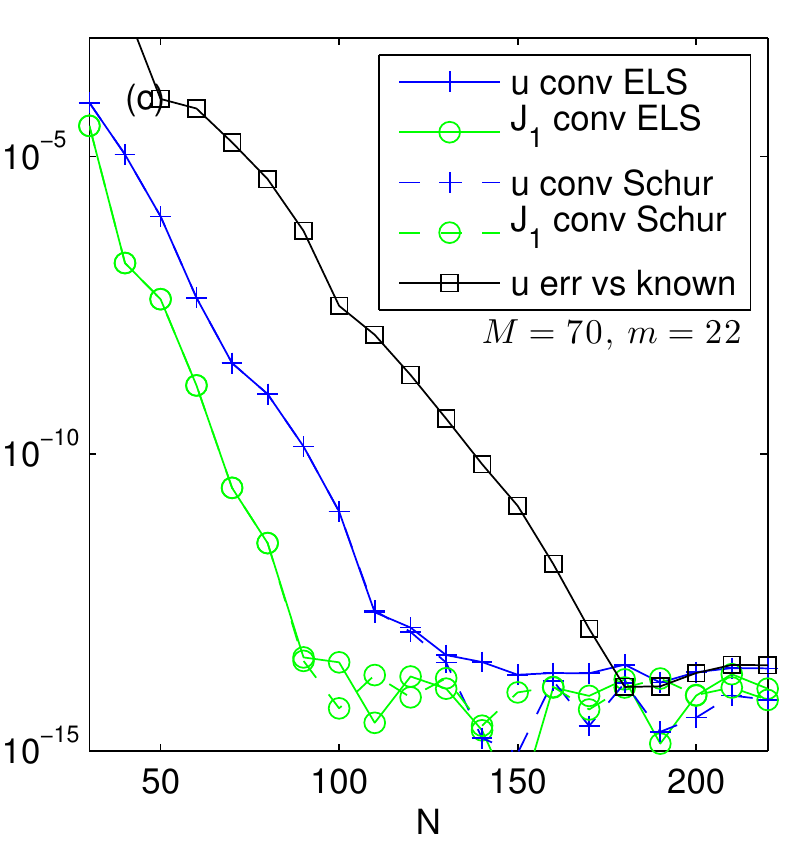}
\ig{width=2.1in}{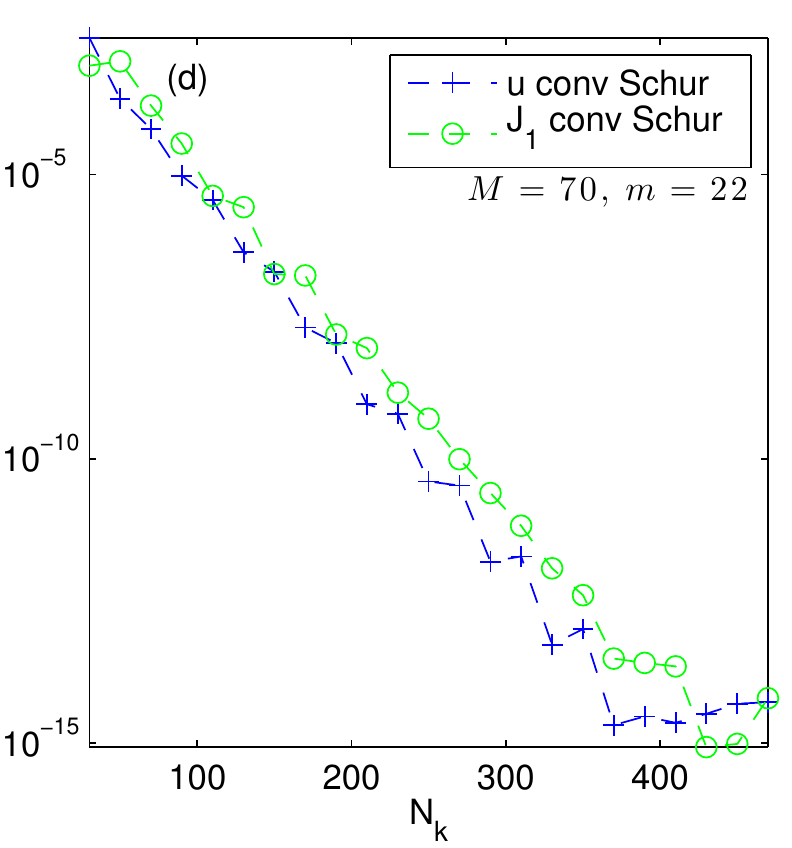}
\ig{width=2.1in}{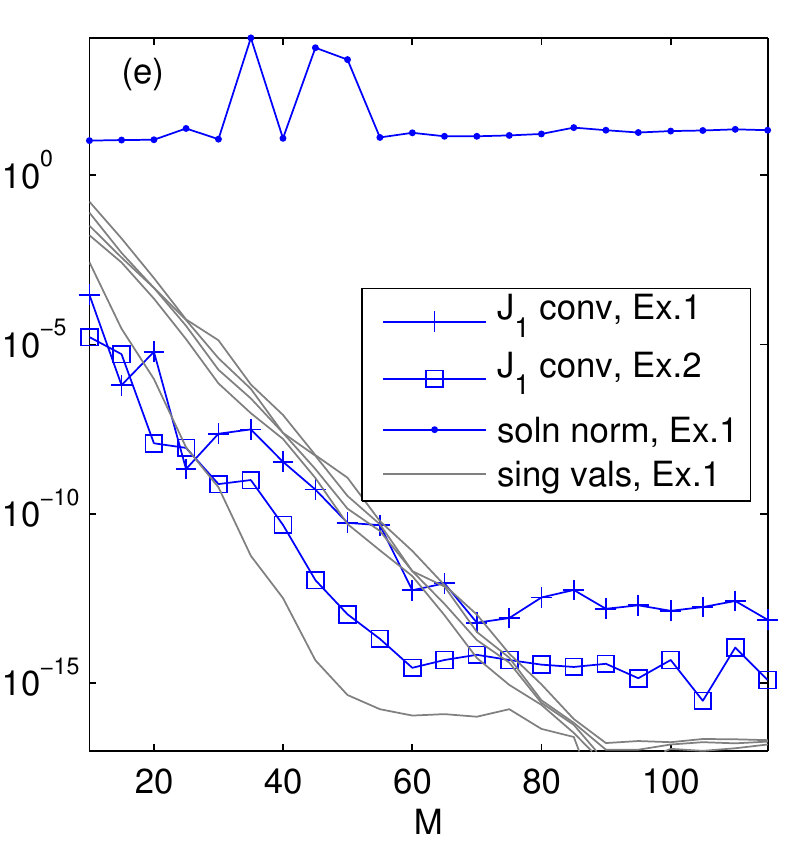}
\ca{Periodic Laplace Neumann tests driven by external driving
$\mbf{p} = (1,0)$.
(a) Solution potential $u$ contours for ``worm'' inclusion (Example 1),
with $N=140$, $M=80$, and $\rp=1.4$.
The $m=22$ nodes per wall and 
nodes on $\pO$ are also shown (with normals),
proxy points (red dots), test points (two black dots),
and $3\times 3$ grid of dipoles generating a known solution (* symbols).
The representation for $u$ is only accurate inside the unit cell.
(b) Solution with $K=100$ inclusions 
(Example 2), with $N_k=400$ unknowns per inclusion,
by iterative solution of \eqref{pschur}.
A single unit cell is shown.
(c)
$N$-convergence of error in difference of $u$ at the two test points,
and of flux $J_1$ computed via \eqref{J1trick},
relative to their values at $N=230$ ($M=80$ is fixed), for Example 1.
Squares show error convergence in $u$ (difference at the test points)
in the case of known $u_\tbox{ex}$ due to the dipole grid.
Solid lines are for direct solution of the ELS, 
dashed lines for the iterative solution of \eqref{pschur}.
(d) Convergence with $N_k$ for Example 2, for
pointwise $u$ (+ symbols) and flux $J_1$ (circles).
%
(e) $M$-convergence of flux $J_1$ error,
for Example 1 ($+$ signs) and Example 2 (squares), with other
parameters converged.
For $K=1$ the lowest six singular values of $E$ are also shown (grey lines),
and the solution norm $\|[\btau;\xi]\|_2$ (points).
}{f:ELS}
\efi

\subsection{Schur complement system and its iterative solution}
\label{s:schur}

When $N$ is large, solving the ill-conditioned rectangular ELS
\eqref{ELS} is impractical.
We would like to use a Schur complement in the style of \eqref{Aper}
to create an equivalent $N\times N$ system, which
we do in Sec.~\ref{s:equiv}.
Furthermore, in order to use Krylov subspace iterative methods with known
convergence rates,
we would like to remove the nullspace to make this well-conditioned,
which we do in Sec.~\ref{s:wellcond}.
We will do both these tasks
via low-rank perturbation of certain blocks of \eqref{Aper}
before applying the Schur complement.

In what follows, $Q^+$ is the pseudoinverse \cite[Sec.~5.5]{golubvanloan} of $Q$,
i.e.\ the linear map that recovers a small-norm solution (if one exists)
to $Q\xi = \mbf{g}$ via $\xi = Q^+ \mbf{g}$.
The obstacle to using \eqref{Aper} as written is that $Q$ inherits a 
consistency condition from the
empty BVP so that $Q$ has one smooth vector in its left null-space
($\mbf{w}$; see \eqref{wQ}).
However the range of $C$ does not respect this condition,
thus $Q^+ C$ has a huge 2-norm (which we find numerically is at least
$10^{16}$).
We first need to show that the range of a rank-deficient matrix
$Q$ may be enlarged by a rank-$k$ perturbation,
a rectangular version of results about singular square matrices in
\cite{sifuentes2014randomized}.

%

\begin{lem}  
Let $Q\in\RR^{m\times n}$ have a $k$-dimensional nullspace.
Let $R \in\RR^{n\times k}$ have full-rank projection onto $\Nul Q$,
i.e.\ if $N$ has columns forming a basis for $\Nul Q$
then $R\tr N \in \RR^{k\times k}$ is invertible.
Let $V\in\RR^{m\times k}$ be arbitrary.
Then  $\Ran (Q + V R\tr) \supset \Ran Q \oplus \Ran V$,
i.e.\ the range now includes that of $V$.
\label{l:onesmatrix}\end{lem}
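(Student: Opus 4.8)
The plan is to show the two inclusions $\Ran Q \subset \Ran(Q+VR\tr)$ and $\Ran V \subset \Ran(Q+VR\tr)$ separately, the second being where the full-rank hypothesis on $R\tr N$ does the work. For the first inclusion, take any $\mbf{y}\in\Ran Q$, say $\mbf{y}=Q\mbf{x}$. Since $R\tr N$ is invertible, I can correct $\mbf{x}$ by an element of $\Nul Q$ without changing $Q\mbf{x}$: writing a general such element as $N\mbf{c}$, I want to choose $\mbf{c}\in\RR^k$ so that $R\tr(\mbf{x}+N\mbf{c})=\mbf{0}$, which is possible (and uniquely so) because $R\tr N$ is invertible, namely $\mbf{c} = -(R\tr N)^{-1}R\tr\mbf{x}$. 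Then $\mbf{x}':=\mbf{x}+N\mbf{c}$ satisfies $Q\mbf{x}'=Q\mbf{x}=\mbf{y}$ and $R\tr\mbf{x}'=\mbf{0}$, so $(Q+VR\tr)\mbf{x}' = Q\mbf{x}' = \mbf{y}$. Hence $\mbf{y}\in\Ran(Q+VR\tr)$.

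For the second inclusion, I observe that $(Q+VR\tr)$ applied to $N\mbf{c}$ gives $QN\mbf{c} + VR\tr N\mbf{c} = V(R\tr N)\mbf{c}$, since $QN=0$. As $\mbf{c}$ ranges over $\RR^k$ and $R\tr N$ is invertible, $(R\tr N)\mbf{c}$ ranges over all of $\RR^k$, so $V(R\tr N)\mbf{c}$ ranges over $\Ran V$. Thus $\Ran V \subset \Ran(Q+VR\tr)$. Combining the two inclusions gives $\Ran Q + \Ran V \subset \Ran(Q+VR\tr)$, which (since $\Ran Q \oplus \Ran V$ in the statement just denotes the sum, or is understood as a sum of subspaces) is the claimed containment.

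The only genuinely load-bearing hypothesis is that $R\tr N$ is invertible; it is used twice, once to solve for the correction $\mbf{c}$ in the first inclusion and once to conclude surjectivity of $\mbf{c}\mapsto V(R\tr N)\mbf{c}$ onto $\Ran V$ in the second. Everything else is bookkeeping with the relations $QN = 0$ and linearity. I do not expect any real obstacle here — the proof is short and purely linear-algebraic — but the one point to state carefully is that the two-step correction in the first inclusion is legitimate, i.e.\ that modifying $\mbf{x}$ by $N\mbf{c}\in\Nul Q$ leaves $Q\mbf{x}$ untouched, which is immediate from $N\mbf{c}\in\Nul Q$. It may also be worth remarking (though not strictly needed) that no assumption of linear independence between $\Ran Q$ and $\Ran V$ is made, so the ``$\oplus$'' in the statement should be read as a sum rather than an internal direct sum; the containment holds regardless.
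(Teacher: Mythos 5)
Your proof is correct and is essentially the same as the paper's: the paper just writes the explicit solution $x = x_0 + N(R\tr N)^{-1}(\alpha_0 - R\tr x_0)$ to $(Q+VR\tr)x = Qx_0 + V\alpha_0$ in one line, whereas you have decomposed that same formula into its two cases ($\alpha_0 = 0$ giving your first inclusion, $x_0 = 0$ giving your second) and then used the fact that the range is a subspace to recombine. Both arguments pivot on the invertibility of $R\tr N$ in exactly the same way, so this is a reorganization of the paper's proof rather than a different route.
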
            
\begin{proof}
We need to check that $(Q+VR\tr)x = Q x_0 + V\alpha_0$
has a solution $x$ for all given pairs $x_0\in\RR^n$, $\alpha_0\in\RR^k$.
Recalling that $R\tr N$ is invertible,
by substitution one may check that
$x = x_0 + N(R\tr N)^{-1} (\alpha_0 - R\tr x_0)$ is an explicit such solution.
\end{proof}

\begin{rmk} With additional conditions $m\ge n$, and that
$V$ has full-rank projection onto a part of $\Nul Q\tr$
(i.e.\ if $W$ has columns forming a basis for a $k$-dimensional
subspace of $\Nul Q\tr$, then $W\tr V \in \RR^{k\times k}$ is invertible),
we get incidentally that $Q+VR\tr$ has trivial nullspace
(generalizing \cite[Sec.~3]{sifuentes2014randomized}).
The proof is as follows.
Let $x\in\RR^n$ solve the homogeneous equation $(Q + V R\tr)x = 0$.
Then $0 = W\tr Q x = -(W\tr V) R\tr x$, but $W\tr V$ is invertible, so
$R\tr x = 0$. Thus the homogeneous equation becomes $Qx =0$, which means
there is an $\alpha \in \RR^k$ such that $x = N\alpha$.
Thus $R\tr N \alpha = 0$, but $R\tr N$ is invertible, so that $\alpha=0$,
so $x=0$. 
\end{rmk}

We also need the fact that a block-column operation allows $Q$ to be perturbed
as above while changing the ELS solution space in a known way.
The proof is simple to check.
\begin{pro}
Let $A$, $B$, $C$ and $Q$ be matrices, and let $P$ be a matrix
with as many rows as $A$ has columns and as many columns as $B$ has rows.
Then the pair $(\ttau, \xi)$
solves the block system
\be
\mt{A}{B + AP}{C}{Q+CP}\vt{\ttau}{\xi} \; = \; \vt{\mbf{0}}{\mbf{g}}~
\label{gary}
\ee
if and only if the pair $(\bm\tau, \xi)$, with $\bm\tau = \ttau + P \xi$,
solves the original ELS \eqref{ELS}.
\label{p:gary}\end{pro}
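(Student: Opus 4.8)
The plan is to recognize \eqref{gary} as the original ELS \eqref{ELS} precomposed with an invertible block unit-triangular change of variables, which makes the claimed equivalence immediate. Set $T := \mt{I}{P}{0}{I}$; the hypothesis on the shape of $P$ is exactly what makes $T$ square and makes $AP$ and $CP$ well-defined with the same shapes as $B$ and $Q$, so that $B+AP$ and $Q+CP$ are meaningful and $P\xi$ has the length of $\btau$. First I would verify the two elementary identities
$$
\mt{A}{B}{C}{Q}\,T \;=\; \mt{A}{B+AP}{C}{Q+CP},
\qquad
T\vt{\ttau}{\xi} \;=\; \vt{\ttau + P\xi}{\xi} \;=\; \vt{\btau}{\xi},
$$
both of which follow simply by expanding the block products; no cancellation or nontrivial algebra is involved.

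Given these, the left-hand side of \eqref{gary} equals $\mt{A}{B}{C}{Q}\,T\vt{\ttau}{\xi} = \mt{A}{B}{C}{Q}\vt{\btau}{\xi}$, so $(\ttau,\xi)$ solves \eqref{gary} if and only if $(\btau,\xi)$ solves \eqref{ELS}. To obtain the ``if and only if'' in full strength — that every solution of one system arises from a solution of the other — I would observe that the correspondence $(\ttau,\xi)\mapsto(\ttau+P\xi,\xi)=(\btau,\xi)$ is a bijection, with inverse $(\btau,\xi)\mapsto(\btau-P\xi,\xi)$, equivalently $T^{-1}=\mt{I}{-P}{0}{I}$. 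That closes the argument.

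Alternatively, one can bypass $T$ entirely and just read off the two block rows of \eqref{gary}: the first is $A\ttau+(B+AP)\xi = A(\ttau+P\xi)+B\xi = A\btau+B\xi$, and the second is $C\ttau+(Q+CP)\xi = C\btau+Q\xi$, which are precisely the two block rows of \eqref{ELS} evaluated at $(\btau,\xi)$; since fixing $\xi$ makes $\btau=\ttau+P\xi$ and $\ttau=\btau-P\xi$ mutually determined, the equivalence follows. There is no real obstacle here — the proof is essentially a one-line change of variables — and the only point requiring a moment's care is the bookkeeping of matrix dimensions, which the stated hypothesis on $P$ handles.
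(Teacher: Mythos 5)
Your proof is correct, and it is exactly the kind of verification the paper has in mind: the paper itself offers no proof beyond the remark "The proof is simple to check," so the expected argument is precisely the block unit-triangular change of variables (or, equivalently, your second, row-by-row computation). Both your identities check out, and the observation that $T=\mt{I}{P}{0}{I}$ is invertible with explicit inverse $\mt{I}{-P}{0}{I}$ cleanly delivers the "if and only if."
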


\subsubsection{An equivalent square system preserving the nullspace}  
\label{s:equiv}

Armed with the above, a method to ``fold'' the
Neumann Laplace ELS into an equivalent
square system is as follows:
\ben
\item Set the proxy coefficient vector $R = (1/M)\mbf{1}_M$,
and the discrete density vector $H = \mbf{1}_N$.
Create low-rank perturbed matrix blocks
$\tilde B := B + A H R\tr$ and $\tilde Q := Q + C H R\tr$.
\item Solve for the vector $\ttau$ in the $N\times N$ Schur
complement linear system
\be
(A - \tilde B \tilde Q^+ C)\ttau \; = \; -\tilde B \tilde Q^+ \mbf{g}~.
\label{Gschur}
\ee
More precisely, since (due to the numerical ill-conditioning in $\tilde Q$)
multiplying by $\tilde Q^+$ would lose accuracy
due to rounding error, instead solve the small systems
$\tilde Q X = C$  for $X$, and $\tilde Q \mbf{y} = \mbf{g}$ for $\mbf{y}$,
heeding Remark~\ref{r:linsolve}.
From them, build $A - \tilde B X$ and $-\tilde B \mbf{y}$,
which are respectively the
system matrix and right-hand side for \eqref{Gschur}.
This large square
system may then be solved iteratively (see section~\ref{s:multi}).
\item Recover the proxy coefficients via $\xi = \mbf{y} - X \ttau$.
\item Recover the density via $\bm\tau = \ttau + HR\tr \xi$.
\een

Note that in Step 1 the prefactor $1/M$ leads to correct quadrature scaling,
so that $HR\tr$ has similar 2-norm to the other matrices
(also recommended in \cite{sifuentes2014randomized}).

\begin{thm}   
Let $\mbf{g}$ encode the driving as in \eqref{gg}.
Then for all sufficiently large $N$, $M$, and $m$,
any pair $(\btau,\xi)$ produced by the
above procedure performed in exact arithmetic solves the
original ELS \eqref{ELS}.
\label{t:gary}\end{thm}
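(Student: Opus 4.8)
The plan is to peel the procedure apart algebraically and reduce it to the \emph{perturbed} ELS produced by the block-column operation of Proposition~\ref{p:gary}. Writing $P := HR\tr$, Step~1 of the procedure builds exactly $\tilde B = B + AP$ and $\tilde Q = Q + CP$, so by Proposition~\ref{p:gary} it is enough to show that the pair $(\ttau,\xi)$ emerging from Steps~2--3 solves
$$
\mt{A}{\tilde B}{C}{\tilde Q}\vt{\ttau}{\xi} \;=\; \vt{\mbf 0}{\mbf g}~;
$$
Step~4 then returns $\btau = \ttau + P\xi$, and $(\btau,\xi)$ solves the original ELS~\eqref{ELS}.

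The verification of this perturbed ELS is the routine part. The system actually solved in Step~2 is $(A - \tilde B X)\ttau = -\tilde B\mbf y$, where $X$ solves $\tilde Q X = C$ and $\mbf y$ solves $\tilde Q\mbf y = \mbf g$; since Step~3 sets $\xi = \mbf y - X\ttau$, the first block row reads $A\ttau + \tilde B(\mbf y - X\ttau) = A\ttau + \tilde B\xi = 0$, which holds by construction. For the second block row, $C\ttau + \tilde Q\xi = C\ttau + \tilde Q\mbf y - \tilde Q X\ttau = C\ttau + \mbf g - C\ttau = \mbf g$, using $\tilde Q\mbf y = \mbf g$ and $\tilde Q X = C$. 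Crucially this works for \emph{any} solutions $X,\mbf y$ the solver returns --- so the pseudoinverse $\tilde Q^+$ in \eqref{Gschur} need only deliver some least-squares solution, which is what the basic solution of Remark~\ref{r:linsolve} does --- \emph{provided} those solutions exist. Hence the entire theorem collapses to the two range inclusions $\Ran C \subseteq \Ran\tilde Q$ and $\mbf g\in\Ran\tilde Q$.

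These I would obtain from Lemma~\ref{l:onesmatrix}. For $M$ and $m$ large enough, $Q$ inherits the unit nullity of the empty BVP (Proposition~\ref{p:lapemptynull}): its nullspace is spanned by the vector generating the constant field (numerically $\approx\mbf 1_M$, by the mean-value property of $\log$ on the proxy circle), and its left nullspace is spanned by the discrete consistency functional $\mbf w$ of \eqref{w}, by \eqref{wQ}. Since $R\tr\mbf 1_M = 1\ne 0$, Lemma~\ref{l:onesmatrix} applied to $\tilde Q = Q + (CH)R\tr$ with $CH = C\mbf 1_N$ gives $\Ran\tilde Q \supseteq \Ran Q \oplus \Span\{C\mbf 1_N\}$. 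Because $\Ran Q$ is the orthogonal complement of $\mbf w$, a column $c$ of $C$ lies in this sum as soon as $\mbf w\tr C\mbf 1_N\ne 0$; and $\mbf w\tr C\mbf 1_N$ is the quadrature approximation of $\int_L g_2\,ds + \int_D g_4\,ds$ for the discrepancy $g$ of ${\cal S}^{\tbox{near}}_{\pO}1$, which by the divergence theorem equals the net single-layer charge enclosed by $\cb$, i.e.\ $\pm\int_{\pO}1\,ds = \pm|\pO|\ne0$. This gives the first inclusion. The second is easier: the block forms \eqref{w} and \eqref{gg} give $\mbf w\tr\mbf g = 0$ directly, so $\mbf g\in\Ran Q\subseteq\Ran\tilde Q$ --- this is the one place the hypothesis on $\mbf g$ is used.

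The hard part will be the passage I treated cavalierly above: upgrading the numerically-observed statements ``$\Nul Q\approx\Span\{\mbf 1_M\}$'' and ``$\mbf w\tr Q\approx\mbf 0$'' to exact identities, so that $\Ran Q$ really is $\mbf w^\perp$ and Lemma~\ref{l:onesmatrix} applies on the nose. This is precisely what the qualifier ``for all sufficiently large $N$, $M$, and $m$'' is meant to absorb: as the discretizations are refined, the discrete nullspace and consistency structure of $Q$ converge to that of the continuous empty BVP of Proposition~\ref{p:lapemptynull}, and one must argue (or, as is customary in the MFS setting, simply grant) that past some threshold $Q$ has nullity and co-nullity exactly one with the stated generators, and that $\mbf g$ and each column of $C$ then lie in the relevant ranges. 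Everything else --- the reduction through Proposition~\ref{p:gary} and the two-line check of the perturbed ELS --- is bookkeeping.
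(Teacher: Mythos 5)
Your proof follows essentially the same route as the paper's: you reduce to consistency of the small systems $\tilde Q X = C$ and $\tilde Q\mbf y = \mbf g$ via Proposition~\ref{p:gary}, invoke Lemma~\ref{l:onesmatrix} with the observation that $R$ has nonzero projection onto $\Nul Q$ (because a constant single-layer density on the proxy circle yields a near-constant field in $\cb$), and close with Lemma~\ref{l:onesdens} to see that $\mbf w\tr CH \neq 0$. That is exactly the chain of lemmas the paper uses. Your extra check that $\mbf w\tr\mbf g = 0$ is a small improvement in explicitness (the paper defers this observation to the proof of Theorem~\ref{t:alex}). One caveat in your phrasing: $Q$ is $4m\times M$ with $4m>M$, so ``$\Ran Q$ is the orthogonal complement of $\mbf w$'' is dimensionally impossible; the correct statement is that $\Ran Q$ is (asymptotically, by Theorem~\ref{t:Qconv}) dense among discrete smooth discrepancies obeying $\mbf w\tr(\cdot)=0$, so that adding one $\mbf w$-nonorthogonal direction $CH$ yields a range dense among all smooth discrepancies. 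The paper's own wording (``the range of $\tilde Q$ includes all smooth vectors'', ``consistent for any $C$ and $\mbf g$'') is informal at the same spot, and this gap is precisely what the ``sufficiently large $N,M,m$'' qualifier, together with the quadrature-error caveat made explicit in the statement of Theorem~\ref{t:alex}, is meant to absorb --- you correctly flag this as the soft part.
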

\begin{proof}
By rotational invariance, a constant single-layer density on a circle generates
a constant potential inside, and 
inserting $R$ into \eqref{mfs} gives the
periodic trapezoid quadrature approximation to such a potential, thus
generates discrepancy near zero (in fact exponentially small in $M$).
Thus for all sufficiently large $M$, $R$ is not orthogonal to $\Nul Q$.
Applying Lemma~\ref{l:onesmatrix} with $k=1$ gives that the
range of $\tilde Q$ includes the discrepancy vector $V = CH$
produced by the constant density $H$.
Thus, to show that the range of $\tilde Q$ includes all smooth vectors,
i.e.\ that it does not have the consistency condition \eqref{wQ}, 
one needs to check that
$\mbf{w}\tr V= \mbf{w}\tr C H \neq 0$
which is done in Lemma~\ref{l:onesdens} below.
Thus the systems $\tilde Q X = C$ and $\tilde Q \mbf{y} = \mbf{g}$ are
consistent for any $C$ and $\mbf{g}$,
so that the Schur complement is well defined.
Finally, Prop.~\ref{p:gary}, using the rank-1 matrix $P = H R\tr$,
insures that step 4 recovers a solution to \eqref{ELS}.
\end{proof}

For the missing technical lemma,
we first need a form of Gauss' law, stating that
for any density $\tau$ on a curve $\pO$
the single-layer potential $v = {\cal S}_\pO\tau$ generates flux
equal to the total charge, i.e.\
\be
\int_{\partial \cal K} v_n = \int_\pO \tau\, ds
\label{Gauss}
\ee
where $\partial \cal K$ is the boundary of some open domain $\cal K$
containing $\pO$.
This may be proved by combining the jump relation
$v_n^+ - v_n^- = -\tau$ (from \eqref{JR}) with the fact that,
since $v$ is a Laplace solution in $\RR^2\backslash \pO$,
$\int v_n = 0$ taken over the boundaries of $\Omega$ and of
$\cb \backslash \overline{\Omega}$.

\begin{lem}
Let the matrix $C$ be defined as
in Section~\ref{s:lapbie}, and $\mbf{w}$ be as in \eqref{w}.
Then, for all $N$ and $m$ sufficiently large, $\mbf{w}\tr C \mbf{1}_N \neq 0$.
\label{l:onesdens}
\end{lem}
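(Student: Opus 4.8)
The plan is to interpret $\mbf{w}\tr C \mbf{1}_N$ as a discretization of a continuous quantity, compute that quantity exactly using the Gauss' law identity \eqref{Gauss}, and show it is nonzero; the discrete statement then follows for large $N$, $m$ by spectral accuracy of the underlying quadratures.

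First I would recall what $C$ and $\mbf{w}$ do. The density $\btau = \mbf{1}_N$ corresponds (up to quadrature scaling) to the constant density $\tau \equiv 1$ on $\pO$. Applying $C$ to this vector produces the discrepancy (the four stacked wall-difference functions, sampled at the collocation nodes) of the $3\times 3$-near single-layer potential $v := {\cal S}^\tbox{near}_\pO 1$ generated by the constant density; recall from \eqref{row2a}--\eqref{row2d} that the near cancellations mean $C\btau$ equals the discrepancy of this near sum, which (since the excluded far images contribute a genuinely periodic, hence zero-discrepancy, field only in the limit) is what it is. Left-multiplying by $\mbf{w}\tr$ applies the discrete consistency functional of Prop.~\ref{p:lapemptynull}, namely $\int_L (\text{2nd component}) + \int_D (\text{4th component})$, i.e.\ $\int_L (v_{nR}-v_{nL}) + \int_D(v_{nU}-v_{nD})$ where here $v$ is the $3\times3$ near sum. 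By the normal-sense bookkeeping of Prop.~\ref{p:lapemptynull}'s proof, this equals $\int_{\pcb} v_n$, the total outward flux of the near-sum potential through the unit cell walls.

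Next I would evaluate $\int_{\pcb} v_n$ exactly. The potential $v = {\cal S}^\tbox{near}_\pO 1 = \sum_{m,n\in\{-1,0,1\}} {\cal S}_{\pO+m\ex+n\ey} 1$ is a sum of nine single-layer potentials, each with total charge $\int_\pO 1\, ds = |\pO|$, the perimeter of $\pO$. Only the central copy's source curve $\pO$ lies inside $\cb$; the other eight copies are single-layer potentials whose source curves lie outside $\overline{\cb}$ (using the standing assumption that $\Omega$, and hence $\pO$, sits strictly inside $\cb$), so those eight are harmonic throughout $\cb$ and contribute zero net flux through $\pcb$. For the central copy, Gauss' law \eqref{Gauss} with ${\cal K} = \cb$ gives $\int_{\pcb} ({\cal S}_\pO 1)_n = \int_\pO 1\, ds = |\pO| \neq 0$. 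Hence the continuous quantity that $\mbf{w}\tr C\mbf{1}_N$ approximates equals $|\pO|$, which is strictly positive.

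Finally I would upgrade from the continuous value to the discrete inequality. The entries of $C$ are built from the smooth (distant) kernels $G$ and $\partial G/\partial\nn$ evaluated between $\pO$-copies and the walls, integrated against the Nystr\"om weights $w_j$; $\mbf{w}$ carries the Gauss--Legendre wall weights. So $\mbf{w}\tr C\mbf{1}_N$ is exactly the wall-quadrature-and-$\pO$-quadrature approximation of $\int_{\pcb}({\cal S}^\tbox{near}_\pO 1)_n\, ds = |\pO|$, with all integrands analytic on the relevant contours (the wall-to-source distances are bounded away from zero). By spectral convergence of the periodic trapezoid rule on $\pO$ and of Gauss--Legendre on the walls, $\mbf{w}\tr C\mbf{1}_N \to |\pO|$ as $N, m \to \infty$; since $|\pO| > 0$, it is nonzero for all sufficiently large $N$ and $m$, which is the claim. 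The only mildly delicate point—the main obstacle—is the bookkeeping that the discrete quantity really is the flux of the near-sum potential (keeping the wall-normal orientations of Fig.~\ref{f:geom}(a) straight, and confirming the eight outer copies drop out) rather than something with leftover boundary terms; once that identification is pinned down, the rest is Gauss' law plus quadrature convergence.
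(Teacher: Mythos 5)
Your proof is correct and is essentially the paper's argument: identify $\mbf{w}\tr C\mbf{1}_N$ as the quadrature approximation to the flux $\int_{\pcb}({\cal S}^\tbox{near}_\pO 1)_n\,ds$, apply Gauss' law \eqref{Gauss} noting that only the central of the nine source copies lies inside $\cb$ so the flux equals $|\pO|>0$, and then invoke quadrature convergence to conclude the discrete quantity is eventually nonzero. The paper's proof is a more compressed version of exactly this chain of reasoning.
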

\begin{proof}
Let $d = [d_1;d_2;d_3;d_4]$ be the discrepancy of the potential
$v = {\cal S}^\tbox{near}_\pO \tau$ for density $\tau\equiv 1$.
As used in the proof of Prop.~\ref{p:lapemptynull},
the flux (left hand side of \eqref{Gauss}) out of the unit cell $\cb$
equals $\int_L d_2 ds + \int_D d_4 ds$,
which by \eqref{Gauss} (and noting that only one of the
nine source terms in ${\cal S}^\tbox{near}_\pO$ lies within $\cb$)
equals the perimeter $\int_\pO 1 ds =|\pO|> 0$.
What we seek is the discretization of this statement about the PDE.
If $\mbf{d}$ is the discrepancy of $v$ sampled at the wall
nodes, then 
its 
quadrature approximation is $\mbf{d} \approx C \mbf{1}_N$,
and so (as discussed above \eqref{w}) the quadrature
approximation to $\int_L d_2 ds + \int_D d_4 ds$
is $\mbf{w}\tr \mbf{d} \approx \mbf{w}\tr  C \mbf{1}_N$.
Thus, as $N$ and $m$ tend to infinity,
the latter converges to $|\pO|$.
\end{proof}

Theorem~\ref{t:gary}
justifies rigorously one procedure to create a square system
equivalent to the ELS \eqref{ELS}.
However, by the equivalence in Prop.~\ref{p:gary}, the system matrix
$A - \tilde B X$ at the heart of the procedure is singular,
as it inherits the unit nullity of the ELS, which itself
derives from the unit nullity of the Laplace Neumann BVP.
Since the convergence of iterative solvers for singular
matrices is a subtle matter \cite{singgmres}, 
this motivates
the following improved variant which removes the nullspace.

\subsubsection{A well-conditioned square system} 
\label{s:wellcond}
The following simpler variant creates a non-singular square system
from the ELS; its proof is
more subtle. It is what we recommend for the iterative solution of the periodic Neumann Laplace problem, and test numerically:
\ben
\item Set the constant proxy coefficient vector $R = (1/M)\mbf{1}_M$, the constant discrete density vector $H = \mbf{1}_N$,
and $\tilde Q := Q + C H R\tr$.
\item Solve for the vector $\bm\tau$ in the $N\times N$ Schur
complement linear system
\be
\tilde A_\tbox{per} \btau := (A - B \tilde Q^+ C)\bm\tau \; = \;
-B \tilde Q^+ \mbf{g}~,
\label{pschur}
\ee
where, as before, one solves the small systems
$\tilde Q X = C$ and $\tilde Q \mbf{y} = \mbf{g}$,
to build the large system matrix $\tilde A_\tbox{per} = A - B X$
and right-hand side
$-B \mbf{y}$ for \eqref{pschur},
which may then be solved iteratively (see section~\ref{s:multi}).
\item Recover the proxy coefficients via $\xi = \mbf{y} - X \bm\tau$.
\een
\begin{thm} 
Let $\mbf{g}$ encode the driving as in \eqref{gg}.
Then for all $N$, $M$ and $m$ sufficiently large, the pair
$(\btau,\xi)$ produced by the above procedure is unique,
and solves the ELS \eqref{ELS}
with residual of order the quadrature error on boundaries.
\label{t:alex}\end{thm}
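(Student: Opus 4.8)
The plan is to run the three-step procedure of Section~\ref{s:wellcond} symbolically and verify, in order: (a) the small solves in Step~2 are consistent, so the procedure is well defined; (b) the Schur matrix $\tilde A_\tbox{per}=A-B\tilde Q^+C$ is nonsingular, so its output is unique; and (c) the recovered pair satisfies the ELS \eqref{ELS} with residual of order the boundary quadrature error. I would carry the exact parts of the argument on the semi-discrete integral-algebraic system \eqref{bie}--\eqref{row2d} (finitely many proxy unknowns, but continuous densities and wall conditions), where Green's identities hold exactly, and obtain the $\bigO(\mbox{quad.\ err.})$ statement from the convergence of the Nystr\"om/collocation discretization as $N,m\to\infty$. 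For (a): take $R=(1/M)\mbf 1_M$ and $V=C\mbf 1_N$. By Theorem~\ref{t:Qconv} applied to the constant potential, $R$ is (for $M$ large) not orthogonal to $\Nul Q$, so Lemma~\ref{l:onesmatrix} with $k=1$ gives $\Ran\tilde Q\supset\Ran Q\oplus\Span\{C\mbf 1_N\}$; since $\Ran Q$ is, up to the accuracy of Theorem~\ref{t:Qconv}, exactly the subspace annihilated by $\mbf w\tr$ (Prop.~\ref{p:lapemptynull} and \eqref{wQ}) while $\mbf w\tr C\mbf 1_N\neq 0$ (Lemma~\ref{l:onesdens}), $\Ran\tilde Q$ contains $\Ran C$ and the vector $\mbf g$, so $\tilde Q X=C$ and $\tilde Q\mbf y=\mbf g$ are solvable. (The remark following Lemma~\ref{l:onesmatrix}, with $W=\mbf w$, moreover gives $\Nul\tilde Q=\{0\}$.) Hence $X$, $\mbf y$, and $\xi=\mbf y-X\btau$ are all determined.

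Given $\btau$ solving \eqref{pschur} and $\xi=\mbf y-X\btau$, substituting back and using $\tilde Q=Q+C\mbf 1_N R\tr$ shows that the first block row $A\btau+B\xi=0$ holds exactly, while the second becomes $C\btau+Q\xi=\mbf g-c\,C\mbf 1_N$ with the \emph{scalar} $c:=R\tr\xi$. So the whole ELS residual collapses to the single term $-c\,C\mbf 1_N$: if $c=0$, then $(\btau,\xi)$ solves \eqref{bie}--\eqref{row2d} exactly. Running the same manipulation in the homogeneous case ($\mbf g=0$, $\xi:=-\tilde Q^+C\btau$) shows that any $\btau\in\Nul\tilde A_\tbox{per}$ yields a pair $(\btau,\xi)$ solving the homogeneous system \eqref{bie}--\eqref{row2d}, once $c=0$ is known; then Lemma~\ref{l:laptau} forces $\btau=0$, i.e.\ $\tilde A_\tbox{per}$ is nonsingular. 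This is exactly where Proposition~\ref{p:gary} cannot be applied --- the perturbation sits in $Q$ but not in $B$ --- which is the source of the extra subtlety relative to Theorem~\ref{t:gary}.

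It remains to show $c=0$, which I would do by conservation of charge. Let $u$ be the harmonic function represented by \eqref{urep} with this $(\btau,\xi)$. From $A\btau+B\xi=0$, the exterior normal derivative of $u$ vanishes on $\pO$, so $\int_\pO u_n^+=0$; and $u$ is harmonic inside $\Omega$, so $\int_\pO u_n^-=0$; the jump relation \eqref{JR} then forces $\int_\pO\btau=0$. On the other hand, the discrepancy of $u$ equals $C\btau+Q\xi=\mbf g-c\,C\mbf 1_N$, so, reading off the flux through $\pcb$ from the discrepancy exactly as in the proof of Proposition~\ref{p:lapemptynull}, $\int_\pcb u_n=\mbf w\tr(\mbf g-c\,C\mbf 1_N)=-c\,\mbf w\tr C\mbf 1_N=-c\,|\pO|$, using that $\mbf w\tr\mbf g=0$ for $\mbf g$ of the form \eqref{gg} and that $\mbf w\tr C\mbf 1_N\to|\pO|$ (Lemma~\ref{l:onesdens}). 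But Gauss' law \eqref{Gauss} identifies this same flux with the total charge enclosed by $\cb$, namely $\int_\pO\btau$ --- the other eight image copies of $\pO$ and every proxy point lie outside $\cb$, since $\rp>r_\cb$. Equating gives $-c\,|\pO|=\int_\pO\btau=0$, hence $c=0$, as $|\pO|>0$.

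In the fully discretized setting each of these identities holds only up to the boundary quadrature error, so $c=\bigO(\mbox{quad.\ err.})$ and the ELS residual $-c\,C\mbf 1_N$ is of that order; and for $N$, $M$, $m$ large the discretized $\tilde A_\tbox{per}$ --- a compact perturbation of $-\half I$ --- inherits the nonsingularity of its limiting operator, so the procedure returns a well-defined, unique pair $(\btau,\xi)$ solving \eqref{ELS} to that residual. The main obstacle is the charge-conservation step of the previous paragraph: pinning the scalar $c$ to zero is what uses the physical consistency $\mbf w\tr\mbf g=0$ together with Gauss' law, and is precisely what makes this ``well-conditioned'' variant nonsingular, in contrast to the procedure behind Theorem~\ref{t:gary}.
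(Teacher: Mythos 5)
Your proof is correct and follows essentially the same path as the paper: reduce the statement to showing the scalar $c := R\tr\xi$ vanishes, derive this from conservation of flux through $\pcb$ combined with the physical compatibility $\mbf{w}\tr\mbf{g}=0$ and $\mbf{w}\tr C\mbf{1}_N\neq 0$ (Lemma~\ref{l:onesdens}), conclude the pair then solves the original ELS, and invoke Lemma~\ref{l:laptau} for uniqueness. The only cosmetic deviation is that the paper obtains $\int_{\pcb}v_n=0$ directly from the divergence theorem on $\cbo$ (using $v_n=0$ on $\pO$), whereas you take a small detour through $\int_\pO\btau=0$ via the jump relation and then apply Gauss' law \eqref{Gauss}; the two are equivalent.
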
            
\begin{proof}
First note that \eqref{pschur} is the Schur complement of
the perturbed ELS
\be
\mt{A}{B}{C}{Q+CHR\tr}\vt{\btau}{\xi} \; = \; \vt{0}{\mbf{g}}
~.
\label{pELS}
\ee
In particular one may check that if $\btau$ solves \eqref{pschur}
then $(\btau,\xi)$, with $\xi$
as in Step 3, solves \eqref{pELS}.
Given such a solution $(\btau,\xi)$, define the potential generated
by the usual representation
$$
v (\xx) := 
\sum_{m,n\in\{-1,0,1\}} \sum_{i=1}^N w_i G(\xx,\xx_i+m\ex+n\ey) \tau_i
+ \sum_{j=1}^M \xi_j \phi_j(\xx)~,
$$
i.e.\ the quadrature approximation to \eqref{urep}.
Note that $C \btau + Q \xi$ then approximates the discrepancy
of $v$ at the nodes on the unit cell walls.
The first row of \eqref{pELS} states that $v_n = 0$ on $\pO$,
and since $v$ is harmonic in $\cb \backslash \overline{\Omega}$,
the net flux of $v$ through $\partial\cb$ is zero.
This means that the discrepancy of $v$ obeys the same consistency
condition as in Prop.~\ref{p:lapemptynull}, which when discretized
on the walls gives $\mbf{w}\tr (C \btau + Q \xi) = 0$,
where $\mbf{w}$ is defined by \eqref{w}.
Subtracting this from the second row of \eqref{pELS}
left-multiplied by $\mbf{w}\tr$ leaves only the expression
$(\mbf{w}\tr C H) R\tr\xi = \mbf{w}\tr \mbf{g}$.
By Lemma~\ref{l:onesdens}, $\mbf{w}\tr C H \neq 0$,
and by computation $\mbf{w}\tr \mbf{g} = 0$.
Thus $R\tr\xi = 0$, so the pair also solves the original ELS \eqref{ELS}.
Thus Lemma~\ref{l:laptau} (strictly, its quadrature approximation) holds,
so $\btau$ is unique.
\end{proof}

In contrast to the previous section,
the system \eqref{pschur} to be solved
is {\em well conditioned} if the non-periodic BIE matrix $A$
is; 
the unit nullity of \eqref{ELS} has been removed by
imposing one extra condition $R\tr \xi = 0$.

We now test the above procedure for
the Laplace Neumann periodic BVP of Example 1 (Fig.~\ref{f:ELS}(a)).
We solve \eqref{pschur} iteratively via GMRES
with a relative stopping residual tolerance of $10^{-14}$.
In Step 2 we use {\tt linsolve} as in Remark~\ref{r:linsolve},
and verify that the resulting norm $\|X \|_2 \approx 9$ is not large.
Fig.~\ref{f:ELS}(c) includes (as dashed lines) the
resulting self-convergence of $u$, and of the flux $J_1$
(computed as in Sec.~\ref{s:kappa}),
with other parameters converged as before.
The converged values agree to around $10^{-13}$.
Above $10^{-13}$, the errors are identical to those of the full ELS.
The condition number of $\tilde A_\tbox{per}$ is 8.3, and the number of
GMRES iterations required was 12, both independent of $N$ and $M$.

\bfi 
\mbox{
\hspace{-1ex}
\ig{height=1.5in}{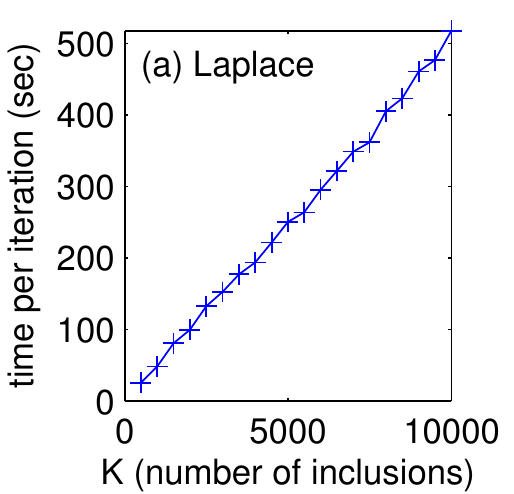}
\ig{height=1.5in}{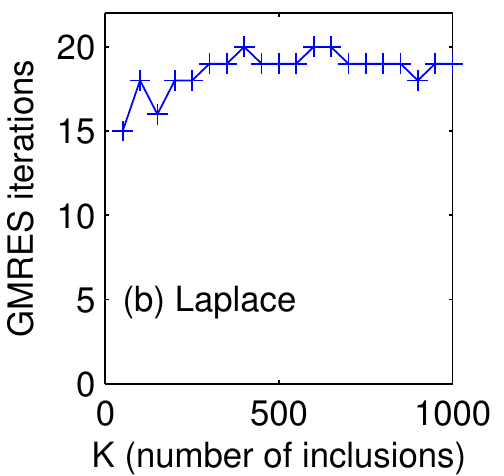}
\ig{height=1.5in}{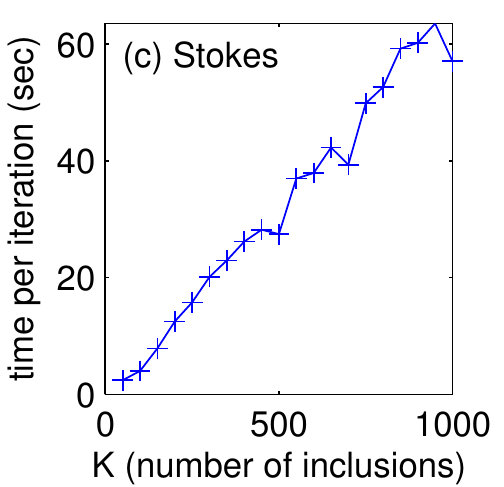}
\ig{height=1.5in}{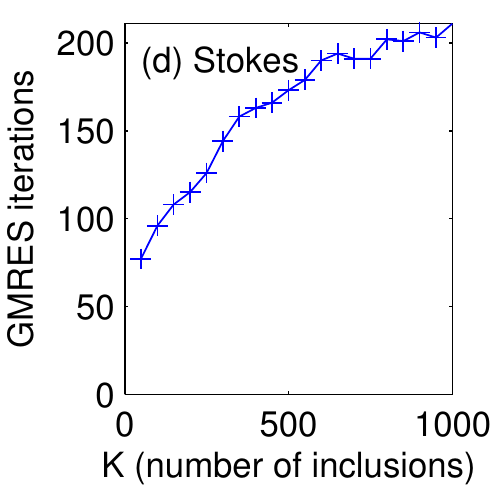}
}
\ca{Scaling of computation time per iteration, and iteration counts,
with $K$, the number of inclusions.
In all four plots, we set $N_k=256$.
(a) CPU time per iteration for the Laplace problem with random
star-shaped obstacles as in Figs.~\ref{f:lapcandy} and \eqref{f:ELS}(b).
(b) Number of GMRES iterations to reach a relative residual of
$\epsilon=10^{-14}$, for circles with $f_\tbox{clup}=10$.
(c) and (d) are the same as (a) and (b), but for the Stokes problem.
}{f:GMRES}
\efi

\subsection{Multi-inclusion examples and close-to-touching geometries}
\label{s:multi}

Generalizing the above to $K>1$ disjoint inclusions $\{\Omega_k\}_{k=1}^K$
in the unit cell is largely a matter of bookkeeping.
The representation \eqref{urep} becomes
a $3\times 3$ image sum over single-layer potentials
on each inclusion boundary,
\be
u \; = \; \sum_{k=1}^K{\cal S}^\tbox{near}_{\pO_k}\tau_k + \sum_{j=1}^M \xi_j \phi_j
~, \qquad \mbox{ with } \; \tau := \{\tau_k\}~. 
\label{urepK}
\ee
In particular, the proxy representation is unchanged.
Upon discretization using $N_k$ nodes on the $k$th inclusion boundary,
with a total number of unknowns $N:=\sum_{k=1}^K N_k$,
the $A$ matrix now has a $K$-by-$K$ block structure
where the $-\half$ identity only appears in the diagonal blocks.
For large $N$,
to solve the resulting linear system \eqref{pschur} iteratively via GMRES,
one needs to apply $\tilde A_\tbox{per} = A-BX$ to any given
vector $\btau$.
We can perform this matrix-vector multiply in $\bigO(N)$ time, as follows.
We apply the off-diagonal of $A$ using the FMM with source
charges $\tau_j w_j$, and the 
diagonal of $A$ as discussed below \eqref{Anyst}.
Then, having pre-stored $B$ and $X$, which needs $\bigO(MN)$ memory,
we compute the correction $-B(X\btau)$ using two 
standard BLAS2 matrix-vector multiplies.

When curves come close (in practice, closer than $5h$, where $h$ is the local
node spacing \cite[Rmk.~6]{ce}), high accuracy demands
replacing the native Nystr\"om quadrature \eqref{Anyst}
with special quadrature formulae.
Note that this does not add extra unknowns.
For this we use recently developed close-evaluation quadratures for the
periodic trapezoid rule with the Laplace single-layer potential
\cite{lsc2d}.
For each of the source curves for which a given target is within $5h$,
we subtract off the native contribution from the above FMM evaluation
and add in the special close evaluation for that curve.

\begin{rmk}[Geometry generation] 
In all our remaining numerical examples except Examples 3 and 5, we
create random geometries
with a large number $K$ of inclusions as follows.
We generate polar curves of the form $r(\theta) = s(1 + a\cos(w\theta + \phi))$,
with $\phi$ random, $a$ uniform random in $[0,0.5]$, $w$ randomly
chosen from the set $\{2,3,4,5\}$, and $s$ varying over a size ratio of $4$.
Starting with the largest $s$, we add in such curves (translated to uniformly
random locations in the unit cell), then discard any that intersect.
This is repeated in a sequence of decreasing $s$ values
until a total of $K$ inclusions are generated.
Finally, the $s$ (size) of all inclusions were multiplied by 0.97.
This has the effect of making a random geometry with a
minimum relative closeness
of around 3\% of the radius (this is only approximate since it depends on
local slope $r'(\theta)$).
Helsing--Ojala \cite{helsing_close} defines for circles a closeness parameter
$f_\tbox{clup}$ as the upper bound on the 
the circumference of the larger circle divided by the minimum distance
between the curves. If in their definition
one replaces circumference by $2\pi$ times the largest radius
of a non-circular curve, then in our geometry $f_\tbox{clup} \approx 200$.
\label{r:geom}
\end{rmk}        

{\bf Example 2.}
With $K=100$ inclusions generated as just described,
and an external driving $\mbf{p}=(1,0)$,
we use the well-conditioned iterative method from sec.~\ref{s:wellcond},
and the Laplace FMM of Gimbutas--Greengard \cite{HFMM2D}.
Fig.~\ref{f:ELS}(b) shows the solution potential, and
Fig.~\ref{f:ELS}(d) the self-convergence of the potential
at a point and of the flux, with respect to $N_k$.
Both achieve at least 13 digits of accuracy.
At a fully-converged $N_k=400$, the solution time was 445 seconds.
Fig.~\ref{f:ELS}(e) shows the convergence with respect to
$M$, the number of proxy points:
this confirms that this convergence rate is at least as good as
it is for $K=1$.
In other words, as least for a square unit cell, the $M$ required for close to machine accuracy is around 100,
and, as expected, is {\em independent of the complexity of the geometry}.

For this example,
we verify linear complexity of the scheme in Fig.~\ref{f:GMRES}(a),
for up to $K=10^4$ inclusions ($N=2.56 \times 10^6$ total unknowns).
In Fig.~\ref{f:lapcandy}, we plot the solution for
$K=10^4$ inclusions,
and $N_k=512$ (or $N=5.12 \times 10^6$ total unknowns).
The solution
requires 77 GMRES iterations and around 28 hours of computation time.
The flux error is estimated at $10^{-4}$, by comparing
to the solution at a larger $N_k$ value.
The cause of this lower accuracy compared to that achieved at smaller $K$ is
an area for future research.

{\bf Example 3.}
A natural question is how the complexity of the geometry
affects the number of GMRES iterations.
To address this, we generate simpler random geometries using $K$
circles, again with random sizes of ratio up to 4,
but with $f_\tbox{clup} = 10$,
which means that curves are not very close to each other.
In Fig.~\ref{f:GMRES}(b) shows that the number of iterations
grows very weakly, if at all, with $K$.
The interesting question of the impact of $f_\tbox{clup}$ on iteration
count we postpone to future work, but note that this has
been studied in the non-periodic case \cite{helsing_close} and would
expect similar results.

\bfi 
\ig{width=6.3in}{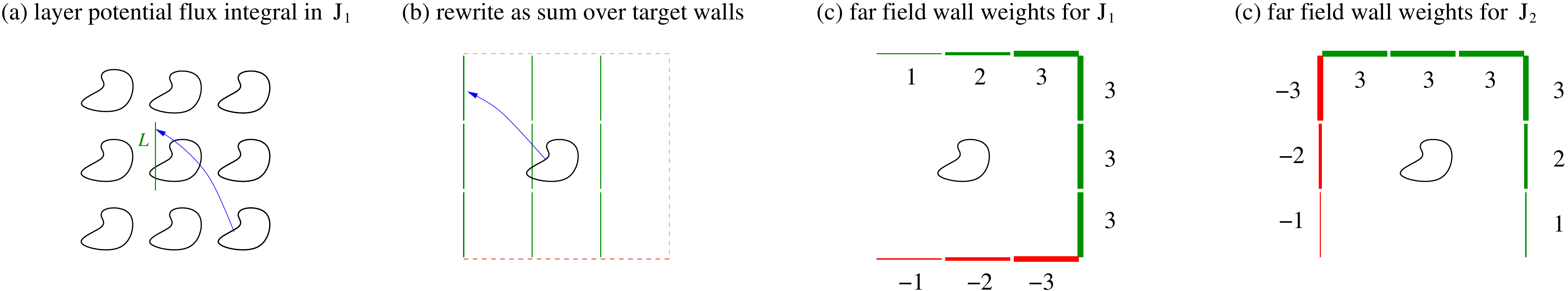}
\ca{Efficient evaluation of fluxes $(J_1,J_2)$ using far field interations
alone.
(a) Nine terms in $L$ wall integral in $J_1$ from the $3\times 3$
layer potential image sum in \eqref{urep}.
(b) Re-interpretation as a sum over targets (nine copies of $L$)
for the potential $v$.
The red dotted line shows the closure of the boundary
where flux conservation is applied.
(c) Resulting weights of the flux integrals of $v$ on nine wall segments;
note all are distant from $\pO$.
(d) The wall weights for $J_2$.
}{f:fluxfar}
\efi   

\subsection{Computing the effective conductivity tensor}
\label{s:kappa}

An important task is to
compute the effective conductivity $\kappa \in \RR^{2\times 2}$,
which expresses how the mean flux depends on the driving.
Let $\mbf{J}:=(J_1,J_2)$ be the mean flux, with components
\be
J_1 := \int_L u_n ds~,\qquad \mbox{ and} \;\;
J_2 := \int_D u_n ds
~,
\label{flux}
\ee
and, recalling the pressure vector $\mbf{p}=(p_1,p_2)$,
the conductivity tensor is defined by Darcy's law
\be
\mbf{J} = \kappa \mbf{p}~.
\label{kappa}
\ee
As is well known \cite{miltonbook,Moura94},
to extract the four elements of $\kappa$, it is sufficient
to solve two BVPs (``cell problems''),
one with $p_1=1,p_2=0$ (from which one may read off
$\kappa_{11} = J_1$ and $\kappa_{21} = J_2$),
and the other with $p_1=0,p_2=1$ (and read off $\kappa_{12} = J_1$ and
$\kappa_{22} = J_2$).
Note that $\kappa$ is symmetric \cite[Cor.~6.10]{cioranescu},
hence
$|\kappa_{12}-\kappa_{21}|$ provides an independent gauge of numerical accuracy.

For large-scale problems,
approximating the integrals \eqref{flux} directly by quadrature on the walls
$L$ and $D$ is inconvenient, because, when inclusions intersect the
walls, this forces the integral to be broken into intervals
and forces close-evaluation quadratures to be used.
One could deform the integration paths to avoid inclusions,
but finding such a smooth path is complicated, and needs
many quadrature nodes, due to having to pass near inclusions.
Instead, we propose the following method which pushes all interactions to
the far field, and thus requires only a fixed
$m\approx 20$ target nodes per wall and no special quadratures.
\begin{pro}     
Let $u$ be represented by \eqref{urep} and solve the BVP
\eqref{lap2}--\eqref{j2} in $\cb$.
Define $v = {\cal S}_\pO\tau$. Then the horizontal flux in \eqref{flux}
can be written
\be
J_1 = 
\sum_{m\in\{-1,0,1\}} \!\! (m+2)\biggl(
\int_{U+m\ex+\ey} \hspace{-5ex} v_n ds - \int_{D+m\ex-\ey} \hspace{-5ex} v_n ds
\biggr)
+ 3 \!\! \sum_{n\in\{-1,0,1\}} \!\! \int_{R+\ex+n\ey} \hspace{-5ex} v_n ds
\;+\;
\int_L \sum_{j=1}^M \frac{\partial \phi_j}{\partial n} \xi_j ds
~.
\label{J1trick}
\ee
\end{pro}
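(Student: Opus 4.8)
The plan is to evaluate $\int_L u_n\,ds$ directly from the representation \eqref{urep}, splitting it into the $3\times3$ near single-layer part and the proxy part. The proxy part contributes precisely $\int_L\sum_{j=1}^M\frac{\partial\phi_j}{\partial n}\xi_j\,ds$, which is the last term of \eqref{J1trick}, and we leave it as is: unlike $v:={\cal S}_\pO\tau$, whose singularities all lie on $\pO$, the proxy sum is singular on the circle $\|\xx\|=\rp$, which passes through the region we are about to deform, so it cannot be moved by flux conservation. For the near part, take $\cb$ to be the origin-centred unit square; by translation invariance of $G$, the $L$-normal derivative of $\int_\pO G(\cdot,\yy+m\ex+n\ey)\tau(\yy)\,ds_\yy$ equals $v_n(\cdot-m\ex-n\ey)$, so its integral over $L$ equals $\int_{L-m\ex-n\ey}v_n\,ds$. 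Summing over $(m,n)\in\{-1,0,1\}^2$ and stacking, for each horizontal offset, the three vertical translates of $L$ into a single segment of length three, the near part of $J_1$ becomes $\sum_{c\in\{-3/2,-1/2,1/2\}}\int_{\ell_c}v_n\,ds$, where $\ell_c:=\{c\}\times[-3/2,3/2]$ carries the rightward normal; note $\ell_{1/2}=\bigcup_{n\in\{-1,0,1\}}(R+n\ey)$ and the target segment $\ell_{3/2}:=\{3/2\}\times[-3/2,3/2]=\bigcup_{n\in\{-1,0,1\}}(R+\ex+n\ey)$.

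Next I would slide each $\ell_c$ rightward onto $\ell_{3/2}$ by flux conservation. Applying the divergence theorem to the harmonic function $v$ on the rectangular slab $[c,c+1]\times[-3/2,3/2]$ gives $\int_{\ell_c}v_n\,ds=\int_{\ell_{c+1}}v_n\,ds+T_c-B_c-q_c$, where $T_c$ and $B_c$ are the upward-normal fluxes of $v$ through the top and bottom edges at $y=\pm3/2$ and $q_c$ is the total charge of $\pO$ enclosed by the slab, in the sense of \eqref{Gauss}. Telescoping all three of $\ell_{-3/2},\ell_{-1/2},\ell_{1/2}$ onto $\ell_{3/2}$ and adding, the slab whose top and bottom edges are $U+m\ex+\ey$ and $D+m\ex-\ey$ is crossed by exactly $m+2$ of the three segments, which produces the weight $(m+2)$ and the $U$- and $D$-sums of \eqref{J1trick}, while the three terminal copies of $\ell_{3/2}$ give the term $3\sum_{n\in\{-1,0,1\}}\int_{R+\ex+n\ey}v_n\,ds$.

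What remains is to show the enclosed-charge terms vanish. When $\Omega\subset\cb$, only the central slab $[-1/2,1/2]\times[-3/2,3/2]$ meets $\pO$, so the sole surviving charge is $\int_\pO\tau\,ds$, and this is zero for any $u$ solving the BVP: $u$ is harmonic in $\cbo$ with $u_n=0$ on $\pO$ by \eqref{bc2}, so Green's first identity gives $\int_{\pcb}u_n\,ds=0$; subtracting the net flux through $\pcb$ of the proxy sum, which vanishes since that sum is harmonic throughout $\cb$, leaves $\int_{\pcb}({\cal S}^\tbox{near}_\pO\tau)_n\,ds=0$, and since only the central image of $\pO$ lies inside $\cb$, \eqref{Gauss} identifies this integral with $\int_\pO\tau\,ds$. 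Hence every $q_c=0$, and adding back the untouched proxy integral over $L$ yields \eqref{J1trick}.

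The bulk of this is bookkeeping --- tracking translations, the rightward and upward normal conventions on the walls, and the count that yields the coefficients $m+2$ and the precise image labels $U+m\ex+\ey$, $D+m\ex-\ey$, $R+\ex+n\ey$ --- and I expect that, rather than any single analytic step, to be where care is needed. The one genuine subtlety is the case where $\Omega$ meets $L$ or $R$: then $\pO$ protrudes into a neighbouring slab and the charge accounting must be redone; keeping $\Omega$ far from the boundary of the $3\times3$ block (cf.\ Remark~\ref{r:intersect}) or splitting $\pO$ into translated pieces handles it, and I would state \eqref{J1trick} under $\Omega\subset\cb$ and remark that the extension is routine.
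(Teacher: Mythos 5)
Your proof is correct and follows essentially the same route as the paper: reinterpret the nine source-copy contributions as wall-target translations, stack them into three length-3 vertical walls, and deform each rightward using flux conservation (you phrase the deformation as slab-by-slab telescoping and track enclosed charges; the paper phrases it as a ``closure to the right'' contour, but these are the same computation, and your $\int_\pO\tau\,ds=0$ is equivalent to the paper's statement that $\int_\Gamma v_n=0$ for $\Gamma$ enclosing $\pO$ via \eqref{Gauss}). The only difference is presentational: you explicitly restrict to $\Omega\subset\cb$ and flag the wall-crossing case as needing a separate (routine) accounting, while the paper asserts without detail that ``one may check'' the result is unaffected by such intersections; both treat that case at a comparable level of rigor.
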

The flux integrals involving $v$ are on the {\em distant} walls of the
$3\times 3$ ``super-cell'', with locations and weights shown in
Fig.~\ref{f:fluxfar}(c).
The final term involves a smooth integrand on the original wall $L$.
A similar far-field formula for $J_2$ is achieved by
reflection through the line $x_1=x_2$, with weights shown in
Fig.~\ref{f:fluxfar}(d).
\begin{proof}
Substituting \eqref{urep} into $J_1$ in \eqref{flux}
involves a $3\times 3$ sum over density sources (Fig.~\ref{f:fluxfar}(a))
which by translational invariance
we reinterpret as a sum over displaced target copies as in panel (b).
These nine copies of $L$ form three continuous vertical walls.
Since $v$ is harmonic in $\RR^2\backslash \overline{\Omega}$,
then $\int_\Gamma v_n = 0$ for any closed curve $\Gamma$
that does not enclose nor touch $\pO$.
However, since $u_n=0$ on $\pO$ and $u-v$ is harmonic in a neighborhood of
$\Omega$, this also holds if $\Gamma$ encloses $\pO$.
Thus the flux through each length-3 vertical wall is equal to the
flux through its closure
to the right along the dotted contour shown in panel (b).
Summing these three contour closures, with appropriate normal senses,
gives the weights in panel (c), i.e.\ \eqref{J1trick}.
One may check that the result is unaffected by intersections of $\pO$ with
the original unit cell walls.
\end{proof}

We have tested that this formula matches the naive quadrature of \eqref{flux}
when $\pO$ is far from $L$.
For Example 1,
Fig.~\ref{f:ELS}(c) and (d) include convergence plots for the
flux $J_1$ as computed by \eqref{J1trick},
showing that it converges at least as fast as do pointwise potential
values.
For the parameters of panel (a) we find that
$|\kappa_{12}-\kappa_{21}| = 6 \times 10^{-14}$, 
indicating high accuracy of the computed tensor.

\section{The no-slip Stokes flow case}
\label{s:sto}

We now move to our second BVP, that of viscous flow through
a periodic lattice of inclusions with no-slip boundary conditions.
We follow the normalization and some of the notation of
\cite[Sec. 2.2, 2.3]{HW}.
Let the constant $\mu>0$ be the fluid viscosity. The periodic BVP is to
solve for a velocity field $\uu$ and pressure $p$ function
satisfying
\bea
-\mu\Delta\uu + \nabla p &=&0
\qquad \mbox{ in } \RR^2 \backslash \overline{\Omega_\Lambda}
\label{sto}
\\
\nabla\cdot\uu &=& 0
\qquad \mbox{ in } \RR^2 \backslash \overline{\Omega_\Lambda}
\label{incomp}
\\
\uu & = & \mbf{0} \qquad \mbox{ on } \partial \Omega_\Lambda
\label{noslip}
\\
\uu(\xx+\ex) &=& \uu(\xx+\ey) \; = \; \uu(\xx) \qquad \mbox{for all } \xx \in \RR^2 \backslash \overline{\Omega_\Lambda}
\label{uper}
\\
p(\xx+\ex) - p(\xx) &=& p_1 \qquad \mbox{for all } \xx \in \RR^2 \backslash \overline{\Omega_\Lambda}
\label{pr1}
\\
p(\xx+\ey) - p(\xx) &=& p_2 \qquad \mbox{for all } \xx \in \RR^2 \backslash \overline{\Omega_\Lambda}
~.
\label{pr2}
\eea
The first two are Stokes' equations, expressing local force balance and
incompressibility, respectively. The third is the no-slip condition,
and the remainder express that the flow is 
periodic and
the pressure periodic up to the given macroscopic pressure driving $(p_1,p_2)$.

We recall some basic definitions. 
Given a pair $(\uu,p)$ the Cauchy stress tensor field has entries
\be
\sigma_{ij}(\uu,p):= -\delta_{ij} p + \mu(\partial_i u_j + \partial_j u_i)
~, \quad i,j = 1,2~.
\label{sigma}
\ee
The hydrodynamic traction $\Tv$ (force vector per unit length that a boundary
surface with outwards unit normal $\nn$ applies to the fluid),
also known as the Neumann data, has components
\be
T_i(\uu,p) := \sigma_{ij}(\uu,p)n_j =
-pn_i + \mu(\partial_i u_j + \partial_j u_i) n_j
~,
\label{T}
\ee
where here and below
the Einstein convention of summation over repeated indices is used.
We first show that 
the BVP has a one-dimensional nullspace.
\begin{pro}
For each $(p_1,p_2)$ the solution $(\uu,p)$ to \eqref{sto}--\eqref{pr2}
is unique up to an additive constant in $p$.
\label{p:stonull}\end{pro}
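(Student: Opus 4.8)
The plan is to mimic the proof of Proposition~\ref{p:lapnull}: pass to the homogeneous problem and use an energy (dissipation) identity. Let $(\uu,p)$ be the difference of two solutions to \eqref{sto}--\eqref{pr2}; then $(\uu,p)$ solves the same system with $p_1=p_2=0$, so $\uu$ is fully periodic and $p$ is periodic (no jumps), with $\uu=\mbf{0}$ on $\pcb$. The goal is to show $\uu\equiv\mbf{0}$, whence $p$ is constant by the momentum equation $-\mu\Delta\uu+\nabla p=0$ (which forces $\nabla p=0$).

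First I would fix any tiling unit cell $\cb$ containing $\Omega$ and integrate the rate-of-strain identity over $\cbo$. The key computation is the Stokes analogue of Green's first identity: for a divergence-free $\uu$,
\be
\int_{\cbo} \sigma_{ij}(\uu,p)\,\partial_j u_i \, d\xx
\;=\;
2\mu\int_{\cbo} e_{ij}(\uu)e_{ij}(\uu)\, d\xx~,
\ee
where $e_{ij}(\uu):=\half(\partial_i u_j+\partial_j u_i)$, using that the pressure term $-\delta_{ij}p\,\partial_j u_i = -p\,\nabla\cdot\uu$ vanishes by incompressibility. On the other hand, integrating the momentum equation against $\uu$ and integrating by parts,
\be
0 \;=\; \int_{\cbo} (-\mu\Delta u_i + \partial_i p)\,u_i\, d\xx
\;=\; \int_{\cbo} \sigma_{ij}(\uu,p)\,\partial_j u_i\, d\xx
\;-\; \int_{\partial(\cbo)} \sigma_{ij}(\uu,p) n_j u_i \, ds~,
\ee
so $2\mu\int_{\cbo} e_{ij}e_{ij} = \int_{\partial(\cbo)} T_i(\uu,p)\,u_i\, ds$. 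The boundary $\partial(\cbo)$ splits into the four walls $\pcb$ and $\pO$ (with appropriate orientation). On $\pO$ the integrand vanishes because $\uu=\mbf{0}$ there (no-slip). On $\pcb$ the contributions from opposite wall pairs $R$--$L$ and $U$--$D$ cancel: $\uu$ takes equal values on matched wall points by periodicity \eqref{uper}, the outward normals are opposite, and the traction $\Tv(\uu,p)$ is also periodic — this is where I must use that $p$ has no jump in the homogeneous problem, so that $\sigma_{ij}(\uu,p)$ is genuinely periodic. Hence the right-hand side is zero, forcing $e_{ij}(\uu)\equiv 0$ on $\cbo$.

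Finally, $e_{ij}(\uu)\equiv 0$ means $\uu$ is an infinitesimal rigid motion, $\uu(\xx)=\mbf{a}+\omega\,\xx^{\perp}$ for constants $\mbf{a}\in\RR^2$, $\omega\in\RR$; periodicity of $\uu$ kills the rotational part ($\omega=0$), and the no-slip condition $\uu=\mbf{0}$ on $\pO$ then forces $\mbf{a}=\mbf{0}$, so $\uu\equiv\mbf{0}$. Plugging back into \eqref{sto} gives $\nabla p=\mbf{0}$, so $p$ is constant, which is exactly the claimed one-dimensional nullspace. The main obstacle is the bookkeeping of the boundary term on $\pcb$: one must verify carefully that both $\uu$ and the full traction $T_i=\sigma_{ij}n_j$ match across periodic wall pairs in the $p_1=p_2=0$ case, so that the wall integrals cancel rather than contribute a nonzero flux of momentum; the rest is the routine Stokes energy argument. (One should also note the mild regularity assumption needed to justify the integrations by parts up to the boundary, as in the Laplace case.)
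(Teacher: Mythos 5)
Your proof is correct and follows essentially the same route as the paper: take the difference of two solutions, apply the Stokes energy (Green's first) identity over $\cbo$, cancel the $\pO$ term by no-slip and the wall terms by periodicity, conclude zero rate-of-strain so $\uu$ is a rigid motion, eliminate it via periodicity and no-slip, and then read off $\nabla p = 0$ from the momentum equation. One small point in your favor: you explicitly flag that the wall cancellation needs periodicity of the \emph{traction} $\Tv(\uu,p)$ (hence of $p$ itself in the homogeneous case), whereas the paper's wording attributes the cancellation only to periodicity of $\uu$; your more careful accounting is the right way to read that step.
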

\begin{proof}
The proof parallels that of Prop.~\ref{p:lapnull}.
Given any function pairs $(\uu,p)$ and $(\vv,q)$,
Green's first identity on a domain $\cal K$
is \cite[p.~53]{Ladyzhenskaya}
\be
\int_{\cal K} (\mu \Delta u_i - \partial_i p) v_i
= - \frac{\mu}{2} \int_{\cal K} (\partial_i u_j + \partial_j u_i)(\partial_i v_j + \partial_j v_i) + \int_{\partial\cal K} T_i(\uu,p) v_i
~.
\label{G1I}
\ee
Now let $(\uu,p)$ be the difference between two BVP solutions,
and set $\vv=\uu$ and ${\cal K} = \cbo$ in \eqref{G1I}.
The left-hand side vanishes due to \eqref{sto}, the $\pO$ boundary term
vanishes due to \eqref{noslip}, and the unit cell wall terms vanish by
$\uu$ periodicity, leaving only
$\int_{\cbo} \sum_{i,j=1}^2 (\partial_i u_j + \partial_j u_i)^2 = 0$.
Thus $\uu$ has zero stress, i.e.\ is a rigid motion,
so, by \eqref{noslip}, $\uu\equiv\mbf{0}$. Thus, by \eqref{sto},
and because $p$ has no pressure drop, $p$ is constant.
\end{proof}

Since, for Stokes, the Cauchy data is $(\uu,\Tv)$ \cite[Sec.~2.3]{HW},
the BVP  \eqref{sto}--\eqref{pr2} is equivalent to the BVP on a
single unit cell,
\bea
-\mu\Delta\uu + \nabla p &=&0
\qquad \mbox{ in } \cb \backslash \overline{\Omega}
\label{sto2}
\\
\nabla\cdot\uu &=& 0
\qquad \mbox{ in } \cb \backslash \overline{\Omega}
\label{incomp2}
\\
\uu & = & \mbf{0} \qquad \mbox{ on } \pO
\label{noslip2}
\\
\uu_R - \uu_L &=& \mbf{0}
\label{uu1}
\\
\Tv(\uu,p)_R - \Tv(\uu,p)_L &=& p_1 n
\label{T1}
\\
\uu_U - \uu_D &=& \mbf{0}
\label{uu2}
\\
\Tv(\uu,p)_U - \Tv(\uu,p)_D &=& p_2 n ~,
\label{T2}
\eea
where the normal $n$ has the direction and sense for the
appropriate wall as in Fig.~\ref{f:geom}(a).
Discrepancy will refer to the stack of the four vector functions on the left-hand side of \eqref{uu1}--\eqref{T2}.

\subsection{The Stokes empty unit cell discrepancy BVP} 
\label{s:stoempty}

Proceeding as with Laplace, one must first understand
the empty unit cell BVP with given discrepancy data
$\mbf{g}:=[\mbf{g}_1;\mbf{g}_2;\mbf{g}_3;\mbf{g}_4]$, which is
to find 
a pair $(\vv,q)$ solving
\bea
-\mu\Delta\vv + \nabla q &=&0 \qquad \mbox{ in } \cb
\label{stov}
\\
\nabla\cdot\vv &=& 0
\qquad \mbox{ in } \cb
\label{incompv}
\\
\vv_R - \vv_L &=& \mbf{g}_1
\label{gg1}
\\
\Tv(\vv,q)_R - \Tv(\vv,q)_L &=& \mbf{g}_2
\label{gg2}
\\
\vv_U - \vv_D &=& \mbf{g}_3
\label{gg3}
\\
\Tv(\vv,q)_U - \Tv(\vv,q)_D &=& \mbf{g}_4 ~,
\label{gg4}
\eea
This BVP has three consistency conditions and three
nullspace dimensions, as follows.%
\footnote{Note that, although three is also the nullity of
the 2D Stokes interior Neumann BVP \cite[Table 2.3.3]{HW}, both
nullspace and consistency conditions differ from that case.}
\begin{pro} 
A solution $(\vv,q)$ to \eqref{stov}--\eqref{gg4}
exists if and only if
\bea
\int_L \mbf{g}_2 ds + \int_D \mbf{g}_4 ds &=& \mbf{0} \qquad 
\mbox{{\rm (zero net force)}, and}
\label{gforce}
\\
\int_L n\cdot \mbf{g}_1 ds + \int_D n\cdot \mbf{g}_3 ds &=& 0 \qquad 
\mbox{{\rm (volume conservation)},}
\label{gvol}
\eea
and then is unique up to translational flow and additive pressure constants.
I.e.\ the 
solution space is $(\vv +\mbf{c},q + c)$
for $(\mbf{c},c) \in \RR^3$.
\label{p:stoemptynull}\end{pro}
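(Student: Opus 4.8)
The plan is to mirror the structure of the proof of Proposition~\ref{p:lapemptynull}, using the Stokes analogue of the divergence theorem together with Green's first identity \eqref{G1I}. First I would establish \emph{necessity} of the two conditions. For \eqref{gforce}: since $(\vv,q)$ solves the homogeneous Stokes equations \eqref{stov}--\eqref{incompv} in $\cb$, the traction $\Tv(\vv,q)$ is divergence-free in the distributional sense ($\partial_j\sigma_{ij}=0$), so $\int_\pcb \Tv\,ds=\mbf{0}$; writing $\pcb = L\cup R\cup D\cup U$ with the wall normal senses of Fig.~\ref{f:geom}(a), the $R$ and $L$ contributions combine (after the lattice shift that identifies $R$ with $L+\ex$) into $\int_L(\Tv_R-\Tv_L)\,ds=\int_L\mbf{g}_2\,ds$, and similarly the $U,D$ pair gives $\int_D\mbf{g}_4\,ds$, so their sum vanishes. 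For \eqref{gvol}: incompressibility $\nabla\cdot\vv=0$ gives $\int_\pcb n\cdot\vv\,ds=0$ by the divergence theorem, and the same wall-pairing turns this into $\int_L n\cdot\mbf{g}_1\,ds+\int_D n\cdot\mbf{g}_3\,ds=0$.

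Next, \emph{sufficiency}: given data satisfying \eqref{gforce}--\eqref{gvol}, one must produce a solution. I would argue this by a Fredholm/dimension-count argument rather than an explicit construction: the map sending $(\vv,q)$ (modulo the nullspace) to its discrepancy is a bounded operator between appropriate trace spaces whose adjoint/cokernel is governed by exactly the two functionals in \eqref{gforce}--\eqref{gvol}; since those two functionals annihilate the range and the nullspace has dimension three (shown next), the range is precisely the codimension-two subspace they cut out. Alternatively, and more in the spirit of the paper, one notes that the later-developed periodizing scheme (the matrix $Q$ of Section~\ref{s:stoempty}) is shown numerically to have nullity three with left-nullspace spanned by the discretizations of \eqref{gforce}--\eqref{gvol}, which is the discrete shadow of this statement; a clean proof can defer to the standard layer-potential theory for the Stokes equation on $\cb$ (the analogue of the disc-collocation result of Katsurada used for Theorem~\ref{t:Qconv}).

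For the \emph{nullspace}, set all $\mbf{g}_i=\mbf{0}$ and let $(\vv,q)$ be a corresponding solution. Apply \eqref{G1I} with ${\cal K}=\cb$ (now the whole cell, no inclusion) and test function $\vv$ itself: the left side vanishes by \eqref{stov}, and the boundary term $\int_\pcb T_i(\vv,q)v_i$ vanishes because the homogeneous discrepancy conditions make both $\vv$ and $\Tv(\vv,q)$ periodic across opposite walls while the outward normals on paired walls are opposite, so the $R$--$L$ and $U$--$D$ contributions cancel. Hence $\int_\cb(\partial_i v_j+\partial_j v_i)^2=0$, so $\vv$ is a rigid motion $\vv(\xx)=\mbf{c}+\omega\,\xx^\perp$; the rotational part $\omega\,\xx^\perp$ is not periodic across the walls (its discrepancy across $R$--$L$ is $\omega\,\ex^\perp\neq\mbf{0}$ unless $\omega=0$), so $\omega=0$ and $\vv\equiv\mbf{c}$ is an arbitrary constant vector (two dimensions). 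Then \eqref{stov} forces $\nabla q=\mbf{0}$, so $q$ is an arbitrary constant (one dimension), and one checks this constant pressure contributes $q\,n$ to each traction, which cancels in each wall-difference since paired walls have opposite normals — so it is genuinely in the nullspace. This gives the claimed solution space $(\vv+\mbf{c},q+c)$ with $(\mbf{c},c)\in\RR^3$.

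The main obstacle is the sufficiency direction: unlike necessity and the nullspace computation, which are one-line consequences of the divergence theorem and Green's identity, existence requires either invoking Fredholm theory for the periodic-wall Stokes problem or building an explicit solution operator, and some care is needed to match the regularity of the data $\mbf{g}_i$ (the paper only needs the class of data extending to a regular Stokes solution in a neighborhood of $\cb$, which sidesteps this). I would state sufficiency as following from standard Stokes layer-potential theory, citing \cite[Sec.~2.3]{HW}, and spend the detailed writing on the necessity and nullspace parts, which are self-contained given \eqref{G1I}.
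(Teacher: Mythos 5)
Your proof is correct and follows essentially the same route as the paper: necessity of \eqref{gforce} via the vanishing of the total traction on $\pcb$ (the paper phrases it as \eqref{G1I} against constant test flows, which is equivalent to your $\partial_j\sigma_{ij}=0$ argument), necessity of \eqref{gvol} via the divergence theorem, and the nullspace via the Korn-type identity forcing $\vv$ to be a rigid motion with rotation ruled out by its non-periodic discrepancy. The paper's proof, like yours, does not actually establish the existence (sufficiency) direction but leaves it to standard Stokes theory; you are simply more explicit about flagging that gap.
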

\begin{proof}
Noting that \eqref{G1I}, with $(\uu,p)$ and $(\vv,q)$ swapped,
holds for all constant flows $\uu$ shows that $\int_{\partial \cal K} \Tv(\vv,q)=\mbf{0}$; setting $\cal K = \cb$ gives \eqref{gforce}.
\eqref{gvol} follows from the divergence theorem and \eqref{incompv}.
The proof of Prop.~\ref{p:stonull} shows that
the nullspace is no larger than constant $p$ and rigid motions for $\vv$,
but it is easy to check that rotation is excluded due to its effect on
$\mbf{g}_1$ and $\mbf{g}_3$.
\end{proof}

We solve this Stokes empty BVP in an entirely analogous fashion to
Laplace, namely via an MFS representation, but now with
vector-valued coefficients. For $\xx\in \cb$,
\bea
\vv(\xx) & \approx & \sum_{j=1}^M \phi_j(\xx) \bm\xi_j ~,
\qquad \phi_j(\xx) := G(\xx,\yy_j)~,
\qquad \yy_j := (\rp \cos 2\pi j/M, \rp \sin 2\pi j/M)~,
\label{vmfs}
\\
q(\xx) & \approx & \sum_{j=1}^M \phi^\tbox{p}_j(\xx) \cdot \bm\xi_j ~,
\qquad \phi^\tbox{p}_j(\xx) := \Gp(\xx,\yy_j)~,
\label{stomfs}
\eea
where the velocity fundamental solution (stokeslet or single-layer kernel)
is the tensor $G(\xx,\yy)$ with components
\be
G_{ij}(\xx,\yy) = \frac{1}{4\pi\mu}\left( \delta_{ij} \log \frac{1}{r}
+ \frac{r_ir_j}{r^2}\right),
\qquad i,j = 1,2, \quad \rr := \xx-\yy, \quad r:=\|\rr\|~.
\label{Gv}
\ee
and the single-layer pressure kernel is the vector $\Gp$ with components
\be
\Gp_j(\xx,\yy) = \frac{1}{2\pi}\frac{r_j}{r^2}
~,\qquad j=1,2~.
\label{Gp}
\ee
We will also need the single-layer traction kernel $\Gt$ with components
(applying \eqref{T} to the above),
\be
\Gt_{ik}(\xx,\yy) = \sigma_{ij}(G_{\cdot,k}(\cdot,\yy),\Gp_k(\cdot,\yy)) (\xx) \nx_j
= -\frac{1}{\pi}\frac{r_ir_k}{r^2}\frac{\rr\cdot\nx}{r^2}
~,\qquad i,k=1,2~,
\label{Gt}
\ee
where the target $\xx$ is assumed to be on a surface with normal $\nx$.

The generalization of the linear system from (scalar) Laplace to (vector)
Stokes is routine bookkeeping, which we now outline.
The MFS coefficient vector $\xi := \{ \bm\xi_j\}_{j=1}^M$ has $2M$ unknowns,
which we order with the $M$ 1-components followed by the $M$ 2-components.
Discretizing \eqref{gg1}--\eqref{gg4} on $m$ collocation nodes per wall,
as in Sec.~\ref{s:lapempty}, gives $Q\xi = \mbf{g}$ as in \eqref{Qsys},
with discrepancy vector $\mbf{g} \in \RR^{8m}$.
We choose to order each of the four blocks in $\mbf{g}$ with
all $m$ 1-components followed by $m$ 2-components.
As before, $Q=[Q_1;Q_2;Q_3;Q_4]$ with each $Q_k$ split into $2\times 2$
sub-blocks based on the 1- and 2-components.
For instance,
writing $\phi_j^{kl}$, $k,l=1,2$, for the four components of the
basis function $\phi_j$ in \eqref{vmfs}, then the $R$-$L$ velocity block
$Q_1 = \mt{Q_1^{11}}{Q_1^{21}}{Q_1^{12}}{Q_1^{22}}$,
with each sub-block having entries
$(Q_1^{kl})_{ij} = \phi_j^{kl}(\xx_{iL} +\ex) - \phi_j^{kl}(\xx_{iL})$.
The $R$-$L$ traction block $Q_2$ has sub-blocks
$(Q_2^{kl})_{ij} = \Gt_{kl}(\xx_{iL} +\ex,\yy_j) - \Gt_{kl}(\xx_{iL},\yy_j)$,
where it is implied that the target normal is on $L$.
The other two blocks are similar.

In Fig.~\ref{f:Qconv}(c) we show the convergence of the
solution error produced by numerical solution of
the Stokes version of \eqref{Qsys} (again, see Remark~\ref{r:linsolve}).
The results are almost identical to the Laplace case in Fig.~\ref{f:Qconv}(b),
converging to 15 digit accuracy
with a similar exponential rate; keep in mind that now there are $2M$ unknowns
rather than $M$.
Note that we do not know of a Stokes version of Theorem~\ref{t:Qconv}.
Fig.~\ref{f:Qconv}(c) also shows that three singular values decay faster
than the others, as expected from Prop.~\ref{p:stonull}.

Finally, the
discretization of the consistency conditions in Prop.~\ref{p:stonull}
is the statement that
\be
W\tr Q \; \approx \; \mbf{0}_{3\times 2M}~,
\label{WQ}
\ee
where $\mbf{0}_{m\times n}$ is the $m$-by-$n$ zero matrix,
and the weight matrix $W$ (the analog of \eqref{w}) is
\be
W = \left[\begin{array}{llllllll}
0&0&\ww_L\tr&0&0&0&\ww_D\tr&0\\
0&0&0&\ww_L\tr&0&0&0&\ww_L\tr\\
\ww_L\tr&0&0&0&0&\ww_D\tr&0&0
\end{array}\right]\tr
\; \in \; \RR^{8m\times 3}
~.
\label{W}
\ee

\bfi 
\raisebox{-2.6in}{\ig{width=3.1in}{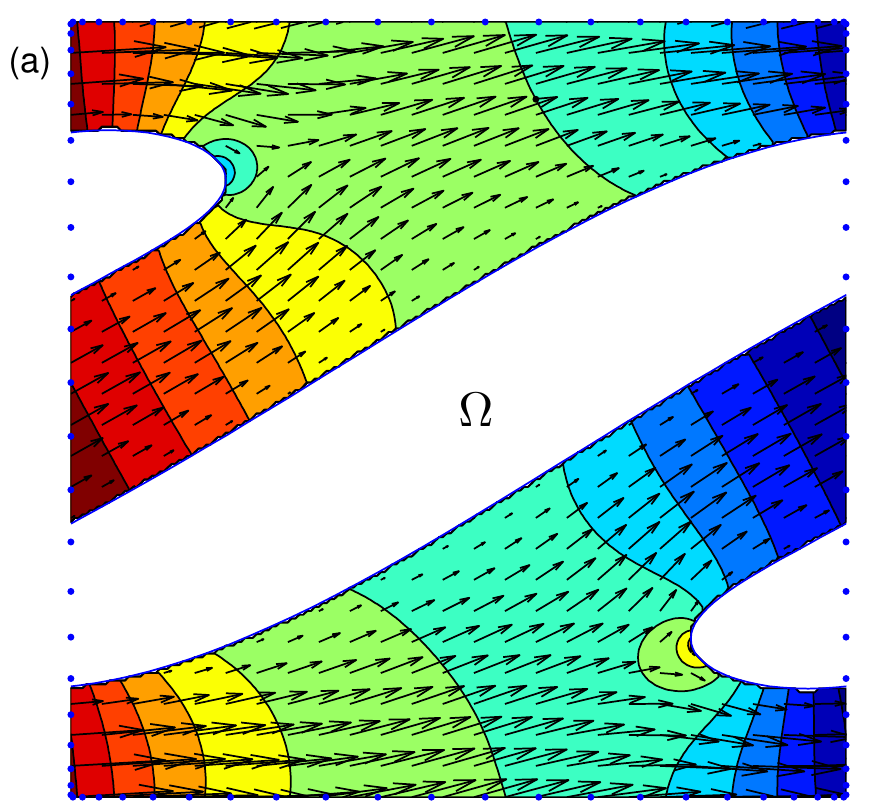}}
\quad
(b)
\raisebox{-2.6in}{\ig{width=2.8in}{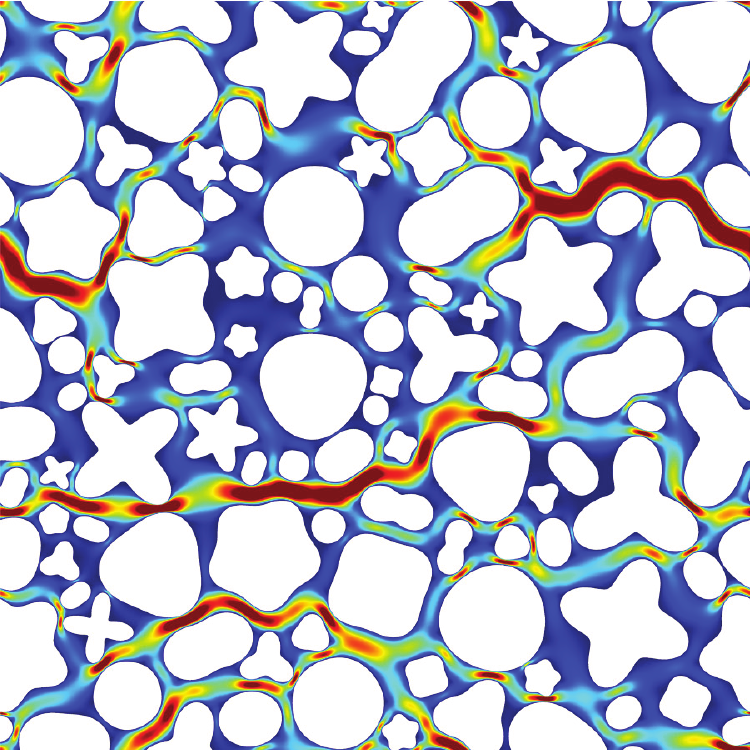}}
\\
\ig{width=2.2in}{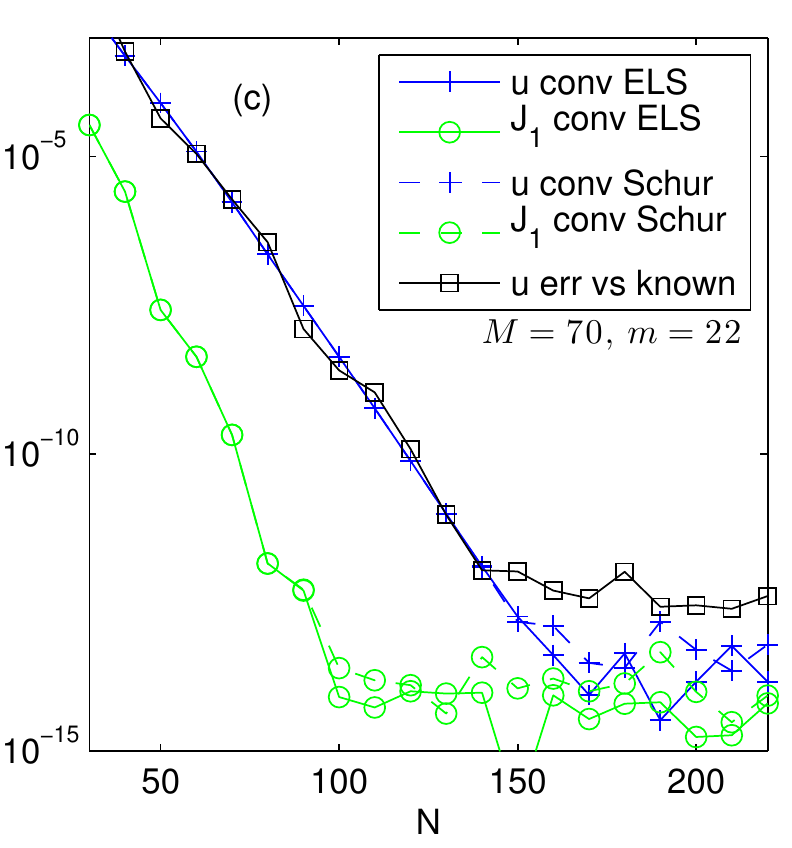}
\hspace{0.95in}
\ig{width=2.2in}{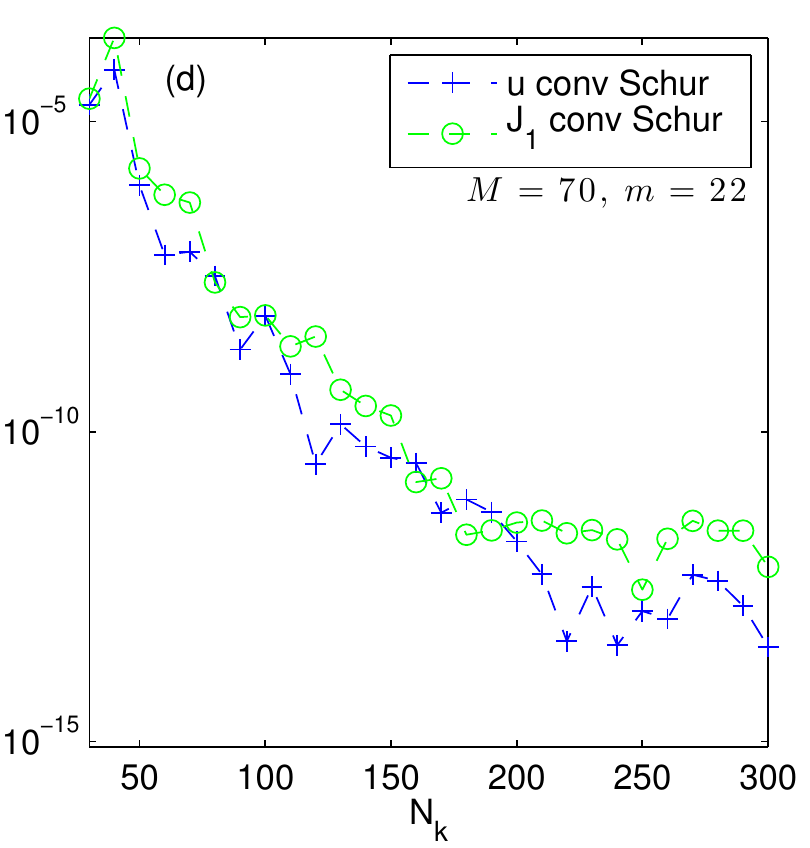}
\ca{Periodic Stokes flow tests with applied pressure drop $\mbf{p} = (1,0)$.
(a) Solution velocity $\uu$ (arrows) and pressure $p$ (contours)
for Example 4 (same geometry as Fig.~\ref{f:ELS}(a)),
with $N=150$, $M=80$, and $\rp=1.4$.
A single unit cell is shown, and the $m=22$ nodes per wall are shown as dots.
(b) Flow speed $|\uu|$ for $K=100$ inclusions per unit cell,
using iterative solution of \eqref{pschur2}
(Example 6; geometry identical to Fig.~\ref{f:ELS}(b)).
(c) $N$-convergence of $\uu$ error (with $M=70$ fixed) at the same test points
as in Fig.~\ref{f:ELS}(a),
and of flux $J_1$ (see end of Sec.~\ref{s:stobie}),
relative to their values at $N=230$ (Example 4).
Squares show error convergence in $\uu$
in the case of known $\uu_\tbox{ex}$ due to a stokeslet grid.
Solid lines are for direct solution of the ELS, 
dashed lines for the iterative solution of \eqref{pschur2}.
(d) Convergence with $N_k$ for $K=100$ inclusions per unit cell (Example 6).
Errors in velocity (+ signs) and flux $J_1$ (circles) are shown,
estimated relative to their values at $N_k=400$.
%
}{f:sto}
\efi

\subsection{Integral representation and the Stokes ELS}
\label{s:stobie}

In contrast to the Neumann Laplace problem, we are now solving
a Dirichlet BVP, which suggests a pure double-layer representation
on inclusion boundaries.
However, this would lead to a 3D nullspace for each inclusion,
associated with its complementary interior Neumann BVP \cite[Table~2.3.3]{HW}.
We remove this nullspace via a ``compound'' or mixed
double-layer formulation, namely an admixture of double- and single-layers
\cite[Thm.~2.1]{hebeker} \cite[p.128]{pozrikidis};
this avoids the use of extra interior stokeslet
degrees of freedom which can add $O(K^3)$ work for $K$ inclusions
\cite{Krop04}.

For convenience we define the 2D Stokes layer potentials.
Given a vector-valued density $\btau = (\tau_1,\tau_2)$ on $\pO$,
the Stokes single-layer potential generates (via \eqref{Gv} and \eqref{Gp})
velocity and pressure
\be
\uu(\xx) = ({\cal S}_\Gamma \bm\tau)(\xx) :=
\int_\Gamma G(\xx,\yy) \bm\tau(\yy) ds_\yy~,
\qquad
p(\xx) = ({\cal S}^\tbox{p}_\Gamma \bm\tau)(\xx) :=
\int_\Gamma \Gp(\xx,\yy) \bm\tau(\yy) ds_\yy~.
\label{uupSLP}
\ee
Using $\ny$ to denote the source normal, the Stokes double-layer velocity
potential is
\be
\uu(\xx) = ({\cal D}_\Gamma \bm\tau)(\xx) :=
\int_\Gamma D(\xx,\yy) \bm\tau(\yy) ds_\yy
\quad \mbox{ where }
D_{ij}(\xx,\yy) = \frac{1}{\pi}\frac{r_ir_j}{r^2}\frac{\rr\cdot\ny}{r^2}
~,\qquad i,j=1,2~,
\label{D}
\ee
a kernel which is the negative transpose of \eqref{Gt}, and the 
associated pressure
\be
p(\xx) = ({\cal D}^\tbox{p}_\Gamma \bm\tau)(\xx) :=
\int_\Gamma \Dp(\xx,\yy) \bm\tau(\yy) ds_\yy
\quad \mbox{ where }
D^\tbox{p}_{j}(\xx,\yy) = \frac{\mu}{\pi}\left( -\frac{\ny_j}{r^2} + 2 \rr\cdot\ny\frac{r_j}{r^4}\right),
\quad i,j=1,2.
\label{Dp}
\ee
The Stokes analog of the boundary integral operator
\eqref{DT} 
is the single-layer traction
\be
(D\tr_{\Gamma',\Gamma} \btau)(\xx) :=
\int_\Gamma \Gt(\xx,\yy) \bm\tau(\yy) ds_\yy~, \qquad \xx\in\Gamma'~,
\label{DTs}
\ee
which uses \eqref{Gt}.
Finally, the hypersingular double-layer traction operator is needed,
\be
(T_{\Gamma',\Gamma} \btau)(\xx) :=
\int_\Gamma \Dt(\xx,\yy) \bm\tau(\yy) ds_\yy~, \qquad \xx\in\Gamma'~,
\label{Ts}
\ee
whose kernel is computed by inserting \eqref{D} and \eqref{Dp}
into \eqref{T} and simplifying to get (e.g.\ \cite[(5.27)]{Liu09book}),
\be
\Dt_{ik} = 
\frac{\mu}{\pi}\left[
\left(\frac{\ny\cdot\nx}{r^2}- 8\dx\dy\right)\frac{r_ir_k}{r^2}
+\dx\dy\delta_{ik} +\frac{\nx_i\ny_k}{r^2}
+\dx\frac{r_k \ny_i}{r^2} + \dy \frac{r_i \nx_k}{r^2}
\right]
\label{Dt}
\ee
where for conciseness we defined the target and source ``dipole functions'',
respectively
$$
\dx = \dx(\xx,\yy) := (\rr\cdot\nx)/r^2
~,\hspace{1in}
\dy = \dy(\xx,\yy) := (\rr\cdot\ny)/r^2 ~.
$$

The Stokes jump relations \cite[Sec.~3.2]{Ladyzhenskaya}
are identical to the usual Laplace ones \eqref{JR}
with the potential taken to be velocity potential,
and the normal derivative replaced by the traction.
In short, the single-layer velocity and double-layer tractions are continuous,
whereas the single-layer traction jump is minus the density,
and the double-layer velocity jump is the density itself.

Armed with the above,
our representation for the solution $(\uu,p)$
in $\cbo$ is the mixed formulation,
\be
\uu \; = \; ({\cal D}^\tbox{near}_\pO+{\cal S}^\tbox{near}_\pO)\btau +
\sum_{j=1}^M \phi_j \bm\xi_j~,
\qquad
p \; = \; ({\cal D}^\tbox{p,near}_\pO+{\cal S}^\tbox{p,near}_\pO)\btau +
\sum_{j=1}^M \phi^\tbox{p}_j \cdot \bm\xi_j~,
\label{uurep}
\ee
where ``near'' denotes $3\times 3$ image sums as in \eqref{urep}.
With this choice made, the continuous
form of the ELS is similar to \eqref{bie}--\eqref{row2d}.
The first block is, applying the exterior jump relation to 
\eqref{noslip2},
\be
(-\half + D^{\tbox{near}}_{\pO,\pO}+S^{\tbox{near}}_{\pO,\pO})\btau + \sum_{j=1}^M \phi_j|_\pO \bm\xi_j \;= \;\mbf{0}~.
\label{stobie}
\ee
Rather than list all second subblocks, we note that they are
simply \eqref{row2a}--\eqref{row2d} with
$S$ replaced by $D+S$, $D\tr$ replaced by $T+D\tr$, and
$\partial \phi_j(\xx)/\partial n$ replaced by the basis traction,
which has kernel $\Gt_{lk}(\xx,\yy_j)$.
Finally, the pressure driving $(p_1,p_2)$ is encoded by the discrete
right hand side
\be
\mbf{g} \;=\; [0;0;p_1\mbf{1}_m;0;0;0;0;p_2\mbf{1}_m] \;\in\; \RR^{8m}
~,
\label{stogg}
\ee
so that the ELS has, as for Laplace, the form \eqref{ELS}.

This mixed formulation allows an analog of
Lemma~\ref{l:laptau}: the Stokes ELS nullspace is spanned by $\bm\xi$.
\begin{lem}  
In the solution to the Stokes ELS (i.e.\ \eqref{stobie} plus the Stokes
version of \eqref{row2a}--\eqref{row2d} described above), $\btau$ is unique.
\label{l:stotau}\end{lem}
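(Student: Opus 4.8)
The plan is to mirror the proof of Lemma~\ref{l:laptau} exactly, substituting Stokes objects for Laplace ones throughout. First I would let $(\btau,\bm\xi)$ be the difference of any two solutions of the Stokes ELS, so that it solves the homogeneous version (with $p_1=p_2=0$, i.e.\ $\mbf g=\mbf 0$). Let $(\vv,q)$ be the pair defined by the representation \eqref{uurep} using this $(\btau,\bm\xi)$, evaluated both in $\cbo$ and inside $\Omega$. By construction $(\vv,q)$ satisfies the homogeneous reformulated BVP \eqref{sto2}--\eqref{T2}, so Prop.~\ref{p:stonull} forces $\vv$ to be constant (a translational flow $\mbf c$) in $\cbo$ and $q$ constant there.

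Next I would propagate this into $\Omega$. The single-layer velocity ${\cal S}^\tbox{near}_\pO\btau$ is continuous across $\pO$ and the double-layer velocity ${\cal D}^\tbox{near}_\pO\btau$ jumps by exactly $\btau$; hence the interior limit of $\vv$ on $\pO$ equals $\mbf c - \btau$ (up to sign conventions to be checked against the exterior jump relation used in \eqref{stobie}). This is a fixed vector function on $\pO$ only if $\btau$ is constant, which we do not yet know, so the argument must run the other way: $\vv$ restricted to the \emph{inside} of $\pO$ solves the interior Stokes BVP \eqref{stov}--\eqref{incompv} with Dirichlet data $\vv|_{\partial\Omega}^-$, and since $\vv$ is a genuine Stokes solution throughout $\Omega$ (the representation \eqref{uurep} is smooth inside $\Omega$), one continues $\vv$ there. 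Then I compare tractions: the traction of $\vv$ from outside on $\pO$ vanishes identically, since $\vv\equiv\mbf c$ in $\cbo$ and a constant flow has zero stress. The single-layer traction jump across $\pO$ is $-\btau$ and the double-layer traction is continuous, so the interior traction of $\vv$ on $\pO$ equals $+\btau$ (again modulo the sign bookkeeping of \eqref{stobie}). But the interior traction must also be obtained from the interior Stokes solution $\vv|_\Omega$; since the interior velocity field is just whatever Stokes solution has the prescribed boundary velocity, and since a constant flow is a valid interior solution precisely when the boundary data is that same constant, I first need to know $\vv|_{\partial\Omega}$ is constant before concluding $\vv|_\Omega$ is constant and hence its traction vanishes. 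The clean way, following the Laplace proof verbatim, is: continuity of the single-layer velocity plus the already-established constancy of the exterior limit shows the interior velocity limit equals a constant $\mbf c$ \emph{minus} the double-layer jump term — but in the mixed representation the double-layer contributes its jump $\btau$, so I must instead argue that $\vv$ is constant in $\cbo$, hence its exterior Cauchy data $(\vv,\Tv(\vv,q))|_{\partial\Omega}$ is $(\mbf c,\mbf 0)$; then $\vv$ inside $\Omega$ is the Stokes solution with this data, which by uniqueness of the interior Dirichlet problem is $\vv\equiv\mbf c$, $q\equiv\text{const}$; hence $\Tv(\vv,q)|_{\partial\Omega}^-=\mbf 0$. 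Finally, subtracting interior and exterior traction limits across $\pO$ gives $\Tv^+ - \Tv^- = -\btau$ from the single-layer jump (the double-layer traction being continuous), and since both $\Tv^+$ and $\Tv^-$ vanish, $\btau\equiv\mbf 0$.

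The main obstacle is handling the sign/normalization bookkeeping in the mixed $({\cal D}+{\cal S})$ representation — specifically making sure the jump relations quoted after \eqref{Ts} are applied with the correct orientation of $\nx$ on $\pO$ (outward from $\Omega$), consistent with the ``exterior jump relation'' already used to derive \eqref{stobie}. The structural logic is identical to Lemma~\ref{l:laptau}: exterior field is trivial by the periodic nullspace proposition, interior field is trivial by interior Dirichlet uniqueness, and then the single-layer traction jump pins $\btau$ to zero. I would write the proof as: ``The proof parallels that of Lemma~\ref{l:laptau}. Let $(\btau,\bm\xi)$ be the difference of two solutions; let $(\vv,q)$ be \eqref{uurep} with this data, defined in $\cbo$ and in $\Omega$. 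By construction $(\vv,q)$ solves the homogeneous \eqref{sto2}--\eqref{T2}, so by Prop.~\ref{p:stonull} $\vv\equiv\mbf c$ (constant) and $q\equiv$ const in $\cbo$. By continuity of the Stokes single-layer velocity and of the double-layer traction, the exterior Cauchy data of $(\vv,q)$ on $\pO$ is $(\mbf c,\mbf 0)$; since $(\vv,q)$ solves Stokes in $\Omega$, uniqueness of the interior Dirichlet problem gives $\vv\equiv\mbf c$, $q\equiv$ const in $\Omega$, so the interior traction on $\pO$ also vanishes. By the single-layer traction jump relation, $\Tv^+-\Tv^-=-\btau$ on $\pO$; both limits vanish, so $\btau\equiv\mbf 0$.''

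\begin{proof}
The proof parallels that of Lemma~\ref{l:laptau}.
Let $(\btau,\bm\xi)$ be the difference between any two solutions of the Stokes ELS, so it solves the homogeneous system (with $\mbf g=\mbf 0$, i.e.\ $p_1=p_2=0$).
Let $(\vv,q)$ be the pair defined by the representation \eqref{uurep} using this $(\btau,\bm\xi)$, evaluated in $\cbo$ and also inside $\Omega$.
By construction $(\vv,q)$ solves the homogeneous reformulated BVP \eqref{sto2}--\eqref{T2}, so by Prop.~\ref{p:stonull}, $\vv\equiv\mbf c$ for some constant $\mbf c\in\RR^2$ and $q$ is constant in $\cbo$.
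By continuity of the Stokes single-layer velocity and of the Stokes double-layer traction across $\pO$ \cite[Sec.~3.2]{Ladyzhenskaya}, the exterior Cauchy data $(\vv,\Tv(\vv,q))$ on $\pO$ equals $(\mbf c,\mbf 0)$, the traction vanishing because a constant flow has zero stress.
Since $(\vv,q)$ also solves Stokes' equations in $\Omega$, uniqueness of the interior Dirichlet problem forces $\vv\equiv\mbf c$ and $q$ constant in $\Omega$ as well, so the interior traction of $(\vv,q)$ on $\pO$ also vanishes.
Finally, the single-layer traction jump relation gives $\Tv^+-\Tv^- = -\btau$ on $\pO$ (the double-layer traction being continuous), but we have just shown both $\Tv^+$ and $\Tv^-$ vanish, so $\btau\equiv\mbf 0$.
\end{proof}
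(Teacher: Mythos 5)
Your proof has a genuine gap that the paper explicitly works around.

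In the Laplace version (Lemma~\ref{l:laptau}), the representation uses only the single-layer potential, whose \emph{value} is continuous across $\pO$; this is why the exterior boundary value (zero/constant) can be passed to the interior, after which a trivial argument gives $\tau=0$. But the Stokes representation here is the mixed $\mathcal{D}+\mathcal{S}$, and the double-layer velocity is \emph{not} continuous across $\pO$: it jumps by the density $\btau$. So after establishing $\vv\equiv\mbf{0}$ in $\cbo$ (note: not a general constant $\mbf{c}$; the no-slip condition in Prop.~\ref{p:stonull} forces the velocity nullspace to be trivial), the interior velocity limit is $\vv^-=\vv^+\mp\btau=\mp\btau$, which is \emph{not} known to be constant. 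Hence you cannot invoke interior Dirichlet uniqueness with data $\mbf{c}$; doing so presupposes $\btau$ is constant (indeed zero), which is exactly what needs proving. You yourself flag this obstacle in your preamble (``this is a fixed vector function on $\pO$ only if $\btau$ is constant, which we do not yet know''), but the final proof does not resolve it — it simply applies the interior Dirichlet uniqueness as though the jump were absent.

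There is a second, smaller error: you claim the exterior traction on $\pO$ is $\mbf{0}$ ``because a constant flow has zero stress.'' A constant velocity has zero \emph{rate-of-strain}, but the traction $\Tv=-q\nn+\mu(\nabla\vv+\nabla\vv\tr)\nn$ still carries the pressure term, so $\Tv^+=-c\nn$ with $c$ the (unknown) constant pressure. This nonzero exterior traction feeds into what the paper's proof actually does: for the $\mathcal{D}+\mathcal{S}$ representation the two jumps are equal and opposite, so the \emph{Robin combination} $\vv+\Tv$ \emph{is} continuous across $\pO$. The interior Robin data is therefore $\vv^-+\Tv^-=-c\nn$, and the paper then applies Green's first identity \eqref{G1I} in $\Omega$, uses incompressibility to kill the $c\int_\pO \nn\cdot\vv^-$ term, and obtains two nonpositive terms that must both vanish, forcing $\vv^-\equiv\mbf{0}$ and hence $\btau\equiv\mbf{0}$ via the double-layer jump. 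This is the Hebeker-style argument the paper cites, and it is precisely the extra mechanism needed to replace the continuity-of-the-potential step that is available for Laplace but not for mixed Stokes layers. Your proof is missing this mechanism.
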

\begin{proof}
The proof parallels that of Lemma~\ref{l:laptau}, but adapts the 2D
version of the proof of
\cite[Thm.~2.1]{hebeker} for the interior uniqueness step.
Let $\vv^+$ and $\vv^-$ denote
the exterior and interior limits respectively of $\vv$ on $\pO$,
and $\mbf{T}^+$ and $\mbf{T}^-$ be the same limits of $\mbf{T}(\vv,q)$.
If $(\btau,\bm\xi)$ is the difference between any two ELS solutions,
let $(\vv,q)$ be given by the representation \eqref{uurep}
both in $\cbo$ and inside $\Omega$.
Thus, by Prop.~\ref{p:stonull}, $\vv\equiv\mbf{0}$ and $q=c$, some constant,
and $\mbf{T}^+=-cn$.
The jump relations for ${\cal D}+{\cal S}$ then give
$\vv^- = \btau$ and $\mbf{T}^- = -cn - \btau$.
Thus in $\Omega$, $(\vv,q)$ is a Stokes solution with Robin (impedance)
data, $\vv^- + \mbf{T}^-= -cn$.
Applying \eqref{G1I} with $\uu=\vv$ and ${\cal K} = \Omega$ gives
$0 = -(\mu/2)\int_\Omega (\partial_i v_j+\partial_j v_i)^2
+ c\int_\pO n\cdot \vv^- - \int_\pO \|\vv^-\|^2$.
However by incompressibility $\int_\pO n\cdot \vv^- = 0$,
and the two remaining non-positive terms have the same sign,
so must both vanish.
Hence $\vv\equiv\mbf{0}$ in $\Omega$.
By the double-layer jump relation, $\btau = \vv^+-\vv^-$,
both of which have been shown to vanish, thus $\btau\equiv\mbf{0}$.
\end{proof}

The filling of the discrete $(2N+8m)$-by-$(2N+2M)$
ELS is now routine, being as in Sec.~\ref{s:lapbie},
apart from vector bookkeeping, and the following two details.
i) For the discretization of $D_{\pO,\pO}$ with kernel \eqref{D},
the diagonal limit at node $\xx_k$ is
$D_{ij}(\xx_k,\xx_k) = (-\kappa(\xx_k)/2\pi)t_i(\xx_k)t_j(\xx_k)$
where $(t_1,t_2)$ are the components of the unit tangent vector
on $\pO$.
ii) The kernel of $S_{\pO,\pO}$ has a logarithmic singularity on the diagonal,
so the plain Nystr\"om rule cannot be used.
However, there exist several high-order discretizations for such a kernel
\cite{LIE,alpert,hao};
when the $N$ per inclusion is not large
we prefer the spectral product quadrature due to Martensen, Kussmaul and Kress
\cite[Ch.~12.3]{LIE} \cite[Sec.~6.2]{hao}.

We provide an implementation of
the above kernel quadratures in \verb?StoSLP.m? and \verb?StoDLP.m?, and
of filling the ELS in \verb?fig_stoconvK1.m?;
see Remark~\ref{r:code}.

{\bf Example 4.}
We illustrate the above with results
obtained via a simple dense direct solve of the ELS, for the periodic ``worm''
geometry from Example 1, with no-slip boundary conditions,
pressure driving $\mbf{p}=(1,0)$, and $\mu=0.7$.
Fig.~\ref{f:sto}(a) shows the resulting flow $\uu$ and pressure $p$.
Here we evaluated $\uu$ using the close-evaluation
quadratures from \cite{lsc2d}, and $p$ using similar formulae,
to provide spectral accuracy even up to the curve.
As expected from Prop.~\ref{p:stonull}, the ELS exhibits unit
numerical nullity: the Stokes version of Fig.~\ref{f:ELS}(e) is very similar,
and we do not show it.
Fig.~\ref{f:sto}(c) shows the pointwise convergence in $\uu$,
and is consistent with exponential convergence
down to 13 digit accuracy.
For a known solution produced by stokeslets on the same grid as
in Sec.~\ref{s:lapnum}, convergence stops at around 12 digits.

Rapid convergence of the flux $J_1$ is also shown in Fig.~\ref{f:sto}(c),
reaching 13 digit accuracy at only $N=100$. Here, flux is evaluated
using the Stokes version of \eqref{J1trick}, which replaces
the representation ${\cal S}$ by ${\cal D} + {\cal S}$,
replaces $\partial/\partial n$ by $n \cdot$, and in the proof
replaces the zero-flux condition by incompressibility.

\subsection{Schur complement systems for Stokes}
\label{s:stosch}

Having filled the Stokes version of the ELS \eqref{ELS},
we would like to eliminate the periodizing unknowns $\bm\xi_j$ to leave
a $2N\times 2N$ system that can be solved iteratively.

\subsubsection{An equivalent square system preserving the nullspace}  
\label{s:stoequiv}
It is possible to build such a system that preserves the rank-3 nullity
using exactly the recipe in Section~\ref{s:equiv}.
For this we need only two ingredients: i) $H$ must be a stack of three
densities so that $CH$ enlarges the range of $\tilde Q$
to include all smooth vectors, and ii)
$R$ must be a stack of three proxy coefficient vectors with
full rank projection onto $\Nul Q$.

We first need the Stokes version of the technical lemma.
For Stokes there are {\em two} types of Gauss' law.
Let $\vv = {\cal S}_\pO\bm\sigma + {\cal D}_\pO\btau$ be a velocity
potential with arbitrary densities $\bm\sigma$ and $\btau$ on a curve $\pO$,
with associated pressure
$q = {\cal S}^\tbox{p}_\pO\bm\sigma + {\cal D}^\tbox{p}_\pO\btau$.
Then,
\be
\int_{\partial \cal K} \Tv(\vv,q) = \int_\pO \bm\sigma\,ds
\qquad\mbox{(net force)},
\quad\mbox{and}\qquad
\int_{\partial \cal K} \vv\cdot n = \int_\pO \btau \cdot n \,ds
\qquad\mbox{(fluid volume)},
\label{stoGau}
\ee
where $\partial \cal K$ is the boundary of some open domain $\cal K$ containing $\pO$.
The proofs of both are similar to that of \eqref{Gauss},
summing over the boundaries of $\Omega$ and of ${\cal K}\backslash\overline{\Omega}$
to get the left side integral.
For the first law one uses the single-layer traction jump relation,
and, for the second, the double-layer velocity jump relation
\cite[p.~57--58]{Ladyzhenskaya}.
\begin{rmk}
The second law of \eqref{stoGau} states that the Stokes double-layer
velocity potential does not generally conserve fluid volume,
a surprising fact that we did not find stated in standard literature.
\end{rmk}

Now, for $H$ we can choose densities that, when used with our $\cal D + S$
representation, generate net force in the 1-direction,
in the 2-direction, and break volume conservation, respectively:
\be
H = \left[\hhh{1}\; \hhh{2}\; \hhh{3}\right] = \left[\begin{array}{lll}
\mbf{1}_N & \mbf{0}_N & \{n^{\mbf{x}_i}_1\}_{i=1}^N \\
\mbf{0}_N & \mbf{1}_N & \{n^{\mbf{x}_i}_2\}_{i=1}^N
\end{array}\right]~,
\label{H}
\ee
where in the columns of the matrix
we use ``nodes fast, components slow'' ordering,
and recall that the normals live on the nodes of $\pO$.
Next we show that $CH$ has full-rank projection onto $\Ran W$.
\begin{lem}
Given $H$ in \eqref{H}, $W$ in \eqref{W}, and $C$ the Stokes discrepancy
matrix filled as in Sec.~\ref{s:stobie}, then
the $3\times 3$ matrix $W\tr C H$ is diagonal, and,
for all $N$ and $m$ sufficiently large, invertible.
\label{l:onesdens2}
\end{lem}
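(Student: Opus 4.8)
The plan is to prove this by computing the entries of the $3\times 3$ matrix $W\tr CH$ directly in terms of the underlying PDE quantities, using the two Gauss laws of \eqref{stoGau}, and showing each is a convergent quadrature approximation to an explicit nonzero constant on the diagonal and to zero off the diagonal. Recall that the columns $\hhh1,\hhh2,\hhh3$ of $H$ are densities, and the rows of $W\tr$ extract (discretizations of) $\int_L\mbf g_2 + \int_D\mbf g_4$ in each component (net force rows) and $\int_L n\cdot\mbf g_1 + \int_D n\cdot\mbf g_3$ (volume row), where here $\mbf g := C\hhh{l}$ is the discrepancy of the potential $\vv^{(l)} := ({\cal D}^\tbox{near}_\pO+{\cal S}^\tbox{near}_\pO)\hhh{l}$. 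So the $(k,l)$ entry of $W\tr CH$ is, up to quadrature error, a flux-type functional of $\vv^{(l)}$ evaluated on the unit cell walls.

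First I would treat the off-diagonal and diagonal structure together using the two conservation laws. For the first two rows of $W\tr$ (net force), the relevant continuous quantity applied to $\vv^{(l)}$ is $\int_{\pcb}\Tv(\vv^{(l)},q^{(l)})$, which by the first law in \eqref{stoGau} (and noting, exactly as in Lemma~\ref{l:onesdens}, that only the central copy among the nine in the ``near'' sum lies inside $\cb$) equals $\int_\pO \bm\sigma^{(l)}\,ds$, where $\bm\sigma^{(l)}$ is the single-layer part of $\hhh{l}$. From \eqref{H}, $\bm\sigma^{(1)}=(\mbf 1;\mbf 0)$, $\bm\sigma^{(2)}=(\mbf 0;\mbf 1)$, and $\bm\sigma^{(3)}$ is the normal field $n$; hence $\int_\pO\bm\sigma^{(1)}=(|\pO|,0)$, $\int_\pO\bm\sigma^{(2)}=(0,|\pO|)$, and $\int_\pO n\,ds=\mbf 0$ by the divergence theorem on $\Omega$. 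This gives the $2\times 3$ top block of $W\tr CH$ (in the limit $N,m\to\infty$) equal to $\mathrm{diag}(|\pO|,|\pO|)$ padded with a zero third column. For the third row (volume), the continuous quantity applied to $\vv^{(l)}$ is $\int_{\pcb}\vv^{(l)}\cdot n$, which by the second law in \eqref{stoGau} equals $\int_\pO\btau^{(l)}\cdot n\,ds$ where $\btau^{(l)}$ is the double-layer part of $\hhh{l}$. From \eqref{H}, $\btau^{(1)}=\bm\sigma^{(1)}$, $\btau^{(2)}=\bm\sigma^{(2)}$, $\btau^{(3)}=n$, so $\int_\pO\btau^{(1)}\cdot n=\int_\pO n_1=0$, $\int_\pO\btau^{(2)}\cdot n=\int_\pO n_2=0$, and $\int_\pO\btau^{(3)}\cdot n=\int_\pO\|n\|^2\,ds=|\pO|>0$. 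Thus the bottom row is $(0,0,|\pO|)$, and we conclude $W\tr CH \to |\pO|\,I_3$ as $N,m\to\infty$, which is diagonal and invertible.

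The remaining step is the discretization argument: each entry of $W\tr CH$ is by construction the wall-quadrature (with weights $\ww_L,\ww_D$) of the corresponding discrepancy component of $\vv^{(l)}$ sampled at the $m$ collocation nodes, and $C\hhh{l}$ is the $N$-node quadrature approximation to that discrepancy; since $\vv^{(l)}$ and all the relevant integrands (being smooth away from $\pO$, which is disjoint from the walls, exactly as in Sec.~\ref{s:lapbie}) are analytic on the walls, both quadratures converge spectrally, so $W\tr CH = |\pO|\,I_3 + o(1)$. Hence for all $N,m$ sufficiently large $W\tr CH$ is diagonal up to negligible error and, being a small perturbation of $|\pO|\,I_3$, invertible. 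The main obstacle I expect is bookkeeping: being careful that the ordering conventions in $W$ from \eqref{W}, in $H$ from \eqref{H}, and in the block structure of $C$ all line up so that the ``net force in component $k$ of $\vv^{(l)}$'' really is the $(k,l)$ entry — in particular checking the second row of $W$ in \eqref{W} pairs the right wall-pairs — and correctly tracking that only the central image contributes inside $\cb$ in each Gauss law, just as in Lemma~\ref{l:onesdens}.
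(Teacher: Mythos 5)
Your proposal is correct and follows essentially the same route as the paper's proof: apply the two Stokes Gauss laws \eqref{stoGau} (noting only the central copy of $\pO$ lies in $\cb$), evaluate the resulting curve integrals $\int_\pO \hhh{l}\,ds$ and $\int_\pO \hhh{l}\cdot n\,ds$ to get $|\pO|\,I_3$ in the limit, and argue spectral quadrature convergence. The paper's proof is terser (only column~1 is worked out, with ``similar steps apply'' for the rest), whereas you make all nine entries explicit, which is a welcome expansion rather than a different method; your only small infelicity is phrasing the argument in terms of a ``single-layer part'' and ``double-layer part'' of $\hhh{l}$, when in the compound representation \eqref{uurep} the \emph{same} density $\hhh{l}$ feeds both layers, so $\bm\sigma^{(l)} = \btau^{(l)} = \hhh{l}$ and the computation you do is exactly that.
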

\begin{proof}
The proof parallels that of Lemma~\ref{l:onesdens}.
For instance,
setting $\vv = ({\cal S}^\tbox{near}_\pO + {\cal D}^\tbox{near}_\pO)\hhh{1}$
and using \eqref{stoGau}
we see the discrepancy has net force $|\pO|$ in the 1-direction only,
and since $\int_\pO n = \mbf{0}$ the discrepancy is volume-conserving.
Discretizing shows that the first column of $W\tr C H$ converges
(in $m$ and $N$) to $[|\pO|\;0\;0]\tr$.
Similar steps apply to the other two columns, showing that the
matrix in fact converges to $|\pO|$ times the $3\times 3$ identity.
\end{proof}

For $R$ we choose the three analogous scaled
coefficient vectors on the proxy circle:
\be
R = \left[\rrr{1}\; \rrr{2}\; \rrr{3}\right] = \frac{1}{M}
\left[\begin{array}{lll}
\mbf{1}_M & \mbf{0}_M & \{\cos 2\pi j/M \}_{j=1}^M \\
\mbf{0}_M & \mbf{1}_M & \{\sin 2\pi j/M \}_{j=1}^M
\end{array}\right]~.
\label{R}
\ee
Reasoning as in the start of the proof of Theorem~\ref{t:gary},
the three (linearly independent) columns of $R$ are
exponentially close to generating
constant flow in the 1-direction, the 2-direction, and constant pressure,
respectively, which, according to
Prop.~\ref{p:stoemptynull}, fall into $\Nul Q$.

Given the above statements about $H$ and $R$,
Theorem~\ref{t:gary} holds
for the Stokes version of the procedure of Section~\ref{s:equiv};
in its proof one need only change $\ww$ to the $W$ of \eqref{W}.
Because of the unit nullity of the Stokes ELS,
this would result in a rank-1 deficient square system matrix. 

\subsubsection{A well-conditioned square Stokes system} 
\label{s:stowellcond}
To remove this nullspace, it is tempting to take the obvious Stokes
generalization of Section~\ref{s:wellcond}, leaving the $B$ block
unperturbed as in \eqref{pELS}. However, this fails (the solution has constant
but incorrect pressure drops) because
$W\tr \mbf{g} = [p_1;p_2;0] \neq \mbf{0}_3$.
However, since only one nullspace dimension needs be removed,
and this nullspace is common to the empty unit cell BVP, this suggests
a ``hybrid'' where the perturbation of $B$ involves {\em only the remaining two}
dimensions.
This indeed leads to a well-conditioned square system:
\ben
\item Set $H$ as in \eqref{H} and $R$ as in \eqref{R}, then set
$\tilde Q := Q + C H R\tr$ and $\hat B:= B + A P_{12}$, where
$P_{12} := [\hhh{1}\;\hhh{2}] [\rrr{1}\;\rrr{2}]\tr$
involves the two force (but not the one pressure) perturbations.
\item Solve for the vector $\hat\btau$ in the $2N\times 2N$ Schur
complement linear system
\be
\hat A_\tbox{per} \hat\btau := (A - \hat B \tilde Q^+ C)\hat\btau \; = \;
-\hat B \tilde Q^+ \mbf{g}~,
\label{pschur2}
\ee
where, as before, one solves the small systems
$\tilde Q X = C$ and $\tilde Q \mbf{y} = \mbf{g}$,
to build the large system matrix $\hat A_\tbox{per} = A - \hat B X$
and right-hand side
$-\hat B \mbf{y}$ for \eqref{pschur2},
which may then be solved iteratively. 
\item Recover the proxy coefficients via $\bm\xi = \mbf{y} - X \hat\btau$.
\item Recover the density via
$\bm\tau = \hat\btau + P_{12}\bm\xi$.
\een
\begin{thm} 
Let $\mbf{g}$ encode the pressure driving as in \eqref{stogg}.
Then for all $N$, $M$ and $m$ sufficiently large, the pair
$(\btau,\bm\xi)$ produced by the above procedure is unique,
and solves the Stokes version of the ELS \eqref{ELS}
with residual of order the quadrature error on boundaries.
\label{t:stoalex}\end{thm}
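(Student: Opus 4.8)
The plan is to follow the proof of Theorem~\ref{t:alex} almost verbatim, replacing the scalar Laplace consistency vector by the relevant column of the $3\times3$ Stokes matrix $W$ of \eqref{W}, and carefully tracking which of the three empty-cell consistency conditions of Prop.~\ref{p:stoemptynull} actually survives for a no-slip solution. First I would record that \eqref{pschur2} is precisely the Schur complement (eliminating $\bm\xi$) of the hybrid perturbed ELS
\[
\mt{A}{\hat B}{C}{\tilde Q}\vt{\hat\btau}{\bm\xi} \;=\; \vt{\mbf{0}}{\mbf{g}}
\]
with $\hat B = B + A P_{12}$ and $\tilde Q = Q + CHR\tr$, so that, once this Schur complement is shown to be well defined, any $\hat\btau$ solving \eqref{pschur2} gives, with $\bm\xi$ as in Step~3, a solution $(\hat\btau,\bm\xi)$ of the hybrid ELS. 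Well-definedness --- i.e.\ consistency of the small systems $\tilde Q X = C$ and $\tilde Q\mbf{y}=\mbf{g}$ for smooth data --- is exactly the content already asserted for Stokes just before Theorem~\ref{t:gary}: Lemma~\ref{l:onesmatrix} with $k=3$, using that $R$ of \eqref{R} has full-rank projection onto $\Nul Q$ (by Prop.~\ref{p:stoemptynull} and the construction of \eqref{R}) and that $W\tr CH$ is invertible (Lemma~\ref{l:onesdens2}), shows that $\Ran\tilde Q$ contains every smooth wall-data vector.

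Next I would remove the perturbation of $B$. Writing $HR\tr = P_{12} + P_3$ with $P_3 := \hhh{3}\rrr{3}\tr$, the hybrid ELS has lower-right block $(Q+CP_3)+CP_{12}$ and upper-right block $B + AP_{12}$, so Prop.~\ref{p:gary} applied with the matrix $P_{12}$ shows that the pair $(\btau,\bm\xi)$, with $\btau := \hat\btau + P_{12}\bm\xi$ as in Step~4, solves $\mt{A}{B}{C}{Q+CP_3}\vt{\btau}{\bm\xi}=\vt{\mbf{0}}{\mbf{g}}$, i.e.\ the original Stokes ELS with $Q$ replaced by $Q+CP_3$. Now build the Stokes representation \eqref{uurep} for $(\vv,q)$ from this $(\btau,\bm\xi)$; then $C\btau+Q\bm\xi$ approximates the discrepancy of $(\vv,q)$ sampled on the walls. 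The first block row forces $\vv=\mbf{0}$ on $\pO$, hence $\vv\cdot n=0$ there, which together with $\nabla\cdot\vv=0$ in $\cbo$ makes the net volume flux of $\vv$ through $\pcb$ vanish --- this is the \emph{third} (volume-conservation) consistency condition of Prop.~\ref{p:stoemptynull}, whose discretization reads $\mbf{w}_3\tr(C\btau+Q\bm\xi)\approx0$, where $\mbf{w}_3\in\RR^{8m}$ is the third column of $W$. The lower block row is $C\btau+Q\bm\xi+(\rrr{3}\tr\bm\xi)\,C\hhh{3}=\mbf{g}$; left-multiplying by $\mbf{w}_3\tr$ and using $\mbf{w}_3\tr C\hhh{3}\to|\pO|\neq0$ (the $(3,3)$ entry of the diagonal matrix $W\tr CH$ of Lemma~\ref{l:onesdens2}) together with $\mbf{w}_3\tr\mbf{g}=0$ (the third component of $W\tr\mbf{g}=[p_1;p_2;0]$, from \eqref{stogg}), I get $\rrr{3}\tr\bm\xi\approx0$ to within the boundary quadrature error. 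Hence $(\rrr{3}\tr\bm\xi)\,C\hhh{3}\approx\mbf{0}$, so $(\btau,\bm\xi)$ solves the original Stokes ELS \eqref{ELS} with residual of that order.

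Finally I would settle uniqueness. Because $(\btau,\bm\xi)$ (approximately) solves the original ELS, the quadrature form of Lemma~\ref{l:stotau} pins $\btau$ down uniquely; the only remaining freedom is the one-dimensional ELS nullspace, which by Lemma~\ref{l:stotau} and Prop.~\ref{p:stonull} lies purely in $\bm\xi$ and consists of the proxy coefficients generating a constant pressure in $\cb$. By the construction \eqref{R}, $\rrr{3}$ is (exponentially close to) exactly that coefficient vector, so the condition $\rrr{3}\tr\bm\xi\approx0$ just established removes this last dimension; thus $(\btau,\bm\xi)$, and hence $\hat\btau=\btau-P_{12}\bm\xi$ and the entire output of the procedure, is unique, and in particular \eqref{pschur2} has a unique solution. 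I expect the main obstacle here to be conceptual rather than computational: unlike the Laplace case, where the single consistency condition $\mbf{w}\tr(C\btau+Q\xi)=0$ is inherited directly from the Neumann data, here two of the three empty-cell consistency conditions (zero net force) genuinely \emph{fail} for a no-slip flow because of the drag on $\pO$, so the argument must be routed through the one condition (volume conservation) that survives, and it is precisely the diagonality of $W\tr CH$ in Lemma~\ref{l:onesdens2} that lets that single scalar equation decouple cleanly from the two force perturbations carried in $P_{12}$.
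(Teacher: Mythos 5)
Your argument is correct and follows essentially the same route as the paper's proof: you identify \eqref{pschur2} as the Schur complement of the hybrid perturbed ELS, undo the $P_{12}$ perturbation via Prop.~\ref{p:gary} to arrive at the rank-1 perturbed system \eqref{pELS2t}, use the no-slip condition and volume conservation to deduce $\rrr{3}\tr\bm\xi\approx0$, and then invoke Lemma~\ref{l:stotau} for uniqueness. The extra discussion of well-definedness of the Schur complement and of why the diagonality of $W\tr CH$ lets the volume-conservation condition decouple from the force perturbations is consistent with what the paper intends but leaves more implicit.
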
            

This is a Stokes analog of Theorem~\ref{t:alex}, and its proof structure
is similar.
\begin{proof}
\eqref{pschur2} is the Schur complement of the perturbed ELS
\be
\mt{A}{B+AP_{12}}{C}{Q+CHR\tr}\vt{\hat\btau}{\bm\xi} \; = \; \vt{0}{\mbf{g}}
~,
\label{pELS2}
\ee
so that if $\hat\btau$ solves \eqref{pschur2} then $(\hat\btau,\bm\xi)$,
with $\bm\xi$ as in Step 3, solves \eqref{pELS2}.
Applying Step 4, one may check that the pair $(\btau,\bm\xi)$ then solves
a rank-1 perturbed system similar to \eqref{pELS},
\be
\mt{A}{B}{C}{Q+C\hhh{3}\rrr{3}\tr}\vt{\btau}{\bm\xi} \; = \; \vt{0}{\mbf{g}}
~.
\label{pELS2t}
\ee
From this pair, define $\vv$ as the resulting quadrature approximation to the
velocity potential \eqref{uurep}, namely
$$
\vv(\xx) := 
\sum_{m,n\in\{-1,0,1\}} \sum_{i=1}^N w_i
[D(\xx,\xx_i+m\ex+n\ey) + G(\xx,\xx_i+m\ex+n\ey)] \btau_i
+ \sum_{j=1}^M \phi_j(\xx) \bm\xi_j~.
$$
Since it matches that of the ELS, the first row of \eqref{pELS2t} implies that
$\vv|_\pO=\mbf{0}$.
Thus $\int_\pO \vv \cdot n = 0$ and, by volume conservation in $\cbo$,
the discrepancy of $(\vv,q)$ obeys \eqref{gvol}, whose discretization
is $\www{3}\tr (C \btau + Q \bm\xi) = 0$, where $\www{3}$ is the third column
of \eqref{W}.
Subtracting this from the 2nd row of \eqref{pELS2t} 
left-multiplied by $\www{3}\tr$ leaves
$(\www{3}\tr C \hhh{3}) \rrr{3}\tr \bm\xi = \www{3}\tr \mbf{g}$.
Lemma~\ref{l:onesdens2} implies that $\www{3}\tr C \hhh{3}\neq0$,
and by computation $\www{3}\tr \mbf{g}=0$.
Thus $\rrr{3}\tr \bm\xi =0$, so the pair also solves the ELS.
Thus Lemma~\ref{l:stotau} holds, so $\btau$ is unique.
\end{proof}

We expect \eqref{pschur2} to be well-conditioned if $A$ is,
since the unit nullity has been removed be enforcing one extra condition
$\rrr{3}\tr \bm\xi =0$.  In contrast to Laplace, here the rank-3 nullity
of the empty BVP demands a hybrid scheme which ``segregates''
the other two conditions.

We test this scheme with Example 4 (Fig.~\ref{f:sto}).
Panel (c) includes (as dotted lines)
the convergence using the above recipe,
using GMRES with tolerance $10^{-14}$.
The convergence is almost identical to that of the direct ELS solution,
and the converged values differ by around $2\times 10^{-14}$.
The condition number of $\hat A_\tbox{per}$ is 82.7,
independent of $N$ and $M$,
and the number of GMRES iterations varied between 25 and 30.

\begin{table} 
\begin{tabular}{lllllll}      
$c$ (vol.\ frac.) & $N$ & \# iters.\ & CPU time & est.\ rel.\ err.\
& $D_\tbox{calc}$ & $D_\tbox{GK}$ \\
\hline 
0.05 & 60 & 5 & 0.06 s & $6\times 10^{-15}$ & 15.557822318902 & 15.5578 \\
0.1 & 60 & 7 &  0.03 s & $2\times 10^{-14}$ & 24.831675248849 & 24.8317 \\
0.2 & 60 & 8 & 0.03 s  & $3\times 10^{-14}$ & 51.526948961799 & 51.5269 \\
0.3 & 60 & 10 & 0.03 s & $6\times 10^{-14}$ & 102.88130367004 & 102.881 \\
0.4 & 60 & 11 & 0.04 s & $1\times 10^{-14}$ & 217.89431472214 & 217.894 \\
0.5 & 100 & 11 &0.05 s & $2\times 10^{-13}$ & $5.325481184629\times 10^2$ & $5.32548 \times 10^2$\\
0.6 & 150 & 15 & 0.06 s & $2\times 10^{-12}$ & $1.76357311252\times 10^3$ & $ 1.76357 \times 10^3$\\
0.7 & 350 & 21 & 0.2 s & $2\times 10^{-11}$ & $1.35191501296\times 10^4$ &  $1.35193 \times 10^4$\\
0.75 & 800 & 30 & 0.9 s & $5\times 10^{-11}$ & $1.2753159106\times 10^5$ & $1.27543 \times 10^5$\\
0.76 & 1200 & 32 & 1.9 s & $1\times 10^{-9}$ & $2.948165198\times 10^5$ & $2.94878 \times 10^5$\\
0.77 & 2400 & 39 & 6.5 s & $3\times 10^{-9}$ & $1.038269713\times 10^6$ & $1.03903 \times 10^6$\\
\hline
\end{tabular}
\vspace{0.5ex}
\ca{Computation of the 2D viscous Stokes drag
of an infinite square array of no-slip
discs of various volume fractions $c$ (Example 5).
The well-conditioned Schur system of \eqref{pschur2} was used.
$D_\tbox{calc}$ shows the dimensionless drag results.
Relative errors were estimated by convergence
(comparing against $N$ values up to 400 larger than shown, and increasing $M$).
The uncertainty is at the level of the last digit
quoted (all preceding digits are believed correct).
The last column $D_\tbox{GK}$ shows for comparison the numerical results
of \cite[Table~1, 5th column]{Krop04}.
}{t:discarray}
\end{table} 

\subsection{More challenging numerical tests for Stokes}
\label{s:stonum}

We now validate the above periodic Stokes scheme against a known test case,
and in geometries with large numbers of inclusions.

{\bf Example 5.}
The effective permeability of the no-slip flow around an infinite square
lattice of discs is a standard test case.
We computed this for the list of volume fractions $c$ tested
by Greengard--Kropinski \cite{Krop04}.
We used $N$ uniform quadrature points on the disc boundary,
chosen such that 
the separation between discs was always at least six times the
local quadrature node spacing $h$, so that the native Nystr\"om
quadrature was accurate;
with this choice
our $N$ values come out similar to those in \cite[Table~1]{Krop04}.
As $c$ approaches $\pi/4$ (the maximum possible volume fraction), the
discs come closer, the solution density becomes more peaked,
and the required $N$ diverges.
Only one pressure drop $\mbf{p} = (1,0)$ need be solved, since the
permeability tensor is a multiple of the identity.
The dimensionless drag is then related to the permeability $\kappa_{11}$ by
$D = 1/(\mu \kappa_{11})$.
We used GMRES with dense matrix-vector multiplication, and find
similar numbers of iterations as \cite{Krop04}.
The GMRES tolerance was set to $10^{-14}$, although sometimes the algorithm
stagnated at a relative residual one digit worse than this.
CPU times are quoted for a laptop with an i7-3720QM processor at 2.6 GHz,
running the code \verb?tbl_discarray_drag.m? (see Remark~\ref{r:code}).

Our results are given in Table~\ref{t:discarray}.
To validate the correctness of our scheme we also include
in the last column the numerical results from \cite[Table~1]{Krop04},
which were quoted to 6 digits, and in turn were validated against
other published work.
For $c\le 0.6$,
our results match the 6 digits quoted in \cite[Table~1]{Krop04},
but above this the match deteriorates,
reaching only 3 digits at the most challenging $c=0.77$.
By testing convergence 
we believe that we achieve 14 digits of accuracy
for the smaller $c$ values, dropping to around
9 digits at the larger ones.
For $c=0.77$, where the gap between discs is only $10^{-2}$
(i.e.\ $f_\tbox{clup} \approx 638$),
we observe relative fluctuation at the $10^{-9}$ level
(i.e.\ the last digit quoted in the last entry of the $D_\tbox{calc}$ column)
even when increasing $N$ to much larger values.
Further work is needed to ascertain whether this limit is
controlled by the underlying condition number of the BVP
(and hence impossible to improve upon in double-precision arithmetic),
or, conversely, if adaptive panel-based close-evaluation quadratures
\cite{helsingtut} could reduce the error.

{\bf Example 6.}
We solve the geometry of Example 2, with $K=100$ inclusions,
but with Stokes no-slip conditions, and the
well-conditioned iterative scheme of Sec.~\ref{s:stowellcond}.
We apply Stokes potentials using a few calls to the Laplace FMM,
as presented in \cite[Sec.~2.1]{lsc2d}.
Close-evaluation quadratures for $S$ and $D$ are
used, as in Example 4.
One period of the resulting
solution flow speed is plotted in Fig.~\ref{f:sto}(b).
Convergence is shown by Fig.~\ref{f:sto}(d); 11-12 digits
of accuracy in flux is achieved,
which is around 2 digits less than for Laplace.
At the converged value $N_k=200$, the solution takes
1720 seconds.

Linear complexity as a function of $K$ is demonstrated by
Fig.~\ref{f:GMRES}(c). The time per iteration is similar to that
for Laplace; this is because we pre-store the close-evaluation matrix blocks
whereas for Laplace we evaluated them on the fly.
We show the solution for $K=10^3$ in Fig.~\ref{f:stocandy},
using $N_k=350$.
This required 1289 GMRES iterations and took 21 hours of CPU time.
By comparison to the solution at $N_k=450$, we estimate that the
absolute flux error is $9\times 10^{-9}$.
However, due to the narrow channels in this porous medium,
the size of the flux is only around $10^{-5}$,
meaning that {\em relative} accuracy is only 3 digits.
Further study is needed to determine if it is the underlying condition number
of the flux problem that prevents more digits from being achieved.

{\bf Example 7.}
In the previous example large numbers of GMRES
iterations were needed, in contrast to the Laplace case (cf.\ Example 2).
This motivates studying the growth of iteration count with geometric
complexity. We use exactly the same circular geometries as in Example 3.
Fig.~\ref{f:GMRES}(d) shows that the numbers of iterations are
around ten times those for Laplace, and that they grow with $K$ in a manner
similar to $K^{1/2}$.


\section{Conclusions}
\label{s:conc}

We have presented a unified framework for the 2nd-kind integral equation
solution of large-scale
periodic Laplace and Stokes BVPs, that we believe will be useful
in related settings such as multiphase composites, photonic crystals, and
particulate flows.
The philosophy is quite simple, and we encourage readers to
try implementing it in their applications:
\bi
\item Use a free-space potential theory representation for
the near neighbor images, plus $\bigO(1)$ auxiliary ``proxy''
degrees of freedom to represent the rest of the infinite lattice.
\item Augment the linear system by applying {\em physical}
boundary conditions on a unit cell.
\item Use a low-rank perturbation of the empty BVP system
to remove the consistency condition,
allowing elimination of the auxiliary unknowns to give a
well-conditioned system 
compatible with fast algorithms.
\ei
This cures two ills in one fell swoop:
it removes zero eigenvalue(s) that arise in the physical BVP,
and circumvents the issue of non-existence of the
periodic Green's function for 
general densities. 

In Section~\ref{s:equiv} we provided a simple unified procedure for periodizing
Laplace and Stokes integral representations
when the physical BVP nullspace is numerically tolerable---%
we believe that this would be a useful starting point for related elliptic BVPs.
However, when a well-conditioned system is desired, we have presented
and analyzed procedures for Laplace
(Section~\ref{s:wellcond}) and Stokes
(Section~\ref{s:stowellcond}) systems,
where the latter exploits the fact that the physical BVP consistency
conditions form a {\em subset} of those of the empty BVP.
The relevant theorems (\ref{t:gary}, \ref{t:alex} and \ref{t:stoalex})
generalize simply to the case of any consistent
right hand side. 
We showcased the scheme with a high-accuracy computation of
the viscous drag of a square lattice of discs, 
and in
random composites with a large number of inclusions and several million
unknowns.

\begin{rmk}[Randomization]
In this work we have been explicit about choosing
low-rank matrices $R$ and $H$ proven to have the required
full-rank projections, in both Laplace and Stokes cases
(Lemmata~\ref{l:onesmatrix}, \ref{l:onesdens} and \ref{l:onesdens2}).
For other BVPs, this may not be easy or convenient.
However, in practice, there is much flexibility in their choice.
Following \cite{sifuentes2014randomized},
one may even pick random matrices for both $R$ and $H$,
exploiting the fact that with probability one they have 
the required full-rank projections.
In our experiments, the only penalty is a small loss of GMRES convergence rate, and a larger, fluctuating, condition number.
\label{r:random}\end{rmk}

We finish with directions for future work. 
Handling multiphase composites, as in electrostatics \cite{Moura94}
and elastostatics \cite{helsingelasto,cazeaux},
involves only the addition of an integral representation inside inclusions;
the periodization is unchanged.
Moreover, while we restricted our attention to static geometry BVPs, the scheme generalizes to moving geometry problems, such as the flow of bubbles, vesicles or bacteria, in a straightforward manner (e.g., see \cite{periodicp}).   
The scheme generalizes easily to 3D (requiring $M \sim 10^3$ auxiliary unknowns) \cite{garythesis}.
Note that in \eqref{urep}
one need not sum all $3\times 3$ copies of source quadrature nodes
when applying $A$, but just the ones falling inside
the proxy circle (or sphere) \cite{gumerov}.
Exploiting this can lower the overall constant, especially in 3D,
but requires extra bookkeeping in the $C$ matrix block.

By further augmenting the linear system to include decay/radiation conditions,
the scheme has already proven useful for cases when the {\em periodicity is
less than the space dimension}, such as singly-periodic in 2D
\cite{qpfds,periodicp,mlqp} or doubly-periodic in 3D \cite{acper}.
The latter case has applications in electrostatics \cite{lindbolap2per} and
Stokes flow \cite{lindbosto2per,carpets}.
The case of singly-periodic in 3D, and of non-periodic solutions in
periodic geometries, await development.
%
The generalization to skew and/or high aspect
unit cells has applications in shearing suspensions and in microfluidics.
This is easy in principle, with the proxy circle becoming an oval
in order to ``shield'' the far images, as in Fig.~\ref{f:geom}(c).
However, since $M$ must grow with the aspect ratio, this will require new ideas
beyond an aspect ratio of, say, $10^3$ in 2D.

More theoretical work is required, since our analysis
does not attempt precise bounds on how quadrature errors propagate to
solution errors for the scheme.
Work is also needed to understand whether it is
the underlying conditioning of the BVP,
or an issue with the scheme,
that limits the accuracy to
9 digits at the end of Table~\ref{t:discarray}, and to 4 digits for
Fig.~\ref{f:lapcandy}.
To reduce the large iteration counts for Stokes flows in
complicated close-to-touching geometries, incorporating adaptive schemes
such as that of Helsing \cite[Sec.~19]{helsingtut}, or the use of 
fast direct solvers or preconditioners, will be important.


\section{Acknowledgments}
Were are grateful for useful discussions with Bob Kohn, Leslie Greengard,
Jun Lai, Mark Tygert, and Manas Rachh.
The work of AB and LZ was supported in part by NSF grant
DMS-1216656. The work of LZ was mostly done while at Dartmouth College.
The work of GM and SV was supported in part by NSF grants DMS-1418964 and DMS-1454010.

\bibliographystyle{abbrv} 
\bibliography{alex}
\end{document}